\newcommand{\maincounter}{subsection}
\providecommand{\mathbold}{\bm}
\providecommand{\batchrelax}[1]{
\renewcommand{\do}[1]{\expandafter\let\csname##1\endcsname\relax}\docsvlist{#1}}
\DeclareSymbolFont{largesymbolsCM}{OMX}{cmex}{m}{n}
\DeclareMathSymbol{\prod}{\mathop}{largesymbolsCM}{"51} %prod
\DeclareMathSymbol{\coprod}{\mathop}{largesymbolsCM}{"60} %coprod
\let\sum\relax
\DeclareMathSymbol{\sum}{\mathop}{largesymbolsCM}{"50} %sum
\DeclareMathSymbol{\bigcupop}{\mathop}{largesymbolsCM}{"53} %bigcup
\DeclareMathSymbol{\bigcapop}{\mathop}{largesymbolsCM}{"54} %bigcap
\DeclareMathDelimiter{(}{\mathopen} {operators}{"28}{largesymbolsCM}{"00} %open parentheses
\DeclareMathDelimiter{)}{\mathclose}{operators}{"29}{largesymbolsCM}{"01} % close parentheses
\newcommand{\colorlinks}{%
\hypersetup{colorlinks=true,%
linkcolor=magenta,%
citecolor=purple,%
}%
}
\newtcbox{\hlT}[1][blue]{on line,
colback=black!20,colframe=black,
boxsep=1pt,left=2pt,right=2pt,top=1pt,bottom=1pt,enhanced jigsaw,
boxrule=0pt}
\newcommand{\bfit}[2][black]{\textit{\textbf{\textcolor{#1}{#2}}}}
\renewcommand{\emph}[1]{\bfit[blue]{#1}}
\crefname{enumi}{}{}
\providecommand{\batchrelax}[1]{
\renewcommand{\do}[1]{\expandafter\let\csname##1\endcsname\relax}\docsvlist{#1}}
\providecommand{\Batchrelax}[2][]{
\renewcommand{\do}[1]{\expandafter\let\csname#1##1\endcsname\relax}\docsvlist{#2}}
\newcommand{\define}[4]{\expandafter#1\csname#3#4\endcsname{#2{#4}}}
\newcommand{\definefromcommand}[4]{\expandafter#1\csname#3#4\endcsname{#2{\csname#4\endcsname}}} % uses etoolbox
\newcommand{\defineoptional}[4]{\expandafter#1\csname#3#4\endcsname[1][]{#2{#4}}} % uses etoolbox
\newcommand{\DefineCommand}[3]{%
\forcsvlist{\define{\newcommand}{#1}{#2}}{#3}
}
\newcommand{\DefineMathOperator}[3]{%
\forcsvlist{\define{\DeclareMathOperator}{#1}{#2}}{#3}
}
\newcommand{\DefineCommandFromCommand}[3]{%
\forcsvlist{\definefromcommand{\newcommand}{#1}{#2}}{#3}
}
\newcommand{\DefineOperatorFromCommand}[3]{%
\forcsvlist{\definefromcommand{\DeclareMathOperator}{#1}{#2}}{#3}
}
\newcommand{\DeclareMathOperatorStar}{\DeclareMathOperator*}
\newcommand{\DefineOperatorStarFromCommand}[3]{%
\forcsvlist{\definefromcommand{\DeclareMathOperatorStar}{#1}{#2}}{#3}
}
\newcommand{\DefineCommandOptional}[3]{%
\forcsvlist{\defineoptional{\newcommand}{#1}{#2}}{#3}
}
\newcommand{\colormath}[3]{\textcolor{#1}{\csname#2\endcsname{#3}}}
\newcommand{\ColorMath}[4][]{\forcsvlist{\DefineCommand{\colormath{#2}{#3}}{#1}}{#4}}
\newcommand{\mathfont}[2]{\csname#1\endcsname{#2}}
\newcommand{\MathFonts}[3][]{\forcsvlist{\DefineCommand{\mathfont{#2}}{#1}}{#3}}
\newcommand{\MathOperators}[3][]{\forcsvlist{\DefineMathOperator{\mathfont{#2}}{#1}}{#3}}
\newcommand{\MathSymbols}[3][]{\forcsvlist{\DefineCommandFromCommand{\mathfont{#2}}{#1}}{#3}}
\newcommand{\ModifyMathOperator}[3][]{\forcsvlist{\DefineOperatorFromCommand{\mathfont{#2}}{#1}}{#3}}
\newcommand{\ModifyMathOperatorStar}[3][]{\forcsvlist{\DefineOperatorStarFromCommand{\mathfont{#2}}{#1}}{#3}}
\newcommand{\MathOptionals}[3][]{\forcsvlist{\DefineCommandOptional{\csname#2\endcsname[##1]}{#1}}{#3}}
\DeclareListParser*{\semiforcsvlist}{;}
\newcommand{\defineB}[3]{\expandafter#1\csname#2\endcsname{#3}}
\newcommand{\ddefine}[4]{\expandafter#1\csname#3\endcsname{\csname#2\endcsname{#4}}}
\newcommand\newplaincommand[1]{\newplaincom#1\relax}
\def\newplaincom#1,#2\relax{%
\defineB{\newcommand}{#1}{#2}
}
\newcommand\newcommands[2][]{\newcom[#1]#2\relax}
\def\newcom[#1]#2,#3\relax{%
\ddefine{\newcommand}{#1}{#2}{#3}
}
\newcommand\newoperator[2][mathrm]{\newop[#1]#2\relax}
\def\newop[#1]#2,#3\relax{%
\ddefine{\DeclareMathOperator}{#1}{#2}{#3}
}
\newcommand{\DeclareMathOperators}[2][mathrm]{
\semiforcsvlist{\newoperator[#1]}{#2}}
\newcommand{\newtextcommands}[2][text]{
\semiforcsvlist{\newcommands[#1]}{#2}}
\newcommand{\newplaincommands}[1]{
\semiforcsvlist{\newplaincommand}{#1}}
\newcommand{\appendword}[1]{#1}
\newcommand{\appendwords}[1]{\forcsvlist{\appendword}{#1}}
\newcommand{\spaceword}[1]{\text{#1}~}
\newcommand{\spacewords}[2][~]{\forcsvlist{\spaceword}{#2}#1}
\newcommand{\maketextcommand}[4][]{\ddefine{\newcommand}{}{#1\appendwords{#4}}{#2\spacewords[#3]{#4}}}
\newcommand{\maketextcommands}[4][]{\semiforcsvlist{\maketextcommand[#1]{#2}{#3}}{#4}}
\newcommand{\definesentences}[1]{
\maketextcommands[t]{~}{~}{#1}
\maketextcommands[qt]{\quad}{~}{#1}
\maketextcommands[q]{\quad}{\quad}{#1}
}
\newcommand{\bigmathfrakinner}[2]{\scalebox{1.3}{\(\m@th#1\mathfrak{#2}\)}}
\newcommand{\StandardMathFonts}[1]{%
\MathFonts[bb]{mathbb}{#1}% blackboard
\MathFonts[bf]{mathbf}{#1}% bold
\MathFonts[bm]{mathbold}{#1}% bold italic
\MathFonts[c]{mathcal}{#1}% calligraphic
\MathFonts[fr]{mathfrak}{#1}% fraktur
\MathFonts[rm]{mathrm}{#1}% roman
\MathFonts[sf]{mathsf}{#1}% serif
\MathFonts[scr]{mathscr}{#1}% serif
\MathFonts[cat]{mathsf}{#1}% serif
\MathFonts[field]{mathbb}{#1} % fields bb
\MathFonts[functor]{mathscr}{#1} % Functors
}
\newcommand{\StandardMathLowerFonts}[1]{%
\MathFonts[bb]{mathbb}{#1}% blackboard
\MathFonts[bf]{mathbf}{#1}% bold
\MathFonts[bm]{mathbold}{#1}% bold italic
\MathFonts[fr]{mathfrak}{#1}% fraktur
\MathFonts[bigfr]{bigmathfrak}{#1}% big fraktur
\MathFonts[rm]{mathrm}{#1}% roman
\MathFonts[field]{mathbb}{#1} % fields bb
}
\newcommand{\StandardMathOps}[1]{%
% \MathOperators[bb]{mathbb}{#1}% blackboard
\MathOperators[bf]{mathbf}{#1}% bold
% \MathOperators[bm]{mathbold}{#1}% bold italic
% \MathOperators[c]{mathcal}{#1}% calligraphic
\MathOperators[fr]{mathfrak}{#1}% fraktur
\MathOperators[rm]{mathrm}{#1}% roman
% \MathOperators[sf]{mathsf}{#1}% serif
% \MathOperators[scr]{mathcal}{#1}% serif
}
\newcommand{\catstyle}[1]{\mathsf{#1}} % Default
\newcommand{\DefineCategories}[1]{%
\MathOptionals[p]{pcatstyle}{#1}
\MathOptionals[pdash]{pdashcatstyle}{#1}
\MathOptionals[psub]{psubcatstyle}{#1}
}
\newcommand{\qqtext}[1]{\quad\text{#1}\quad}
\newcommand{\qtext}[1]{\quad\text{#1}~}
\newcommand{\defineallwords}[1]{
\MathFonts[t]{ttext}{#1}
\MathFonts[q]{qqtext}{#1}
\MathFonts[qt]{qtext}{#1}
}
\newcommand{\Set}[1]{\left\{\,#1\,\right\}}
\newcommand{\set}[2][]{\ifblank{#1}{}{\csname#1\endcsname}\{\,#2\,\ifblank{#1}{}{\csname#1\endcsname}\}}
\newcommand{\parenset}[2][]{\csname#1\endcsname(\,#2\,\csname#1\endcsname)}
\newcommand{\angleset}[2][]{\csname#1\endcsname\<\,#2\,\csname#1\endcsname\>}
\newcommand{\st}{\,\middle|\,}
\newcommand{\tst}[1][]{\mathrel{\csname#1\endcsname\vert}}
\newcommand{\sqbrackets}[2][]{\csname#1\endcsname[#2\csname#1\endcsname]}
\newcommand{\Gm}[1][]{\bfG_{\frm\ifblank{#1}{}{,#1}}}
\newcommand{\Commutator}[3][]{\ifblank{#1}{}{\csname#1\endcsname}\lbrack#2\mathbin{,}#3\ifblank{#1}{}{\csname#1\endcsname}\rbrack}
\newcommand{\presup}[2]{\prescript{#1}{}{\mkern-2mu#2}}
\newcommand{\<}{\langle}
\renewcommand{\>}{\rangle}
\renewcommand{\epsilon}{\varepsilon}
\DeclareMathOperator{\defeq}{\coloneqq}
\DeclareRobustCommand{\into}{\hookrightarrow}
\newcommand{\qLeftrightarrow}{\quad\Leftrightarrow\quad}
\newcommand{\AlgClosure}[1]{\bar{#1}}
\newcommand{\SepClosure}[1]{{{#1}_{s}}}
\newlength{\arrow}
\newcommand{\WeilRes}[1][]{R\ifblank{#1}{}{_{#1}}}
\newcommand{\PCommutator}[3][]{{\color{blue}\ifblank{#1}{}{\csname#1\endcsname}(}#2\mathbin{\operatorcolorblue{,}}#3{\color{blue}\ifblank{#1}{}{\csname#1\endcsname})}}
\NewDocumentCommand{\partialderivative}{O{}m>{\SplitArgument{4}{,}} m}{\frac{\ifblank{#2}{\partial\ignorespaces}{\partial^{#2}\kern-0.2ex\ignorespaces}\ignorespaces#1}{\partialderivativeA#3}}
\NewDocumentCommand{\partialderivativeA}{mmmmm}{
\IfNoValueTF{#2}{}{\partial #1}
\IfNoValueTF{#2}{}{\partial #2}
\IfNoValueTF{#3}{}{\partial #3}
\IfNoValueTF{#4}{}{\partial #4}%
\IfNoValueTF{#5}{}{\partial #5}}
\NewDocumentCommand{\partialn}{O{}m>{\SplitArgument{4}{,}} m}{\frac{\ifblank{#2}{\partial}{\partial^{#2}}#1}{\tempB#3}}
\NewDocumentCommand{\tempB}{O{}mmmmm}{\partial #2 \IfNoValueTF{#3}{}{\cdots\partial #3}%
\IfNoValueTF{#4}{}{\partial #4}%
\IfNoValueTF{#5}{}{\partial #5}}
\newcommand{\hleq}[1]{\hlT{\bfseries(#1)}}
\providecommand{\absvalue}[2][]{\vert#2\vert\ifblank{#1}{}{_{#1}}}
\renewcommand{\sqcup}{\amalg}
\providecommand{\bm}{\mathbold}
\newcommand{\tprime}{\textquotesingle}
\DeclareMathSymbol{\lsb@l}{\mathalpha}{letters}{`l}
\newcommand{\definesame}[3]{\expandafter#1\csname#3\endcsname{#2}}
\newcommand{\NewMultipleCommands}[2]{%
\forcsvlist{\definesame{\newcommand}{#1}}{#2}
}
\newtheoremstyle{standard}%
  {5pt}{5pt}% space above, below
  {} % body font
  {0pt}{\bfseries}{.}{1em}% Head indent, font, punct, spacing
  {\thmname{#1}\thmnumber{~#2}\thmnote{~(#3)}\theorembookmark{level=\bookmarktheoremlevel}}% custom head
\newtheoremstyle{(swap)}%
  {5pt}{5pt}% space above, below
  {} % body font
  {0pt}{\bfseries}{.}{1em}% Head indent, font, punct, spacing
  {\thmnumber{(#2)\hspace{0.5em}}\thmname{#1}\thmnote{ (#3)}}% custom head
\newtheoremstyle{(hlswap)}%
  {5pt}{5pt}% space above, below
  {} % body font
  {0pt}{\bfseries}{.}{1em}% Head indent, font, punct, spacing
  {\thmnumber{\hlT{(#2)}\hspace{0.5em}}\thmname{#1}\thmnote{ (#3)}\theoremswapbookmark{level=\bookmarktheoremlevel}}% custom head
\newtheoremstyle{(empty)}%
  {5pt}{5pt}% space above, below
  {} % body font
  {0pt}{\bfseries}{}{1em}% Head indent, font, punct, spacing
  {\thmnumber{(#2)}\thmname{\hspace{0.5em}#1}\thmnote{\thmnumber{\hspace{0.5em}}#3.}}% custom head
\newtheoremstyle{subsection}%
  {5pt}{5pt}% space above, below
  {} % body font
  {18pt}{\bfseries}{.}{0.5em}% Head indent, font, punct, spacing
  {\thmnumber{#2.}\thmname{\hspace{0.5em}#1}\thmnote{\hspace{0.5em}(#3)}\emptybookmark{level=\bookmarktheoremlevel}}% custom head
\newtheoremstyle{subsectionempty}%
  {5pt}{5pt}% space above, below
  {} % body font
  {18pt}{\bfseries}{.}{0.5em}% Head indent, font, punct, spacing
  {\thmnumber{#2.}\thmname{\hspace{0.5em}#1}\thmnote{\hspace{0.5em}#3}\emptybookmark{level=\bookmarktheoremlevel}}% custom head
\newtheoremstyle{(hlempty)}%
  {5pt}{5pt}% space above, below
  {} % body font
  {0pt}{\bfseries}{}{1em}% Head indent, font, punct, spacing
  {\thmnumber{\hlT{(#2)}}\thmname{\hspace{0.5em}#1}\thmnote{\thmnumber{\hspace{0.5em}}#3.}\emptybookmark{level=\bookmarktheoremlevel}}% custom head
\theoremstyle{(hlempty)}
\providecommand{\maincounter}{subsection}
\theoremstyle{subsectionempty}
\newtheorem{env}[\maincounter]{}
\theoremstyle{subsection}
\newtheorem{theorem}[\maincounter]{Theorem}
\newtheorem{lemma}[\maincounter]{Lemma}
\newtheorem{proposition}[\maincounter]{Proposition}
\newtheorem{corollary}[\maincounter]{Corollary}
\newtheorem{conjecture}[\maincounter]{Conjecture}
\newtheorem{remark}[\maincounter]{Remark}
\theoremstyle{standard}
\theoremstyle{standard}
\newtheorem*{theorem*}{Theorem}
\newtheorem*{lemma*}{Lemma}
\newtheorem*{proposition*}{Proposition}
\newtheorem*{corollary*}{Corollary}
\newtheorem*{definition*}{Definition}
\newtheorem*{definitions*}{Definitions}
\newtheorem*{remark*}{Remark}
\newtheorem*{remarks*}{Remarks}
\newtheorem*{example*}{Example}
\newtheorem*{examples*}{Examples}
\newtheorem*{conjecture*}{Conjecture}
\newtheorem*{exercise*}{Exercise}
\newtheorem*{exercises*}{Exercises}
\newtheorem*{problem*}{Problem}
\theoremstyle{(hlempty)}
\theoremstyle{(empty)}
\newtheorem{starenv}{}
\newdimen\zeroparindent
\def\hlsection{\@starthlsection{section}{1}%
  \z@{.7\linespacing\@plus\linespacing}{.5\linespacing}%
  {\normalfont\bfseries\centering}}
\def\hlsubsection{\@starthlsection{subsection}{2}%
  \zeroparindent{.5\linespacing\@plus.7\linespacing}{-.5em}%
  {\normalfont\bfseries}}
\def\hlsection{\@starthlsection{section}{2}%
  \zeroparindent{.5\linespacing\@plus.7\linespacing}{-.5em}%
  {\normalfont\bfseries}}
\def\@hlseccntformat#1{%
  \hlT{\,\protect\textup{\protect\@secnumfont
    \S\csname the#1\endcsname}\,}
    \protect\@secnumnopunct
}
\def\@parenseccntformat#1{%
  \hlT{\,\protect\textup{\protect\@secnumfont
    (\csname the#1\endcsname})\,}
    \protect\@secnumnopunct
}
\def\@hlsect#1#2#3#4#5#6[#7]#8{%
  \edef\@toclevel{\ifnum#2=\@m 0\else\number#2\fi}%
  \ifnum #2>\c@secnumdepth \let\@secnumber\@empty
  \else \@xp\let\@xp\@secnumber\csname the#1\endcsname\fi
  \@tempskipa #5\relax
  \ifnum #2>\c@secnumdepth
    \let\@svsec\@empty
  \else
    \refstepcounter{#1}%
    \edef\@secnumnopunct{%
      \ifdim\@tempskipa>\z@ % not a run-in section heading
        \@ifnotempty{#8}{\@nx\enspace}%
      \else
        \@ifempty{#8}{}{\@nx\enspace}%
      \fi
    }%
    \protected@edef\@svsec{%
      \ifnum#2<\@m
        \@ifundefined{#1name}{}{%
          \ignorespaces\csname #1name\endcsname\space
        }%
      \fi
      \@parenseccntformat{#1}%
    }%
  \fi
  \ifdim \@tempskipa>\z@ % then this is not a run-in section heading
    \begingroup #6\relax
    \@hangfrom{\hskip #3\relax\@svsec}{\interlinepenalty\@M #8\par}%
    \endgroup
    \ifnum#2>\@m \else \@tocwrite{#1}{#8}\fi
  \else
  \def\@svsechd{#6\hskip #3\@svsec
    \@ifnotempty{#8}{\ignorespaces#8\unskip
       \@addpunct.}%
    \ifnum#2>\@m \else \@tocwrite{#1}{#8}\fi
  }%
  \fi
  \global\@nobreaktrue
  \@xsect{#5}}
\def\@starthlsection#1#2#3#4#5#6{%
 \if@noskipsec \leavevmode \fi
 \par \@tempskipa #4\relax
 \@afterindenttrue
 \ifdim \@tempskipa <\z@ \@tempskipa -\@tempskipa \@afterindentfalse\fi
 \if@nobreak \everypar{}\else
     \addpenalty\@secpenalty\addvspace\@tempskipa\fi
 \@ifstar{\@dblarg{\@hlsect{#1}{\@m}{#3}{#4}{#5}{#6}}}%
         {\@dblarg{\@hlsect{#1}{#2}{#3}{#4}{#5}{#6}}}%
}
\newcommand\hlsectionformat[1]{\@hlsectionformat#1\relax}
\def\@hlsectionformat#1\relax{\hlT{#1}}
\providecommand{\bookmarktablelevel}{\bookmarktheoremlevel}% default to section if not defined
\providecommand{\bookmarkfigurelevel}{\bookmarktheoremlevel}% default to section if not defined
\providecommand{\bookmarktheoremlevel}{subsection}% default to subsection if not defined
\pretocmd\endtable{%
  \bookmark[
    %rellevel=1,
    %keeplevel,
	level=\bookmarktablelevel,
    dest=\@currentHref,
  ]{Table \thetable: \@currentlabelname}%
}{}{\errmessage{Patching \noexpand\endtable failed}}
\pretocmd\endfigure{%
  \bookmark[
    %rellevel=1,
    %keeplevel,
	level=\bookmarkfigurelevel,
    dest=\@currentHref,
  ]{Figure \thefigure: \@currentlabelname}%
}{}{\errmessage{Patching \noexpand\endfigure failed}}
\newcommand*{\emptybookmark}[1]{%
  \bookmark[
    dest=\@currentHref,
	#1
  ]{%
  \csname the\thmt@envname\endcsname. \thmt@thmname\space
    \ifx\@currentlabelname\@empty
    \else
      \space\@currentlabelname%
    \fi
  }%
}
\newcommand*{\emptybookmarkstar}[1]{%
  \bookmark[
    dest=\@currentHref,
	#1
  ]{
  \@currentlabelname%
  }%
}
\newcommand*{\theorembookmark}[1]{%
  \bookmark[dest=\@currentHref,#1]{%
  \thmt@thmname\space\csname the\thmt@envname\endcsname
    \ifx\@currentlabelname\@empty
    \else
      \space(\@currentlabelname)%
    \fi
  }%
}
\newcommand*{\theoremswapbookmark}[1]{%
  \bookmark[
    dest=\@currentHref,
	#1
  ]{%
  (\csname the\thmt@envname\endcsname) \thmt@thmname\space
    \ifx\@currentlabelname\@empty
    \else
      \space(\@currentlabelname)%
    \fi
  }%
}
\newcommand*{\theoremswapbookmarkstar}[1]{%
  \bookmark[
    dest=\@currentHref,
	#1
  ]{%
  \thmt@thmname\space
    \ifx\@currentlabelname\@empty
    \else
      \space(\@currentlabelname)%
    \fi
  }%
}
\newcommand{\symbolitem}[1]{\global\itemsymboltrue\item}
\newcommand{\perhapssymbol}[1]{%
  \ifitemsymbol#1\global\itemsymbolfalse\fi
}
\newif\ifitemsymbol
\newcommand{\defaultsymbol}{\tprime}
\setlist[enumerate]{bf}
\newcommand{\sqIndex}[3][]{\csname#1\endcsname[#2:#3\csname#1\endcsname]}
\DeclareMathOperator{\Mahler}{M}
\newcommand{\mahler}{\underline{m}}
\newcommand{\Weilheight}{\underline{h}}
\theoremstyle{standard}
\newtheorem{theoremA}{Theorem}
\crefname{theoremA}{Theorem}{Theorems}
\title{Bottom of the Length Spectrum of Arithmetic Orbifolds}
\author{Mikołaj Frączyk}
\address{Department of Mathematics \\ University of Chicago \\ Chicago, IL 60637 \\ USA}
\email{mfraczyk@math.uchicago.edu}
\author{Lam L. Pham}
\address{Department of Mathematics, Brandeis University, 415 South Street, Waltham, MA 02453, USA \& The Einstein Institute of Mathematics, Edmond J. Safra Campus, Givat Ram, The Hebrew University of Jerusalem Jerusalem, 91904, Israel}
\email{lampham@brandeis.edu}
\thanks{L. P. acknowledges the support of the Zuckerman STEM Leadership Program (as Zuckerman Postdoctoral Scholar)}
\newcommand{\AC}{\AlgClosure}
\newcommand{\AGal}[1]{\Gal(\AC{#1}/#1)}
\DeclareMathOperator{\elliptic}{(e)}
\DeclareMathOperator{\hyp}{(h)}
\renewcommand{\hyp}{\text{ntor}}
\renewcommand{\elliptic}{\text{tor}}
\begin{document}

\begin{abstract}
We prove that cocompact arithmetic lattices in a simple Lie group are uniformly discrete if and only if the Salem numbers are uniformly bounded away from \(1\). We also prove an analogous result for semisimple Lie groups. Finally, we shed some light on the structure of the bottom of the length spectrum of an arithmetic orbifold \(\Gamma\backslash X\) by showing the existence of a positive constant \(\delta(X)>0\) such that squares of lengths of closed geodesics shorter than \(\delta\) must be pairwise linearly dependent over \(\Q\).
\end{abstract}

\maketitle
\tableofcontents

\section{Introduction and main results}

\setcounter{subsection}{-1}

Let \(G\) be a semisimple Lie group, \(X\) its associated symmetric space. We study orbifolds \(\Gamma\backslash X\) where \(\Gamma\subset G\) is a cocompact arithmetic lattice. We investigate the uniform discreteness of cocompact arithmetic lattices in a given semisimple Lie group \(G\). This was first conjectured by Margulis for higher rank semisimple Lie groups \cite[Chapter IX, \S 4.21 (B)]{Margulis1991}:

\begin{conjecture}
\label{Margulis-conjecture}
Let \(G\) be a connected semisimple \(\R\)-group. Suppose \(\rk_{\R}(G)\geq 2\). Then, there exists a neighborhood \(U\subset G(\R)\) of the identity such that for any irreducible cocompact lattice \(\Gamma \subset G(\R)\), the intersection \(U\cap\Gamma\) consists of elements of finite order.
\end{conjecture}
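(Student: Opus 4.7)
The plan is to exploit Margulis's arithmeticity theorem, whose hypotheses are exactly $\rk_\R G \geq 2$ and irreducibility. I would replace $\Gamma$ by $\mathbf{H}(\mathscr{O}_k)$ for an almost $k$-simple algebraic group $\mathbf{H}$ over a number field $k$, where $\mathbf{H}^{\sigma_0}(\R)$ is isogenous to $G(\R)$ at one Archimedean place $\sigma_0$ and $\mathbf{H}^{\sigma}(\R)$ is compact at every other Archimedean place of $k$. Cocompactness forces $\mathbf{H}$ to be $k$-anisotropic, which rules out unipotent elements, so every $\gamma \in \Gamma$ is semisimple and contained in some maximal $k$-anisotropic torus $\mathbf{T} \subset \mathbf{H}$. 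It then suffices to produce a neighborhood $U$ of $e$ in $G(\R)$, independent of $k$, $\mathbf{H}$, and $\mathbf{T}$, such that $\mathbf{T}(\mathscr{O}_k) \cap U$ consists only of torsion elements.

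I would then translate this into a height statement. For any character $\chi \in X^{*}(\mathbf{T})$ defined over a splitting field $L/k$, the value $\chi(\gamma)$ is an algebraic unit in $L^{\times}$ whose absolute value equals $1$ at every non-Archimedean place of $L$ (by integrality) and at every Archimedean place lying over $\sigma \neq \sigma_0$ (by compactness of $\mathbf{H}^{\sigma}$), while at the places of $L$ over $\sigma_0$ the absolute value is within $O(\varepsilon)$ of $1$ when $\gamma \in U$ has diameter $\varepsilon$. Uniform discreteness of $\Gamma$ is thus equivalent to a uniform positive lower bound, independent of $L$ and $[L:\Q]$, on the Mahler measure of any non-root-of-unity algebraic unit whose Galois conjugates away from one distinguished Archimedean place all lie on the unit circle: essentially a uniform lower bound on Salem-type numbers.

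To push past this barrier I would leverage $\rk_\R G \geq 2$ structurally. The Zassenhaus--Kazhdan--Margulis lemma shows that $\langle \Gamma \cap U\rangle$ is virtually nilpotent, and in the absence of unipotents it is virtually abelian, so up to finite index it lies inside a single $\mathbf{T}(\mathscr{O}_k)$. Higher rank lets one arrange $\rk_\R \mathbf{T}^{\sigma_0} \geq 2$, and by Dirichlet's unit theorem for tori the logarithmic image of $\mathbf{T}(\mathscr{O}_k)$ is then a full lattice of rank $r \geq 2$, every short vector of which arises from a genuinely near-identity element of $\Gamma$. Minkowski's second theorem bounds the product of the successive minima of this lattice from above by the regulator of $\mathbf{T}$ times a dimensional constant; the plan is to combine Dobrowolski-type height bounds with the Weyl-group symmetry of $X^{*}(\mathbf{T})$ to establish a uniform absolute lower bound on the regulator, contradicting the existence of simultaneously small successive minima and forcing $\Gamma \cap U$ to be torsion.

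The main obstacle is that a degree-independent lower bound on the regulator of an arbitrary $k$-anisotropic torus is not known unconditionally, and in the rank-one case it subsumes Lehmer's conjecture. The genuinely hard (and hopeful) part is whether the rank $r\geq 2$ hypothesis lets one replace a single Salem lower bound with a product of $r$ small but multiplicatively independent heights, to which equidistribution or Amoroso--David-type estimates may apply; combined with the congruence subgroup property of higher-rank arithmetic lattices, this is where I would concentrate the effort. Absent such an argument, this plan collapses to an equivalence between the conjecture and a Lehmer-type problem rather than an unconditional proof of the conjecture itself.
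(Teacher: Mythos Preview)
The statement is an open conjecture; the paper does not prove it. Its main theorems (A and B) establish precisely the reduction you anticipate in your final sentence: Margulis' conjecture for $G$ is \emph{equivalent} to the Salem conjecture (simple $G$) or implied by Lehmer conjectures of suitable signature (semisimple $G$), not proved unconditionally. So your self-diagnosis that the plan ``collapses to an equivalence between the conjecture and a Lehmer-type problem'' is exactly right and matches the paper's conclusion.

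Your attempt to squeeze more out of the higher-rank hypothesis has a concrete gap. You write that ``higher rank lets one arrange $\rk_\R \mathbf{T}^{\sigma_0} \geq 2$'', but $\mathbf{T}$ is not yours to arrange: it is determined by $\gamma$, and even when $\rk_\R G\geq 2$ the Zariski closure of $\langle\gamma\rangle$ may be a rank-one torus. In that case all eigenvalues of $\gamma$ are multiplicatively dependent on a single Salem-type unit, and no Minkowski/regulator argument on the ambient unit lattice separates this from the genuine rank-$\geq 2$ situation. The paper's Theorem~D makes this obstruction sharp: if the torus \emph{generated by} $\gamma$ has rank $\geq 2$, Amoroso--David already gives an unconditional lower bound on $\ell(\gamma)$; the entire remaining difficulty is the rank-one case, which is the Salem conjecture.

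The paper's route to the equivalence also differs from your regulator/successive-minima picture. Rather than studying the whole unit lattice of $\mathbf{T}$, it applies Amoroso--David to pairs $\chi(\gamma),\chi'(\gamma)$ to force multiplicative dependence whenever $\gamma$ is short, packages these dependences into a $\{\pm1\}$-valued cocycle for $\Gal(\overline{k}/k)$ acting on the eigenvalues, and uses the cocycle to produce an explicit Salem number $\tilde\alpha$ (a short Galois-symmetrized product of eigenvalues) with Mahler measure controlled by that of $\gamma$. This buys a clean element-by-element construction and avoids regulator lower bounds entirely, at the cost of yielding only the conditional equivalence rather than the conjecture itself.
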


Margulis' Arithmeticity Theorem \cite{Margulis1975} implies that irreducible lattices in higher-rank semisimple Lie groups are arithmetic. Although in rank one, non-arithmetic lattices do exist, restricting to \emph{arithmetic lattices}, we expect the same statement to hold. Therefore, \cref{Margulis-conjecture} may be extended to rank 1 groups by adding the assumption of arithmeticity.

\begin{conjecture}[Margulis' conjecture]
\label{Margulis-conjecture'}
Let \(G\) be a connected semisimple \(\R\)-group. Then, there exists a neighborhood \(U\subset G(\R)\) of the identity such that for any irreducible \emph{arithmetic} cocompact lattice \(\Gamma \subset G(\R)\), the intersection \(U\cap\Gamma\) consists of elements of finite order.
\end{conjecture}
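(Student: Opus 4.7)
The plan is to exploit the arithmetic structure of $\Gamma$ and reduce the conjecture to a statement about Mahler measures of algebraic integers. After passing to a finite-index subgroup and invoking Margulis' arithmeticity together with the restriction-of-scalars description of irreducible arithmetic lattices, I may assume $\Gamma \subset H(K)$ for an absolutely almost simple $K$-group $H$, where $G(\R)$ is isomorphic to $H(K_{v_0})$ up to a compact factor and $H(K_v)$ is compact at every other archimedean place $v$. Cocompactness of $\Gamma$ translates into $H$ being $K$-anisotropic. Fix once and for all a faithful $K$-representation $\rho\colon H \into \GL_n$; then for any $\gamma \in \Gamma$, the matrix $\rho(\gamma)$ lies in $\GL_n(\scrO_K[S^{-1}])$ for a fixed finite set $S$ of places.

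First I would translate the assumption ``$\gamma$ close to the identity in $G(\R)$'' into the arithmetic statement that every eigenvalue of $\rho(\gamma)$ has $v_0$-absolute value close to $1$. Combined with compactness at the remaining archimedean places, this forces every Galois conjugate of every eigenvalue $\alpha$ of $\rho(\gamma)$ to have archimedean absolute value bounded by a uniform constant, and tending uniformly to $1$ as $\gamma\to 1$ at $v_0$. Denominators at finite places of $S$ are controlled by $\rho$ alone, so after clearing by a bounded integer the $\alpha$'s become algebraic integers of bounded degree whose Mahler measures $\Mahler(\alpha)$ tend to $1$ uniformly as $\gamma \to 1$.

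Second, by Kronecker's theorem any algebraic integer all of whose Galois conjugates lie on the unit circle is a root of unity; so if $\gamma$ is non-torsion then some eigenvalue $\alpha$ of $\rho(\gamma)$ is not a root of unity, and the analysis of the previous paragraph realises $\alpha$ as a Salem-type algebraic integer of Mahler measure strictly bigger than $1$ yet arbitrarily close to $1$. Quantifying this reduction carefully — keeping track of the dependence of degrees, denominators, and compact-factor bounds on the arithmetic data of $\Gamma$ — is essentially the content of the main equivalence established in this paper.

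The final and decisive step is to derive a contradiction by invoking a uniform lower bound $\Mahler(\alpha) \geq c > 1$, absolute over all algebraic integers $\alpha$ that are not roots of unity. This is \emph{Lehmer's conjecture}, one of the oldest and most famous open problems in Diophantine approximation. The main obstacle is therefore not technical but substantive: any proof of \cref{Margulis-conjecture'} along this arithmetic route must either resolve Lehmer's conjecture or identify additional algebraic constraints on the polynomials arising from arithmetic lattices that place them strictly inside the conjectural Lehmer bound. A genuinely new, non-arithmetic approach — a geometric-rigidity or ergodic-theoretic argument specific to $G(\R)$ — would sidestep this obstruction, but at present no such route is known, and this is where I expect the plan to halt.
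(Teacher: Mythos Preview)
The statement you are attempting to prove is an \emph{open conjecture}; the paper does not prove it. What the paper establishes are the conditional results \cref{theorem:A} and \cref{theorem:B}, namely that \cref{Margulis-conjecture'} is \emph{equivalent} to the Salem conjecture (for simple \(G\)) or to a family of Lehmer-type conjectures of restricted signature (for semisimple \(G\)). Your final paragraph correctly anticipates that the argument halts at an open Diophantine problem, so in that sense your assessment of the situation is honest. However, the sketch contains a substantive error and misses the actual content of the paper.

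The error is the claim that the eigenvalues \(\alpha\) of \(\rho(\gamma)\) are algebraic integers ``of bounded degree''. They are not: \(\alpha\in\fieldk_{\gamma}\), and \([\fieldk_{\gamma}:\Q]\) is comparable to \([\fieldk:\Q]\), which is \emph{unbounded} as \(\Gamma\) ranges over all arithmetic lattices in \(G\). If the degree were genuinely bounded, Northcott's theorem would give a uniform height gap immediately and the conjecture would be trivial. The entire difficulty lies in controlling what happens as \([\fieldk:\Q]\to\infty\). Relatedly, calling \(\alpha\) ``Salem-type'' is imprecise: what one gets directly is only that the number \(s(P_\alpha)\) of conjugates off the unit circle is bounded in terms of \(\dim G\), which is the hypothesis of the \emph{weak} Lehmer conjecture (\cref{weak-Lehmer-conjecture}), not the Salem conjecture. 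The reduction you outline is essentially Margulis' original observation that weak Lehmer implies \cref{Margulis-conjecture'}.

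The paper's novelty is precisely the ``additional algebraic constraints'' you allude to but do not identify. Using the Amoroso--David height bound (\cref{theorem:Amoroso-David}), the authors show that when \(\mahler(\gamma)\) is small and \([\fieldk:\Q]\) is large, all non-torsion eigenvalues \(\chi(\gamma)\) must be pairwise multiplicatively dependent (\cref{proposition:3.1}). They then build a \(\{\pm1\}\)-valued Galois cocycle out of these dependencies and use it to manufacture, from the eigenvalues, an explicit algebraic unit \(\tilde\alpha\) whose signature is controlled by the signature \((r_1,r_2)\) of \(G\). For simple \(G\) this \(\tilde\alpha\) is a genuine Salem number of small Mahler measure, which is why only the Salem conjecture---rather than full or weak Lehmer---is needed in \cref{theorem:A}.
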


Henceforth, this conjecture will be referred to as \emph{Margulis' Conjecture}. In \(\SL_{2}(\R)\), the conjecture takes the following form:

\begin{conjecture}[Short Geodesic Conjecture]
\label{Sury-conjecture}
There exists a neighborhood \(U\subset \SL_{2}(\R)\) of the identity such that for any arithmetic torsion-free cocompact lattice \(\Gamma\subset \SL_{2}(\R)\), we have \(\Gamma\cap U=\{e\}\).
\end{conjecture}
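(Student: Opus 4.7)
My plan is to translate the Short Geodesic Conjecture into a statement about Salem numbers, and thereby reduce it to the Salem case of a classical open problem. Every torsion-free cocompact arithmetic lattice $\Gamma \subset \SL_2(\R)$ is commensurable (up to conjugation) with the norm-one group $\mathcal{O}^1$ of an order $\mathcal{O}$ in a quaternion division algebra $B$ over a totally real number field $k$, where $B$ splits at exactly one real place of $k$ and ramifies at all others. A hyperbolic element $\gamma \in \Gamma$ has translation length satisfying $2\cosh(\ell(\gamma)/2) = |\tr(\gamma)|$, so the conjecture is equivalent to a uniform lower bound $|\tr(\gamma)| - 2 \geq c > 0$ valid over all such $\Gamma$ and all non-identity $\gamma \in \Gamma$.

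The key step is the Galois analysis of $t := \tr(\gamma)$. It is an algebraic integer in $k$; at the split real place one has $|t| > 2$, while at every other real place $\sigma'$ of $k$ the image of $\gamma$ lies in the compact group $(B \otimes_{k,\sigma'} \R)^1$, forcing $|\sigma'(t)| \leq 2$. Setting $\lambda := e^{\ell(\gamma)/2}$, the relation $\lambda + \lambda^{-1} = |t|$ realizes $\lambda$ as an algebraic integer of degree at most $2[k:\Q]$ whose $\Q$-conjugates are the reciprocal real pair $\{\lambda,\lambda^{-1}\}$ together with pairs of complex numbers on the unit circle. Generically this is exactly the definition of a \emph{Salem number}, so short translation lengths correspond to Salem numbers close to $1$. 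A converse construction — starting with a Salem number $\lambda$, taking $k = \Q(\lambda + \lambda^{-1})$, and assembling a quaternion algebra over $k$ split only at the distinguished real place — should realize any such $\lambda$ as $e^{\ell(\gamma)/2}$ for some $\gamma$ in a cocompact arithmetic lattice. Together these directions yield the equivalence announced in the abstract.

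The genuine obstacle is not geometric but number-theoretic: proving that Salem numbers are uniformly bounded away from $1$ is precisely the Salem case of Lehmer's problem, open since $1933$. The best unconditional results (Dobrowolski-type bounds on Mahler measures) degrade with the degree of $\lambda$ and hence cannot produce a uniform gap. A few technical points will also need care in the equivalence: boundary/degenerate cases where $|\sigma'(t)| = 2$ at some real place (so that $\lambda$ narrowly fails the strict Salem definition), control of the commensurability passage between $\Gamma$ and $\mathcal{O}^1$, and, for the converse, ensuring that the constructed lattice is genuinely cocompact and torsion-free. These are expected to be manageable, but the Lehmer-theoretic core of the problem is, as far as one can see, the irreducible difficulty.
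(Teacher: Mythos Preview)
Your analysis is sound, but note that the statement in question is a \emph{conjecture}, not a theorem: the paper does not prove it, and neither can you. What the paper does is exactly what you outline in your first two paragraphs --- it records (citing Neumann--Reid and Sury) that the Short Geodesic Conjecture for \(\SL_2(\R)\) is equivalent to the Salem conjecture, and then leaves both open. Your sketch of that equivalence via quaternion algebras over totally real fields, the trace inequality \(|\sigma'(t)|\le 2\) at the ramified real places, and the identification of \(\lambda=e^{\ell(\gamma)/2}\) as a Salem number is the classical argument and matches the literature the paper cites. Your final paragraph correctly isolates the genuine obstruction: a uniform gap for Salem numbers is the Salem case of Lehmer's problem, open since 1933, and Dobrowolski-type bounds do not suffice.

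So there is nothing to correct in your reasoning; the only adjustment is one of framing. You have not produced a proof of the conjecture --- you have reproduced the known reduction to an open Diophantine problem, which is precisely the role this statement plays in the paper. The paper's contribution (Theorem~A) is to extend this equivalence from \(\SL_2(\R)\) to arbitrary simple Lie groups, using the Amoroso--David height bounds and a Galois-cocycle construction rather than the explicit quaternionic description available only in rank one.
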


It turns out that this conjecture is equivalent to a conjecture about algebraic integers known as Salem numbers. A \emph{Salem number} is a real algebraic integer \(x>1\) which is conjugate to \(x^{-1}\) and all other Galois conjugates lie on the unit circle. These numbers appear naturally as eigenvalues of hyperbolic elements of arithmetic lattices in \(\SL_{2}(\R)\). Using this connection, Chinburg, Neumann and Reid \cite{NeumannReid1992} and Sury \cite{Sury1992} showed that \cref{Sury-conjecture} is equivalent to:

\begin{conjecture}[Salem conjecture]
\label{Salem-conjecture}
There exists \(\epsilon>0\) such that \(x>1+\epsilon\) for every Salem number \(x\).
\end{conjecture}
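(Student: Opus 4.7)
The statement is the celebrated \emph{Salem conjecture}, a long-standing open problem in algebraic number theory. Since the minimal polynomial of a Salem number \(x\) is reciprocal, the Mahler measure \(\Mahler(x)\) equals \(x\) itself, so a uniform lower bound on \(\Mahler\) of non-cyclotomic algebraic integers---that is, \emph{Lehmer's conjecture}---would immediately settle it. The plan is to combine the strong arithmetic rigidity coming from the fact that all non-real Galois conjugates of a Salem number lie on the unit circle with height-theoretic inequalities tailored to reciprocal polynomials.

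My first line of attack would be the classical auxiliary-polynomial route of Smyth--Dobrowolski. Let \(P(T)=\prod_{i=1}^{2d}(T-\alpha_{i})\) be the minimal polynomial of a Salem number \(x=\alpha_{1}\) of degree \(2d\), with \(\alpha_{2}=x^{-1}\) and \(\absvalue{\alpha_{i}}=1\) for \(i\geq 3\). The idea is to study the integer resultants \(\mathrm{Res}(P,\Phi_{n})\) for carefully chosen cyclotomic polynomials \(\Phi_{n}\), bound them below by \(1\) when nonzero, and bound them above by \(\prod_{i}\absvalue{\Phi_{n}(\alpha_{i})}\). Since \(2d-2\) of these factors are uniformly \(O(1)\), the inequality reduces to an estimate depending essentially only on \(x\). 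In parallel I would try to adapt Dobrowolski's \(p\)-adic interpolation, choosing primes \(p\) so that the Frobenius action on the roots respects the involution \(\alpha\mapsto\alpha^{-1}\), in order to extract a bound independent of the degree \(d\).

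An alternative route is suggested by the equivalence with the Short Geodesic Conjecture (\cref{Sury-conjecture}). One would assume a hypothetical sequence of short hyperbolic elements \(\gamma_{n}\) in cocompact arithmetic torsion-free lattices \(\Gamma_{n}\subset\SL_{2}(\R)\) arising from orders in indefinite quaternion division algebras over totally real fields, and try to derive a contradiction by combining a congruence-cover construction with a trace-formula or limit-multiplicity estimate for the count of short closed geodesics on the quotient surfaces, possibly in the spirit of the volume and multiplicity bounds that motivate the rest of the paper.

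The main obstacle in both approaches is structural. For the auxiliary-polynomial route, every known unconditional lower bound on Mahler measure has the shape \(\log\Mahler(x)\gg(\log\log d/\log d)^{3}\) or worse and tends to zero with the degree; Salem numbers of arbitrarily large degree are known, so the degree dependence must be genuinely eliminated, something no existing method accomplishes. For the geometric route, the equivalence only translates the question into a different language without producing new arithmetic input, and a priori control of the injectivity radius of an arbitrary arithmetic hyperbolic surface is precisely what the Short Geodesic Conjecture asserts; any real progress will require a genuinely new idea linking heights, arithmetic geometry, and the spectral theory of arithmetic surfaces.
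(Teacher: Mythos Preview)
The statement you were asked to prove is the \emph{Salem conjecture}, which the paper states as a conjecture (\cref{Salem-conjecture}) and does \emph{not} prove. The paper's contribution is \cref{theorem:A}, establishing the \emph{equivalence} between the Salem conjecture and the Margulis conjecture for simple Lie groups; at no point does it claim or supply a proof of the Salem conjecture itself. There is therefore no ``paper's own proof'' to compare your proposal against.

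Your write-up is not a proof either, and you acknowledge this explicitly: you outline two standard lines of attack (the Smyth--Dobrowolski auxiliary-polynomial method and the geometric reformulation via \cref{Sury-conjecture}) and then correctly explain why both fall short---the first because all known lower bounds on Mahler measure decay with the degree, the second because it merely restates the problem. This is an accurate assessment of the state of the art, but it is a survey of obstructions rather than a proof proposal. If the task was to prove \cref{Salem-conjecture}, the honest answer is that it remains open and neither you nor the paper resolves it.
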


In higher rank semisimple Lie groups, eigenvalues of hyperbolic elements may be arbitrary algebraic units. Therefore, it may come as a surprise that for \emph{simple} Lie groups, \cref{Margulis-conjecture'} is still equivalent to \cref{Salem-conjecture}:

\begin{theoremA}
\label{theorem:A}
Let \(G\) be a simple, connected Lie group. Then, the \hyperref[Margulis-conjecture']{Margulis conjecture} (\ref{Margulis-conjecture'}) for \(G\) is equivalent to the \hyperref[Salem-conjecture]{Salem conjecture} (\ref{Salem-conjecture}).
%\cref{Salem-conjecture} is equivalent to the following statement: For any simple real Lie group \(G\), there exists a neighborhood of the identity such that for any arithmetic torsion-free cocompact lattice \(\Gamma\subset G\), we have \(\Gamma\cap U=\{e\}\).
\end{theoremA}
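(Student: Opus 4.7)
The plan is to reduce both directions of the equivalence to a dictionary between short non-torsion elements of cocompact arithmetic lattices in $G$ and Salem numbers close to $1$, exploiting the fact that for simple $G$ the arithmetic construction produces Galois conjugates on the unit circle at all compact archimedean places. By Margulis arithmeticity and the Borel--Harish-Chandra construction, any irreducible cocompact arithmetic $\Gamma \subset G$ is commensurable with $\mathbf{H}(\mathcal{O}_k)$ for an absolutely almost simple, simply connected $k$-group $\mathbf{H}$ with $\mathbf{H}(k \otimes_\Q \R) \cong G \times C$ for a compact Lie group $C$, and cocompactness forces $\mathbf{H}$ to be $k$-anisotropic by Godement. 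In particular every element of $\Gamma$ is semisimple and its image at each non-distinguished archimedean place is contained in a compact subgroup.

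For the implication ``\hyperref[Margulis-conjecture']{Margulis} for $G$ $\Longrightarrow$ \hyperref[Salem-conjecture]{Salem}'' I argue contrapositively: given $\gamma_n \in \Gamma_n \subset G$ non-torsion with $\gamma_n \to e$, I extract Salem numbers tending to $1$. Passing to a power to kill finite-order factors and conjugating, I may assume each $\gamma_n$ lies in a maximal $k_n$-torus $\mathbf{T}_n \subset \mathbf{H}_n$ with a non-trivial component in the $v_0$-split part. Picking an absolute root $\alpha$ of $\mathbf{T}_n$ for which $|\alpha(\gamma_n)|>1$ at $v_0$, the quantity $\tau_n := \alpha(\gamma_n)$ is an algebraic unit whose Galois conjugates are values at $\gamma_n$ of other roots at other archimedean embeddings. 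Because $\mathbf{H}_n(k_{n,v})$ is compact for every $v \neq v_0$, all such conjugates have absolute value $1$; the absolutely-simple hypothesis on $\mathbf{H}_n$ ensures that the pair $\{\alpha,-\alpha\}$ already accounts for all off-circle conjugates of $\tau_n$, so $\tau_n$ is a Salem number. Since the displacement $d(e\cdot K,\gamma_n\cdot K)$ on $X$ controls $|\tau_n|-1$ from above, the failure of uniform discreteness yields Salem numbers $\tau_n \to 1^+$.

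For the reverse, ``\hyperref[Salem-conjecture]{Salem} $\Longrightarrow$ \hyperref[Margulis-conjecture']{Margulis} for $G$'', again by contraposition, I start from Salem numbers $\tau_n \to 1$ and build cocompact arithmetic lattices $\Gamma_n \subset G$ together with non-torsion $\gamma_n \to e$. Set $k_n := \Q(\tau_n + \tau_n^{-1})$, a totally real field with $\tau_n + \tau_n^{-1} > 2$ at a unique real place $v_0$ and $|\tau_n + \tau_n^{-1}| < 2$ at every other real place. I produce an absolutely almost simple, simply connected $k_n$-group $\mathbf{H}_n$ of the absolute type of $G$, with $\mathbf{H}_n(k_{n,v_0}) \cong G$ and $\mathbf{H}_n(k_{n,v})$ compact for all other archimedean $v$; such $\mathbf{H}_n$ exist by standard Galois-cohomology constructions, twisting a quaternion algebra over $k_n$ that records $\tau_n$ by an appropriate inner form. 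The embedded quaternionic subgroup then yields an element of $\mathbf{H}_n(\mathcal{O}_{k_n})$ whose image $\gamma_n \in G$ has $\Ad$-eigenvalues $\tau_n^{\pm 1}$ together with $1$'s, hence is non-torsion and tends to $e$ as $\tau_n \to 1$.

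The main obstacle is the extraction step in the first direction. A priori the eigenvalues of $\Ad(\gamma_n)$ are only algebraic units that are unimodular at the compact archimedean places of $k_n$: this is the weaker Lehmer-type condition, allowing several Galois conjugates off the unit circle. Upgrading to an honest Salem number requires isolating a $2$-dimensional $\Ad(\gamma_n)$-invariant subspace of $\mathfrak{h}_n$ on which $\gamma_n$ acts with eigenvalues $\tau^{\pm 1}$ while all remaining conjugates stay unimodular, which in turn forces the root system of $\mathbf{H}_n$ to contribute a single Galois orbit of off-circle values. This isolation is possible precisely because $G$ is simple, so that $\mathbf{H}_n$ is absolutely almost simple with irreducible root system; for semisimple $G$ the argument splits the Ad action across product factors and produces only Lehmer-type units, which is presumably why the abstract indicates that the semisimple statement requires a separate, weaker formulation.
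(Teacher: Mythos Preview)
Your two contrapositive labels are swapped: assuming non-torsion $\gamma_n\to e$ and producing Salem numbers $\tau_n\to 1$ is the contrapositive of ``Salem $\Rightarrow$ Margulis'', not of ``Margulis $\Rightarrow$ Salem'', and conversely for the second part. The construction of short lattice elements from Salem numbers close to $1$ (your second part) is indeed the known direction, handled in the paper by citing \cite{PhamThilmany2021}, and your sketch there is in the right spirit.

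The genuine gap is in the hard direction, where you extract a Salem number from a short element. Your key claim is that ``the absolutely-simple hypothesis on $\mathbf{H}_n$ ensures that the pair $\{\alpha,-\alpha\}$ already accounts for all off-circle conjugates of $\tau_n$''. This is false. The $\Q$-conjugates of $\tau_n=\alpha(\gamma_n)$ are, at the distinguished place $v_0$, the values $\beta(\gamma_n)$ for $\beta$ ranging over the $\Gal(\overline{k_n}/k_n)$-orbit of $\alpha$ inside the root system. That orbit is typically much larger than $\{\pm\alpha\}$, and for a generic regular $\gamma_n$ in a higher-rank $G$ (already $G=\SL_3(\R)$ suffices) several of these $\beta(\gamma_n)$ lie off the unit circle simultaneously. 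Irreducibility of the root system does not prevent this; your final paragraph is a restatement of the difficulty, not a resolution of it.

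The paper's argument for this direction is entirely different and relies on an ingredient you do not invoke. One first uses the Amoroso--David lower bound for heights of multiplicatively independent pairs (\cref{theorem:Amoroso-David}) to show that if $\mahler(\gamma)$ is small and $[\fieldk:\Q]$ is large, then \emph{any two} non-torsion eigenvalues $\chi(\gamma),\chi'(\gamma)$ are multiplicatively dependent (\cref{proposition:3.1}). This dependence lets one define a cocycle $c\colon\Gal(\overline{\fieldk}/\fieldk)\times S_\gamma^{\hyp}\to\Q^\times$ by $\sigma(\alpha)^p=\alpha^q\mapsto p/q$, prove it takes values in $\{\pm1\}$, and then form $\tilde\alpha=\prod_{\sigma\in\Gal(\fieldk_{\bfT}/\fieldk')}\sigma(\alpha)$ over the quadratic subfield $\fieldk'$ cut out by the cocycle. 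It is $\tilde\alpha$, not $\alpha$ itself, that is shown to be a Salem number, and its Mahler measure is controlled by $\mahler(\gamma)$ times a constant depending only on $\dim G$. The case of bounded $[\fieldk:\Q]$ is handled separately. None of this structure is visible in your proposal, and without the multiplicative-dependence step there is no mechanism forcing a single off-circle conjugate pair.
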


The main new content of \cref{theorem:A} is that the Salem conjecture implies the Margulis conjecture for any simple Lie group \(G\). The fact that \cref{Salem-conjecture} follows from \cref{Margulis-conjecture'} for any isotropic almost simple \(\R\)-group already follows from the main theorem of the second-named author and F. Thilmany \cite{PhamThilmany2021}.

Let us also mention that when \(G\) is the isometry group of real hyperbolic \(n\)-space and \(\Gamma\) an arithmetic lattice of the simplest type \cite{Vinberg1993a}, a version of \cref{theorem:A} follows from \cite{EmeryRatcliffeTschantz2019} where the authors establish a correspondence between translation lengths and Salem numbers for \(\SO(n,1)\). Our approach, however, is different: since such a correspondence cannot be expected to hold in higher rank, we instead focus on \emph{short} geodesics. In this case, we construct Salem numbers using the Galois action on the root system and certain unexpected multiplicative relations between eigenvalues of short elements. A key ingredient in our argument is a theorem of Amoroso and David \cite{AmorosoDavid1999} on a higher dimensional analogue of Lehmer's conjecture. Although bounds towards the Lehmer conjecture have long been known to have applications to systoles of arithmetic orbifolds \cite{Margulis1991}%
\footnote{Recently, Dobrowolski's theorem \cite{Dobrowolski1979} was applied in  \cite{Fraczyk2021}, \cite{Belolipetsky2021}, \cite{LapanLinowitzMeyer2022}, \cite{FrazcykHurtadoRaimbault2022}.},
to our knowledge, this is the first time that bounds in the higher dimensional context are applied to study the length spectrum.

%Let us now move on to the general setting of our main result which covers the semisimple cases.

%More generally, it was conjectured by \cite{NeumannReid1992} that there is a positive universal lower bound for the lengths of closed geodesics in arithmetic hyperbolic \(2\)- and \(3\)-orbifolds, which \cref{Sury-conjecture} to arithmetic lattices in \(\SL(2,\C)\).

To state our most general result, of which \cref{theorem:A} is a direct corollary, we need to recall the relevant notions of Mahler measure and Lehmer's conjecture.

% It will be useful to restate the above purely in terms of the eigenvalues of semisimple elements of \(\Gamma\).
Let \(P\in\C[X]\) be a polynomial of degree \(d\) with leading coefficient \(a_{d}\) and roots \(\alpha_{1},\hdots,\alpha_{d} \in \C\). The \emph{Mahler measure} of \(P\) is defined as
\[
\Mahler(P)=\absvalue{a_{d}}\prod_{i=1}^{d} \max\set[big]{1,\absvalue{\alpha_{i}}}.
\]
We may relabel the roots as \(\alpha_{1}, \hdots,\alpha_{s(P)}, \hdots, \alpha_{d}\), where \(s(P)\) denotes the number of roots of \(P\) in \(\C\) of modulus \(>1\). If \(\alpha \in \AC{\Q}\) is an algebraic integer, the Mahler measure \(\Mahler(\alpha)\) of \(\alpha\) is by definition the Mahler measure \(\Mahler(P_{\alpha})\) of its minimal polynomial \(P_{\alpha}\) over \(\Z\) (which is not monic, but whose coefficients are coprime) \cite{BombieriGubler2006}. The Mahler measure of algebraic integers is invariant under the action of \(\Gal(\AC{\Q} / \Q)\).

By Kronecker's theorem, if \(P\) is an integral monic irreducible polynomial,
\[
\Mahler(P)=1
\quad\text{if and only if}\quad
\begin{cases}
P(X)=X,\tor\\
P~\text{is a cyclotomic polynomial.}
\end{cases}
\]

A conjecture attributed to Lehmer \cite{Lehmer1933} states that we have a strict dichotomy regarding the Mahler measure of irreducible monic polynomials.

\begin{conjecture}[Lehmer]
\label{Lehmer-conjecture}
There exists \(\epsilon>0\) such that for any (irreducible) monic polynomial \(P\) with integer coefficients, either
\[
\Mahler(P)=1 \quad \text{or} \quad \Mahler(P)>1+\epsilon.
\]
\end{conjecture}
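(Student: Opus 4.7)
The statement here is Lehmer's Conjecture, a celebrated open problem since 1933, not a result to be proved in this paper. I therefore do not propose a proof; instead I sketch the state of the art, the most natural line of attack, and the structural obstacle, and then explain the role the conjecture plays in the sequel.

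The best known unconditional bound is Dobrowolski's inequality: for every noncyclotomic monic irreducible $P \in \Z[X]$ of degree $d \geq 2$,
\[
\Mahler(P) \geq 1 + c\left(\frac{\log\log d}{\log d}\right)^{3}
\]
for an absolute constant $c>0$. The argument is transcendence-theoretic: one constructs an auxiliary polynomial with integer coefficients and controlled size, forces it to vanish to high order at the Galois conjugates $\alpha_{1},\ldots,\alpha_{d}$, and then exploits Frobenius congruences $\alpha^{p} \equiv \sigma_{p}(\alpha) \pmod{\frp}$ for many small primes $p$. A plan to refine Dobrowolski would follow this same auxiliary-polynomial/interpolation scheme, seeking either a sharper mean-value estimate on the interpolation error or a new source of arithmetic rigidity among the conjugates.

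A useful first reduction is Smyth's theorem, which yields $\Mahler(P) \geq \theta_{0} = 1.3247\ldots$ (the smallest Pisot number) for every non-reciprocal $P$. The conjecture therefore reduces to the reciprocal case, which is exactly where Salem numbers live. This is the essential obstacle: no current technique excludes Salem numbers arbitrarily close to $1$, and Lehmer's own polynomial $X^{10}+X^{9}-X^{7}-X^{6}-X^{5}-X^{4}-X^{3}+X+1$, with Mahler measure $1.17628\ldots$, has held the record for nearly a century. In short, the hard part of any proposed proof is the reciprocal/Salem regime; everything else is handled by Smyth together with Kronecker.

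In the logical structure of this paper, Lehmer's Conjecture is stated as a hypothesis, echoing the role of the Salem Conjecture in Theorem A. I expect the subsequent sections to deploy partial results — Dobrowolski in degree bounds, and crucially the Amoroso--David theorem on the higher-dimensional analogue of Lehmer's problem mentioned in the introduction — together with Galois-equivariant multiplicative relations among eigenvalues of short lattice elements, in order to obtain the unconditional structural statement on short geodesics announced in the abstract and to complete the equivalence of Margulis' Conjecture with the Salem Conjecture for any simple Lie group.
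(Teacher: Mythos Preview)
Your assessment is correct: the statement is a conjecture, not a result proved in the paper, and the paper makes no attempt to prove it. It is stated purely as background, alongside the weak Lehmer conjecture and the Salem conjecture, so that the signature-refined version (the Lehmer conjecture of signature \((r_{1},r_{2})\)) can be formulated and used as a hypothesis in Theorem~B. Your summary of the state of the art (Dobrowolski, Smyth, the reduction to the reciprocal case, Lehmer's degree-ten polynomial) and of the conjecture's role in the paper is accurate and appropriate; there is nothing further to compare.
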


By allowing the bound to depend on the number of roots outside the unit circle, we obtain the following weaker version of Lehmer's conjecture, first put forth by Margulis \cite{Margulis1991} in connection with \cref{Margulis-conjecture}.

\begin{conjecture}[weak Lehmer]
\label{weak-Lehmer-conjecture}
For each \(s \in \N\), there exists \(\epsilon(s)>0\) such that for any (irreducible) monic polynomial \(P\) with integer coefficients and \(s(P) \leq s\), either
\[
\Mahler(P) = 1 \qor \Mahler(P)>1+\epsilon(s).
\]
\end{conjecture}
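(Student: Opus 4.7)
The plan is to proceed by contradiction. Fix $s\in\N$ and suppose there is a sequence $(P_{n})$ of monic irreducible polynomials in $\Z[X]$ with $s(P_{n})\leq s$ and $\Mahler(P_{n})\to 1$, while $\Mahler(P_{n})>1$ for every $n$. Dobrowolski's theorem gives $\Mahler(P)\geq 1+c(\log\log d/\log d)^{3}$ for $d=\deg P\geq 2$, so the degrees $d_{n}$ must tend to infinity; the real content is to produce a lower bound for $\Mahler(P)$ depending only on $s(P)$, not on $d$.

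First I would exploit $s(P_{n})\leq s$ directly. Since $\Mahler(P_{n})\leq R_{n}^{s}$ where $R_{n}$ is the largest modulus of a root of $P_{n}$, we have $R_{n}\to 1$. Smyth's theorem handles the non-reciprocal case, so we may assume each $P_{n}$ is reciprocal; the roots then pair as $\{\alpha,\alpha^{-1}\}$, forcing the $s$ roots inside the unit disc to have modulus tending to $1$ as well, and the remaining $d_{n}-2s(P_{n})$ Galois conjugates already sit on $S^{1}$. By Bilu's equidistribution theorem, the roots of $P_{n}$ equidistribute on $S^{1}$ as $n\to\infty$, so $P_{n}$ looks asymptotically like a small perturbation of a product of cyclotomic polynomials.

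The core step is to upgrade the Dobrowolski--Amoroso--David machinery so that the governing parameter is $s(P)$ rather than $d$. Concretely, I would pick a prime $p$ of size a suitable function of $s$ alone and construct an auxiliary polynomial $F\in\Z[X_{1},\ldots,X_{s}]$ of controlled bidegree vanishing to high order at the tuple of off-circle roots $(\alpha_{1},\ldots,\alpha_{s})$ of $P_{n}$. The Frobenius-type congruence $\prod_{i}\sigma(\alpha_{i})^{p}\equiv\prod_{i}\sigma(\alpha_{i})\pmod{p}$ in the ring of integers of the splitting field should then force an integer quantity to be simultaneously small (by the small height of the $\alpha_{i}$) and divisible by a large power of $p$, yielding the contradiction provided the quantity does not vanish. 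The Amoroso--David theorem on Lehmer's problem for algebraic tori, which already appears in this paper, provides a template for the required multivariate construction.

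The main obstacle -- and presumably the reason this conjecture has remained open since $1933$ even for $s=1$ (which, combined with Smyth, is the Salem conjecture) -- is that the $d_{n}-2s(P_{n})$ conjugates on the unit circle contribute nothing to the height yet are essentially unconstrained arithmetically. In the classical Dobrowolski argument the contradiction rests on a balance between $d$ conjugates and the prime $p$; this balance collapses when only $2s$ of the conjugates carry height. Overcoming this likely requires a genuinely new ingredient: an effective Bilu-type equidistribution theorem that quantifies how close the on-circle conjugates of a small-Mahler-measure number must sit to exact roots of unity, or a sharper version of Amoroso--David that extracts all of the needed arithmetic from the $s$-tuple $(\alpha_{1},\ldots,\alpha_{s})$ alone. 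Without such new input the strategy above stalls precisely where every previous approach to weak Lehmer has.
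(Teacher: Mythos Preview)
The statement you are addressing is a \emph{conjecture}, not a theorem; the paper does not prove it and offers no proof to compare against. The weak Lehmer conjecture is stated in the introduction only as background and motivation for \cref{Margulis-conjecture'}, and the paper's actual results (\cref{theorem:A,theorem:B,theorem:C,theorem:D}) are conditional implications or unconditional structural statements that carefully avoid claiming any new case of Lehmer.

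Your own write-up is honest about this: you outline a Dobrowolski/Amoroso--David style attack and then correctly identify the obstruction---the \(d_{n}-2s(P_{n})\) conjugates on the unit circle carry no height, so the standard auxiliary-polynomial balance collapses---and you conclude that the approach stalls. That is an accurate diagnosis, but it means what you have written is not a proof proposal at all; it is a survey of why the problem is open. In particular, the step ``upgrade the Dobrowolski--Amoroso--David machinery so that the governing parameter is \(s(P)\) rather than \(d\)'' is not a step but the entire conjecture restated, and invoking an ``effective Bilu-type equidistribution theorem'' of the required strength is again assuming something not known to exist. Even the case \(s=1\) (Salem) remains open, so no argument along these lines can currently be completed.
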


Margulis \cite{Margulis1991} observed that \cref{weak-Lehmer-conjecture} applied to the eigenvalues of semisimple elements of \(\Gamma\) would readily imply \cref{Margulis-conjecture}. Algebraic integers with \(s=1\) are precisely \emph{Salem numbers}.

It is possible to go in the other direction as well: \cref{Margulis-conjecture'} for certain families of semisimple groups implies \cref{weak-Lehmer-conjecture}.

\begin{theorem}[Pham-Thilmany\cite{PhamThilmany2021}]
\label{main-theorem}
Fix an absolutely (almost) simple isotropic \(\R\)-group \(\bfF\), and consider, for each integer \(r \geq 1\), the family of semisimple \(\R\)-groups
\[
\scrT_{\bfF}^{(r)} = \Set{\prod_{i=1}^{r_1} \bfF \times \prod_{i=1}^{r_2} \Res_{\C/\R}(\bfF) \st r_1, r_2 \in \N,~r_1+2r_2 \leq r}.
\]
Then, for any \(r\geq 1\), \cref{Margulis-conjecture'} for any of the families \(\scrT_{\bfF}^{(r)}\), implies \cref{weak-Lehmer-conjecture} (with \(s(P)\leq r\)).
\end{theorem}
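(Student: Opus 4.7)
The plan is to argue by contrapositive. Suppose \cref{weak-Lehmer-conjecture} fails at level $r$: there exist irreducible monic polynomials $P_{n}\in\Z[X]$ with $s(P_{n})\leq r$ and $1<\Mahler(P_{n})\to 1$. From each $P_{n}$, I will produce an irreducible cocompact arithmetic lattice $\Gamma_{n}$ in some group $\bfG_{n}(\R)\in\scrT_{\bfF}^{(r)}$ containing a non-torsion element $\gamma_{n}\to 1$, contradicting \cref{Margulis-conjecture'} for the family. Fix a root $\alpha_{n}$ of $P_{n}$ with $\absvalue{\alpha_{n}}>1$, so that $\absvalue{\alpha_{n}}\leq\Mahler(P_{n})^{1/s(P_{n})}\to 1$. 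Applying a standard preliminary reduction (e.g.\ passing from $\alpha_{n}$ to a suitable product of conjugates, or to a root of a reciprocal polynomial built from $P_{n}$, while keeping the relevant invariants under control), I may assume that $\alpha_{n}$ is an algebraic unit satisfying a reciprocal polynomial equation, so that $\alpha_{n}\mapsto\alpha_{n}^{-1}$ extends to a Galois involution of $K_{n}:=\Q(\alpha_{n})$ with fixed field $F_{n}$.

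The second step is to build an $F_{n}$-form $\bfG_{n}$ of $\bfF$ with prescribed archimedean signature and a specified maximal torus. Let $\scrS:=\ker(N_{K_{n}/F_{n}}\colon\Res_{K_{n}/F_{n}}\Gm\to\Gm[F_{n}])$; at each archimedean place $w$ of $F_{n}$, $\scrS_{w}$ is split precisely when $K_{n}\otimes_{F_{n}}F_{n,w}$ is split, which happens exactly when $w$ corresponds to a conjugate of $\alpha_{n}$ lying off the unit circle. Invoking the Borel--Harder existence theorem and the Hasse principle for absolutely almost simple groups, choose $\bfG_{n}$ containing $\scrS$ as an $F_{n}$-rational subtorus, isomorphic to $\bfF$ at every archimedean place of $F_{n}$ where $\scrS$ is split, and isomorphic to the compact real form of $\bfF$ at every other archimedean place (which exists since $\bfF$ is absolutely almost simple). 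A count of split archimedean places of $\scrS$, weighted by real vs complex, yields $r_{1}+2r_{2}\leq s(P_{n})\leq r$, so the non-compact factor of $\bfG_{n}(\R)$ is $\prod_{i=1}^{r_{1}}\bfF\times\prod_{i=1}^{r_{2}}\Res_{\C/\R}\bfF\in\scrT_{\bfF}^{(r)}$, and $\Gamma_{n}:=\bfG_{n}(\scrO_{F_{n}})$ projects onto an irreducible cocompact arithmetic lattice in it.

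The third step produces the element. Via the inclusion $\scrS\hookrightarrow\bfG_{n}$, $\alpha_{n}$ defines an element $g_{n}\in\scrS(\scrO_{F_{n}})\subset\bfG_{n}(\scrO_{F_{n}})=\Gamma_{n}$ whose eigenvalues at each archimedean place $w$ of $F_{n}$ are $w(\alpha_{n}),w(\alpha_{n})^{-1}$. At non-compact real places these lie close to $\pm 1$; at non-compact complex places they lie on a small annulus around the unit circle but with possibly arbitrary phase. Applying Dirichlet's simultaneous approximation theorem to the arguments of $w(\alpha_{n})$ over the (at most $r/2$) non-compact complex places, together with a sign adjustment at non-compact real places, one finds an integer $N_{n}\in\N$ (of size polynomial in $(\Mahler(P_{n})-1)^{-1}$) such that $w(\alpha_{n})^{N_{n}}\to 1$ at every non-compact archimedean place. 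The power $\gamma_{n}:=g_{n}^{N_{n}}\in\Gamma_{n}$ therefore converges to the identity in the non-compact factor of $\bfG_{n}(\R)$. On the other hand, $\alpha_{n}$ is not a root of unity by Kronecker's theorem (since $\Mahler(P_{n})>1$), so $\gamma_{n}$ is non-torsion, contradicting \cref{Margulis-conjecture'}.

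The principal obstacle is Step 2. Archimedean invariants of an absolutely almost simple group satisfy global reciprocity constraints (typically a product formula over all places on local Tits indices, or parities of local discriminants and Hasse invariants), so the prescribed archimedean signature may not be directly realisable as a single $F_{n}$-form of $\bfF$ of the desired inner type. One must absorb any such cohomological obstruction at finite places of $F_{n}$, or enlarge $F_{n}$ by a small totally real auxiliary extension while preserving the bound $r_{1}+2r_{2}\leq r$; the verification requires a case analysis depending on the Killing--Cartan type of $\bfF$. A related subtlety is arranging that the inclusion $\scrS\hookrightarrow\bfG_{n}$ exists globally over $F_{n}$ (not merely locally), which is secured by picking $\bfG_{n}$ in a compatible inner class.
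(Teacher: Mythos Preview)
This theorem is not proved in the present paper: it is quoted from \cite{PhamThilmany2021} and stated here only for context, with no argument supplied. There is therefore no ``paper's own proof'' to compare your proposal against.

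That said, your sketch is broadly in the spirit of the construction in \cite{PhamThilmany2021}: start from a hypothetical sequence of algebraic units witnessing the failure of weak Lehmer, build number fields and \(\fieldk\)-forms of \(\bfF\) with controlled archimedean signature, and produce non-torsion lattice elements converging to the identity in the non-compact factor. Your identification of Step~2 (realising the prescribed archimedean data as a global form, subject to reciprocity constraints) as the crux is correct, and the Dirichlet-approximation argument in Step~3 to kill the phases at complex places is the right idea.

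There is, however, a genuine gap in Step~1. You write that ``applying a standard preliminary reduction'' one may assume \(\alpha_{n}\) satisfies a reciprocal minimal polynomial, so that \(\alpha_{n}\mapsto\alpha_{n}^{-1}\) extends to a Galois involution of \(K_{n}=\Q(\alpha_{n})\) over a subfield \(F_{n}\). This is not standard and, as stated, not obviously true: a general algebraic unit of small Mahler measure need not generate a field admitting such an involution, and the manoeuvres you gesture at (products of conjugates, passage to a reciprocal polynomial built from \(P_{n}\)) do not in general preserve irreducibility or the bound \(s(P)\leq r\). Your entire construction of the norm-one torus \(\scrS=\ker N_{K_{n}/F_{n}}\) rests on this quadratic-extension structure, so without it Step~2 does not get off the ground. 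In \cite{PhamThilmany2021} this issue is handled by a more careful construction that does not presuppose the reciprocal structure a priori; you would need either to supply a genuine reduction to that case or to replace \(\scrS\) by a torus built directly from \(\Q(\alpha_{n})\) without the intermediate field \(F_{n}\).
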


Similarly to \cref{theorem:A}, we can actually prove that the weak Lehmer conjecture with fixed \(s(P)\) will imply \cref{Margulis-conjecture'} for a quite large family of semisimple groups, subject only to certain conditions on the number of simple factors. To be more precise, we will consider a version of weak Lehmer conjecture for algebraic numbers with prescribed number of real and non-real conjugates outside the unit circle.
Let \(r_{1},r_{2}\in \N\). Say that an algebraic integer \(\alpha\) \emph{has signature \((r_{1},r_{2})\)} if
\begin{align*}
r_{1}&=\card\Set{\sigma\in \Hom(\Q(\alpha),\C)\st \sigma(\alpha)\in \R \tand \absvalue{\alpha}> 1},\\
2r_{2}&=\card\Set{\sigma\in \Hom(\Q(\alpha),\C)\st \sigma(\alpha)\in \C\setminus \R, \tand \absvalue{\alpha}> 1}.
\end{align*}
Then, the \emph{Lehmer conjecture of signature \((r_{1},r_{2})\)} is the following statement.

\begin{env}[Lehmer conjecture of signature \((r_{1},r_{2})\)]
\label{Lehmer-conjecture-signature}
There exists \(\varepsilon>0\) such that every algebraic integer \(\alpha\) of signature \((r_{1},r_{2})\) satisfies \(\Mahler(\alpha)\geq \varepsilon\).
\end{env}

We now define the corresponding notion of signature for a real semisimple Lie group. Let \(G\) be a semisimple, simply connected real algebraic group. We can decompose \(G\) as a product of simple real Lie groups and a certain number of almost simple real Lie groups, which are restrictions of scalars of complex simple groups. Let \(r_{1}(G)\) be the number of non-compact real simple factors and \(r_{2}(G)\) be the number of complex simple factors. For example the group
\[
G=\SL_{4}(\R)\times \SU(2,2)\times \SL_{4}(\C)^{2}\times \SU(4)^{3}
\]
has signature \(r_{1}(G)=2\) and \(r_{2}(G)=2\). We will refer to the pair \((r_{1}(G),r_{2}(G))\) as the \emph{signature} of \(G\). The general version of \cref{theorem:A} is

\begin{theoremA}
\label{theorem:B}
Let \(G\) be a semisimple, simply connected real algebraic group of signature \((r_{1},r_{2})\). Then, the \hyperref[Margulis-conjecture']{Margulis conjecture} for \(G\) follows from the \hyperref[Lehmer-conjecture-signature]{Lehmer conjecture} (\ref{Lehmer-conjecture-signature}) for all signatures \((s,t)\) where \(0\leq t\leq r_{2}\) and \(1\leq s+2t\leq r_{1}+2r_{2}\).
\end{theoremA}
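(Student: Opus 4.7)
The plan is to assume the \hyperref[Lehmer-conjecture-signature]{Lehmer conjecture} of signature $(s,t)$ for every pair in the stated range and to construct a neighborhood $U$ of the identity in $G(\R)$ such that $U\cap\Gamma$ consists of torsion elements for every irreducible cocompact arithmetic lattice $\Gamma\subset G(\R)$. First I reduce to semisimple elements: cocompactness together with Godement's criterion rules out nontrivial unipotents, so if $\gamma\in\Gamma$ is close enough to the identity, the Jordan decomposition $\gamma=\gamma_{s}\gamma_{u}$ forces $\gamma_{u}=1$ and $\gamma$ is semisimple.

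By arithmeticity there exist a number field $k$, a $k$-form $\bfG$, and compact factors $C$ such that $\Gamma$ is commensurable with the image of $\bfG(\mathcal{O}_{k})$ diagonally embedded in $G(\R)\times C$. The signature $(r_{1},r_{2})$ of $G$ records which Archimedean completions $\bfG(k_{v})$ contribute noncompact real simple factors (giving $r_{1}$) versus noncompact complex simple factors (giving $r_{2}$), while the remaining Archimedean places are compact. Fix a faithful $k$-rational representation of $\bfG$; the semisimple element $\gamma$ then has eigenvalues $\lambda_{1},\ldots,\lambda_{N}$, which are algebraic units, and the Galois conjugates of each $\lambda_{i}$ are precisely the eigenvalues of the corresponding conjugate elements $\gamma^{\sigma}$ at the various Archimedean places of $k$. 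Conjugates at compact places lie on the unit circle, so the signature $(s(\lambda_{i}),t(\lambda_{i}))$ of each eigenvalue \textit{as an algebraic integer} necessarily satisfies $0\leq t(\lambda_{i})\leq r_{2}$ and $s(\lambda_{i})+2t(\lambda_{i})\leq r_{1}+2r_{2}$. If $\gamma$ is not torsion, some $\lambda_{i}$ is not a root of unity, and since $\lambda_{i}$ is a unit, some Galois conjugate has modulus $>1$, giving $s(\lambda_{i})+2t(\lambda_{i})\geq 1$. This places the signature of each relevant eigenvalue precisely in the range covered by the hypothesis.

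The hypothesized Lehmer conjecture of signature $(s,t)$ then gives a uniform lower bound $\Mahler(\lambda_{i})\geq 1+\varepsilon(s,t)$; since the admissible set of signatures is finite, the minimum $\varepsilon_{0}>0$ is uniform. For $\gamma$ close to the identity in $G(\R)$, the eigenvalues at the distinguished places are close to $1$, so the bulk of $\Mahler(\lambda_{i})$ must be contributed by Galois conjugates at the other noncompact Archimedean places. When the $\lambda_{i}$ are multiplicatively independent modulo roots of unity, combining the signature analysis with the Lehmer hypothesis yields the contradiction directly; when they satisfy nontrivial multiplicative relations coming from the character lattice of a maximal torus of $\bfG$, the higher-dimensional theorem of Amoroso and David \cite{AmorosoDavid1999} on Lehmer's problem for tori provides the effective height lower bound needed to preclude that all $\lambda_{i}$ cluster simultaneously at $1$ at the distinguished places. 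Consequently, for $\gamma$ sufficiently close to identity every $\lambda_{i}$ must be a root of unity, which for a semisimple element of the simply connected group $G$ forces $\gamma$ itself to be torsion.

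The main obstacle is the final step: ruling out the scenario where several short eigenvalues satisfy intricate $\Q$-linear relations among their logarithms, allowing their individual Mahler measures to concentrate at nondistinguished places while remaining numerically large. Handling this is exactly what Amoroso--David's inequality on higher-dimensional tori is designed to do, but wiring it up requires a careful Galois-theoretic analysis of which subtori of $\bfG$ can host short elements and the construction, by taking products of conjugate eigenvalues indexed by the Galois action on the root system, of auxiliary algebraic numbers whose signatures fall in the finite list $\{(s,t):0\leq t\leq r_{2},\,1\leq s+2t\leq r_{1}+2r_{2}\}$. A secondary subtlety is uniformity across all lattices $\Gamma$: since the degree $[k:\Q]$ is a priori unbounded, one must argue either that effective bounds on $[k:\Q]$ follow from the signature $(r_{1},r_{2})$, or that the relevant Mahler measures can be controlled after passage to a fixed finite collection of Galois-stable subtori.
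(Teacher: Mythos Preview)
Your central claim---that each eigenvalue \(\lambda_i=\chi(\gamma)\) of \(\Ad\gamma\) already has signature \((s,t)\) with \(s+2t\le r_1+2r_2\)---is false, and this is precisely the difficulty the paper's machinery is built to overcome. The eigenvalue \(\lambda_i\) lives in \(\fieldk_\gamma\), not in \(\fieldk\), and its \(\Q\)-conjugates over a single non-compact place \(\tau\colon\fieldk\hookrightarrow\C\) are \emph{all} the numbers \(\tau(\chi^\sigma(\gamma))\) as \(\sigma\) ranges over \(\Gal(\fieldk_\bfT/\fieldk)\); several of these can have modulus \(>1\) simultaneously. Thus the number of \(\Q\)-conjugates of \(\lambda_i\) off the unit circle is bounded only by \(\lvert V_{nc}\rvert\cdot[\fieldk_\gamma:\fieldk]\), not by \(r_1+2r_2\). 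For instance, with \(G=\SL_3(\R)\) (so \((r_1,r_2)=(1,0)\)) and a regular split \(\gamma\), an eigenvalue \(\alpha\) of \(\Ad\gamma\) typically has three real \(\fieldk\)-conjugates of modulus \(>1\), hence signature \((3,0)\), and the Salem conjecture says nothing about it.

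You also have the role of Amoroso--David reversed. Their theorem is a lower bound on \(\prod_i\Weilheight(\alpha_i)\) for \emph{multiplicatively independent} tuples; it gives no information in the dependent case. The paper's logic runs the other way: for bounded \([\fieldk:\Q]\) Margulis' conjecture is already known, so one may assume \([\fieldk:\Q]\) is large; then Amoroso--David (Proposition~\ref{proposition:3.4}) forces any two non-torsion eigenvalues of a short \(\gamma\) to be multiplicatively \emph{dependent}. Only after this dependence is established does the Galois-cocycle analysis of Section~4 kick in: one shows the cocycle \(c(\sigma,\alpha)\in\{\pm1\}\), uses it to locate a quadratic extension \(\fieldk'=\fieldk(\ulalpha)\), and then forms \(\tilde\alpha=\prod_{\sigma\in\Gal(\fieldk_\bfT/\fieldk')}\sigma(\alpha)\), which lies in \(\fieldk'\) and therefore genuinely has signature in the desired range. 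The Lehmer hypothesis is applied to \(\tilde\alpha\), not to the raw eigenvalues. Your outline mentions ``products of conjugate eigenvalues indexed by the Galois action'' as a technical afterthought, but this construction is the heart of the proof, and it is only available \emph{because} of the multiplicative dependence furnished by Amoroso--David.
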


Note that in the other direction, the main theorem of \cite{PhamThilmany2021} shows that the \hyperref[Margulis-conjecture']{Margulis conjecture} for \(G\) for all signatures \((s,t)\) where \(0\leq t\leq r_{2}\) and \(1\leq s+2t\leq r_{1}+2r_{2}\) implies the \hyperref[Lehmer-conjecture-signature]{Lehmer conjecture} (\ref{Lehmer-conjecture-signature}) for the same set of signatures.

In addition to the previous results, our method of proof also enables us to draw some conclusions about the length spectra of arithmetic orbifolds, independently of the validity of the Lehmer conjecture(s). We recall that the \emph{length spectrum} of such an orbifold \(\Gamma\backslash X\) is the set of lengths of closed geodesics. In \(\Gamma\backslash X\), the length of the closed geodesic corresponding to a semisimple element \(\gamma\in\Gamma\subset\bfG(\fieldk)\) is given by the formula%
\footnote{Refer to \cref{section:2} for the description of \(\fieldk\), \(\bfG\), \(\bfT\), and \cref{subsection:notation} below for \(V_{\fieldk}\).}
(see \cite[\S 8]{PrasadRapinchuk2009})
\begin{equation}
\label{eq:length}
\ell(\gamma)=\left(\sum_{\sigma\in V_{\fieldk}}\sum_{\lambda\in \Phi(\bfG,\bfT)}\big(\log\absvalue{\lambda(\gamma)}_{\sigma}\big)^{2}\right)^{1/2},
\end{equation}
The length spectrum is the set
\[
\cL(\Gamma\backslash X)=\{\ell(\gamma)\mid \gamma\in \Gamma ~\text{semisimple}\}.
\]
\hyperref[Margulis-conjecture']{Margulis' conjecture} can be restated by saying that there exists a constant \(\delta>0\), depending only on \(X\), such that \(\cL(\Gamma\backslash X)\cap (0,\delta)=\emptyset\) for all arithmetic lattices \(\Gamma\). We can now state:

\begin{theoremA}
\label{theorem:C}
Let \(G\) be a semisimple Lie group without compact factors. There exists \(\delta>0\) with the following property. For any irreducible arithmetic lattice \(\Gamma\subset G\) the intersection
\[
\{\ell^{2}\mid \ell\in \cL(\Gamma\backslash X)\}\cap [0,\delta)
\]
is contained in a cyclic subgroup of \(\R\).
In particular, if \(\ell(\gamma_{1})\) and \(\ell(\gamma_{2})\) are two lengths whose squares are independent over \(\Q\), then they cannot be simultaneously small.
\end{theoremA}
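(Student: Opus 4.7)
The strategy is to combine the Kazhdan--Margulis lemma, which forces short elements into a common torus, with the higher-dimensional Lehmer-type theorem of Amoroso--David~\cite{AmorosoDavid1999}, which converts short lengths into multiplicative relations among eigenvalues. By Margulis' Arithmeticity Theorem, realize $\Gamma$ as an arithmetic subgroup of $\bfG(\fieldk)$ with $\bfG$ absolutely almost simple over a number field $\fieldk$. The first key input is a universal identity neighborhood $U \subset G$ (depending only on $G$) provided by the Kazhdan--Margulis theorem, such that $\langle \Gamma \cap U \rangle$ is virtually nilpotent for every lattice $\Gamma \subset G$; since $\Gamma$ is cocompact it contains no nontrivial unipotent elements, so in fact $\langle \Gamma \cap U \rangle$ is virtually abelian and consists of semisimple elements. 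Choosing $\delta$ small enough that $\ell(\gamma)^{2} < \delta$ forces a conjugate of $\gamma$ to lie in $U$, all short semisimple elements of $\Gamma$ pairwise commute and thus lie in a common maximal $\R$-torus, which arithmeticity allows us to take to be a $\fieldk$-torus $\bfT$.

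The arithmetic core is the second step. Fix two short semisimple $\gamma_{1}, \gamma_{2} \in \bfT(\fieldk) \cap \Gamma$ and view them in $\bfT(\bar{\fieldk})$. By \eqref{eq:length}, smallness of $\ell(\gamma_{i})$ translates into smallness of $\log|\lambda(\gamma_{i})|_{\sigma}$ for every root $\lambda \in \Phi(\bfG, \bfT)$ and archimedean place $\sigma$, and hence into smallness of the Weil heights of the eigenvalues. Amoroso--David's extension of Dobrowolski's theorem to higher dimensions yields a lower bound on heights of tuples of multiplicatively independent algebraic units, with dependence on the field degree only polylogarithmic. Applied here: if the subgroup of $\bfT(\bar{\fieldk})$ generated by $\gamma_{1}, \gamma_{2}$ modulo torsion has rank $\geq 2$, the joint height is bounded below by a constant depending only on $G$ (using that $[\fieldk:\Q]$, $\dim \bfT$, and the relative degree of the splitting field of $\bfT$ are uniformly controlled). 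Shrinking $\delta$ if necessary contradicts smallness of the heights, so there exist integers $a, b$ not both zero with $\gamma_{1}^{a} \gamma_{2}^{-b} \in \bfT(\bar{\fieldk})_{\mathrm{tors}}$.

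Evaluating any root $\lambda$ on both sides of this relation and taking $\log|\cdot|_{\sigma}$ yields $a \log|\lambda(\gamma_{1})|_{\sigma} = b \log|\lambda(\gamma_{2})|_{\sigma}$ for every $\lambda$ and $\sigma$, whence $a^{2} \ell(\gamma_{1})^{2} = b^{2} \ell(\gamma_{2})^{2}$ by \eqref{eq:length}. Running this argument over all pairs of short semisimple elements---a finite set, since the length spectrum of the cocompact $\Gamma \backslash X$ is discrete---the squared lengths in $[0, \delta)$ are pairwise $\Q$-commensurable and thus lie in a common cyclic subgroup of $\R$. The main obstacle is securing the exact form of the Amoroso--David bound required: one must identify a two-variable Lehmer-type lower bound for tori whose implicit constant depends only on the data of $G$ (not on the specific lattice), and propagate the polylogarithmic dependence on $[\fieldk(\gamma_{1}, \gamma_{2}):\Q]$ through Margulis' degree bound to extract a $\delta$ uniform over all irreducible arithmetic lattices in $G$.
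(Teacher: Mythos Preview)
Your proposal contains a genuine gap in the Kazhdan--Margulis step. The Margulis lemma gives, for each point \(x\in X\), a neighborhood \(U\) such that \(\langle\{\gamma\in\Gamma:\gamma x\in Ux\}\rangle\) is virtually nilpotent. An element \(\gamma\) with small translation length has \emph{some} conjugate in \(U\), namely the conjugate moving the basepoint to a point on the axis of \(\gamma\). But two short elements \(\gamma_{1},\gamma_{2}\) have axes that need not come close to one another, so the conjugating elements \(g_{1},g_{2}\) are unrelated and there is no reason for \(g_{1}\gamma_{1}g_{1}^{-1}\) and \(g_{2}\gamma_{2}g_{2}^{-1}\) to lie in a common \(\Gamma\cap gUg^{-1}\), let alone for \(\gamma_{1},\gamma_{2}\) themselves to commute. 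In fact one should not expect all short closed geodesics to sit in a single flat. Your claim that \([\fieldk:\Q]\) is ``uniformly controlled'' is also incorrect: the trace field degree is unbounded over arithmetic lattices in a fixed \(G\), and there is no ``Margulis degree bound'' to invoke.

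The paper sidesteps both issues. It never places \(\gamma_{1},\gamma_{2}\) in a common torus; instead it takes separate maximal \(\fieldk\)-tori \(\bfT_{1}\ni\gamma_{1}\), \(\bfT_{2}\ni\gamma_{2}\) and observes that the eigenvalues \(\lambda_{1}(\gamma_{1}),\lambda_{2}(\gamma_{2})\) are algebraic numbers regardless, so Amoroso--David applies directly to any such pair. If every such pair were multiplicatively dependent, all eigenvalues of both elements would lie in a single rank-one subgroup of \(\bar\Q^{\times}/\text{torsion}\), forcing \(\log|\lambda(\gamma_{i})|_{\sigma}=q_{i,\lambda}\alpha_{\sigma}\) with \(q_{i,\lambda}\in\Q\) independent of \(\sigma\), and then \eqref{eq:length} factors to give \(\ell(\gamma_{1})^{2}/\ell(\gamma_{2})^{2}\in\Q\). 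As for the degree, the paper exploits rather than fights it: \cref{lemma:DegLB} shows \([\Q(\lambda_{i}(\gamma_{i})):\Q]\gg[\fieldk:\Q]\) because most archimedean places of \(\fieldk\) are compact, and after converting heights to Mahler measures the Amoroso--David bound becomes \(\mahler(\lambda_{1}(\gamma_{1}))\mahler(\lambda_{2}(\gamma_{2}))\gg[\fieldk:\Q](\log[\fieldk:\Q])^{-C_{2}}\), which is bounded below uniformly in \(\fieldk\). This is what makes \(\delta\) uniform over all lattices.
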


We refer the reader to \cref{section:5} where a more precise result is proved. We only note that the value of \(\delta\) can be made to grow with the degree of the trace field of \(\Gamma\).

Finally, we obtain a precise rank-one obstruction to \hyperref[Margulis-conjecture']{Margulis' conjecture}: if a semisimple element \(\gamma\) generates a higher rank torus, then the length of the corresponding closed geodesic admits a uniform lower bound.

\begin{theoremA}
\label{theorem:D}
Let \(G\) be a noncompact simple (real or complex) Lie group. Let \(\gamma\) be a semisimple element of an arithmetic lattice \(\Gamma\subset G\) with trace field \(\fieldk\). Let \(S\) be the connected component of the the Zariski closure of the subgroup generated by \(\gamma\). If \(\rk S\geq 2\) then
\[
\ell(\gamma)\geq C_{1}^{1/2}\cdot \sqIndex{\fieldk}{\Q}^{1/2}\big(\log \sqIndex{\fieldk}{\Q}\big)^{-C_{2}/2}.
\]
\end{theoremA}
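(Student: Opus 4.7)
The plan is to apply the Amoroso--David generalization of Lehmer's conjecture to a pair of multiplicatively independent eigenvalues of $\gamma$, deducing a lower bound on their Weil heights, and then to convert this into a lower bound on $\ell(\gamma)$ by exploiting that only the unique non-compact archimedean place $\sigma_{0}$ of $\fieldk$ contributes nontrivially to the length formula~\eqref{eq:length}.

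Realize $\gamma$ inside a maximal $\fieldk$-torus $\bfT$ of the $\fieldk$-form $\bfG$ defining $\Gamma$. Since the root lattice has finite index in $X^{*}(\bfT)$ and the restriction map $X^{*}(\bfT)\twoheadrightarrow X^{*}(S)$ is surjective, the restrictions of the roots to $S$ span a sublattice of rank $r\geq 2$ in $X^{*}(S)$. I pick roots $\lambda_{1},\lambda_{2}\in\Phi(\bfG,\bfT)$ whose restrictions to $S$ are $\Z$-linearly independent. If $\lambda_{1}(\gamma)^{n_{1}}\lambda_{2}(\gamma)^{n_{2}}$ were a root of unity, raising to an appropriate power would produce a character $N(n_{1}\lambda_{1}+n_{2}\lambda_{2})\in X^{*}(\bfT)$ vanishing at $\gamma$, hence on the Zariski closure $\overline{\langle\gamma\rangle}\supseteq S$; torsion-freeness of $X^{*}(S)$ then forces $(n_{1}\lambda_{1}+n_{2}\lambda_{2})|_{S}=0$, and the chosen independence gives $n_{1}=n_{2}=0$. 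Thus $(\lambda_{1}(\gamma),\lambda_{2}(\gamma))\in(\AC{\Q}^{*})^{2}$ lies on no proper algebraic subgroup of $\mathbb{G}_{m}^{2}$, and the Amoroso--David theorem yields
\[
\max\{h(\lambda_{1}(\gamma)),h(\lambda_{2}(\gamma))\}\geq c_{1}\,D^{-1/2}(\log 3D)^{-\kappa},
\]
where $D=[\Q(\lambda_{1}(\gamma),\lambda_{2}(\gamma)):\Q]$.

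For the height-to-length conversion, observe that at every compact archimedean place of $\fieldk$ the element $\gamma$ maps into a compact group, and at every non-archimedean place $\gamma$ is an $\scrO_{\fieldk}$-unit; consequently $\absvalue{\lambda_{i}(\gamma)}_{\tau}=1$ for every place $\tau$ of $L:=\fieldk(\lambda_{i}(\gamma))$ that does not lie above $\sigma_{0}$. The Mahler-measure expression for the Weil height then collapses to a sum over only the $[L:\fieldk]$ embeddings of $L$ extending $\sigma_{0}$:
\[
h(\lambda_{i}(\gamma))=\frac{1}{[L:\Q]}\sum_{\widetilde{\sigma}_{0}|\sigma_{0}}\log^{+}\absvalue{\widetilde{\sigma}_{0}(\lambda_{i}(\gamma))}\leq\frac{1}{\sqIndex{\fieldk}{\Q}}\max_{\widetilde{\sigma}_{0}|\sigma_{0}}\log^{+}\absvalue{\widetilde{\sigma}_{0}(\lambda_{i}(\gamma))}.
\]
Each conjugate $\widetilde{\sigma}_{0}(\lambda_{i}(\gamma))$ coincides with $\lambda'(\gamma)$ in $\fieldk_{\sigma_{0}}$ for some root $\lambda'\in\Phi(\bfG,\bfT)$, because the Galois action on $\bfT$ permutes the roots; it is therefore bounded by $\max_{\lambda\in\Phi}\absvalue{\lambda(\gamma)}_{\sigma_{0}}\leq e^{\ell(\gamma)}$, directly from the length formula. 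Combined with $D\leq c\,\sqIndex{\fieldk}{\Q}$ (with $c$ depending on $\bfG$ through the splitting field of $\bfT$), this yields
\[
\ell(\gamma)\geq c_{1}\sqIndex{\fieldk}{\Q}\cdot D^{-1/2}(\log 3D)^{-\kappa}\geq C_{1}^{1/2}\sqIndex{\fieldk}{\Q}^{1/2}(\log\sqIndex{\fieldk}{\Q})^{-C_{2}/2}.
\]

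The main subtlety is the Galois-theoretic bookkeeping in the height-to-length step: one must carefully match the extensions $\widetilde{\sigma}_{0}|\sigma_{0}$ with evaluations of other roots at $\gamma$ via the Galois action on the (possibly non-split) torus $\bfT$, and verify that the compactness of $\bfG$ at all remaining archimedean factors truly annihilates every other contribution to the Weil height. Once this is in place, the decisive gain is that Amoroso--David for $r\geq 2$ independent coordinates improves the Dobrowolski exponent $D^{-1}$ to $D^{-1/2}$, which is exactly what produces the extra factor $\sqIndex{\fieldk}{\Q}^{1/2}$ in the final bound.
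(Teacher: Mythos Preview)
Your argument is correct and follows essentially the same strategy as the paper's proof: produce two multiplicatively independent eigenvalues of $\Ad\gamma$ from the hypothesis $\rk S\geq 2$, apply Amoroso--David in dimension~$2$, and convert the resulting height bound into a bound on $\ell(\gamma)$ using that only the non-compact place contributes. The only cosmetic differences are that you construct the independent pair directly (the paper proves the contrapositive, showing pairwise dependence forces $\rk\bfS=1$) and carry out the height-to-length step by hand, whereas the paper routes it through the quantity $\mahler(\gamma)$ and \cref{lemma:DegLB}/\cref{proposition:3.4}.
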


\cref{theorem:D} will also be proved in \cref{section:5}. Just as for \cref{theorem:C}, the lower bound improves as the degree of the trace field grows.
% There is a more precise version of theorem D with the signature for semisimple groups.

\subsection{Some ideas of the proofs}

The proof of \cref{theorem:B} relies on lower bounds on heights of tuples of multiplicatively independent algebraic units due to Amoroso and David \cite{AmorosoDavid1999}. Let \(\fieldk\) be the trace field of \(\Gamma\) and let \(\gamma\in \Gamma\) be a non-torsion semisimple conjugacy class intersecting an explicit small neighbourhood \(U\) of identity in \(G\). The bounds in \cite{AmorosoDavid1999} are used to show that the eigenvalues of \(\gamma\) are multiplicatively dependent. The Galois group of \(\fieldk\) acts on this set of eigenvalues, so their multiplicative dependence can be leveraged to construct a \(\Q^\times\)-valued Galois cocycle over this action. We prove that this cocycle takes values in the torsion subgroup of \(\Q^{\times}\), which is \(\{\pm 1\}\), and use it to construct an explicit algebraic number \(\tilde{\alpha}\), expressed as a product of eigenvalues of \(\gamma\). The signature of \(\tilde{\alpha}\) lies in the range \(\set{(s,t)\tst 0\leq t\leq r_{2}\tand 1\leq s+2t\leq r_{1}+2r_{2}}\), and its Mahler measure can be made arbitrarily small, provided that \(U\) is small enough. It transpires that the failure of \hyperref[Margulis-conjecture']{Margulis conjecture} for \(G\) would imply the failure of Lehmer's conjecture for at least one signature \((s,t)\) in the specified range.

\subsection{Notations}
\label{subsection:notation}
Throughout the paper,
\(\fieldk\) stands for a number field, \(\bfG\) is a semisimple \(\fieldk\)-group and \(\bfT\) is a \(\fieldk\)-torus containing a semisimple element \(\gamma\in \bfG(\fieldk)\). 
The set of valuations of \(\fieldk\) is denoted \(V_{\fieldk}\), and for any embedding \(\sigma:\fieldk\into\C\), we denote by \(\fieldk_{\sigma}\) the corresponding completion.
For any Galois extension \(\fieldk'/\fieldk\), its Galois group is denoted \(\Gal(\fieldk'/\fieldk)\).
We occasionally use Vinogradov's notation \(f\ll g\), which means that \(f\) is bounded by a constant times \(g\).
\(\N^{*}\) stands for the set of positive natural numbers.
All logarithms are natural. 
\subsection*{Acknowledgments}

Both authors would like to thank Emmanuel Breuillard for pointing them to the paper of Amoroso and David and suggesting that \cref{theorem:C} may hold.
The second named author would also like to thank Elon Lindenstrauss for many  helpful conversations and interesting questions related to the results of this paper. 

%We record that as a corollary we get an extension of Sury's theorem of all simple real groups:

%\begin{theoremA}
%Let \(G\) be a simple, simply connected real algebraic group. Then, the Margulis conjecture for \(G\) is equivalent to the Salem conjecture.
%\end{theoremA}

% The main result of this paper focuses on \emph{simple} \(\R\)-groups of rank \(\geq 2\).

% \begin{theoremA}

% \cref{Sury-conjecture} implies \cref{Margulis-conjecture} for connected simple \(\R\)-groups.
% \end{theoremA}

\section{Algebraic Preliminaries}
\label{section:2}

In this section, we gather some relevant facts about algebraic groups, arithmetic lattices, and Diophantine geometry. For the remainder of the paper, \(G\) will be the real (semi)simple Lie group without compact factors appearing in \crefrange{theorem:A}{theorem:D}. We will write \(X\) for the symmetric space associated to \(G\), equipped with the canonical \(G\)-invariant Riemannian metric induced by the Killing form on its Lie algebra. The meaning of \(\Gamma\), \(\fieldk\), \(\bfG\), \(\bfT\) established below will be fixed throughout the paper.

\subsection{Arithmetic lattices}

Let \(\fieldk\) be a number field and let \(\bfG\) be a simply connected, semisimple algebraic \(\fieldk\)-group such that \(\bfG(\fieldk\otimes \R)\simeq G\times G'\), where \(G'\) is compact. Note that this means that \(\dim \bfG\) is uniformly bounded in terms of \(\dim G\). Let \(\Gamma\subset \bfG(\fieldk)\) be an \emph{arithmetic subgroup}, by which we mean a subgroup such that \(\rho(\Gamma)\) is commensurable with \(\rho(\bfG(\fieldk))\cap \GL_{m}(\cO_{\fieldk})\), where \(\rho:\bfG\into\GL_{m}\) is a representation  defined over \(\fieldk\) with finite kernel. The commensurability class of \(\Gamma\) does not depend on the chosen representation.

The group \(\Gamma\), diagonally embedded in \(\bfG(\fieldk\otimes\R)\) using the inequivalent archimedean places of \(\fieldk\), projects to a lattice in \(G\) which we shall denote by the same letter. Up to commensurability, all arithmetic lattices in \(G\) arise in this way. The field \(\fieldk\) is called the \emph{trace field} of \(\Gamma\). Finally, we remark that \(\Gamma\) is irreducible if \(\bfG\) is a simple \(\fieldk\)-group. For more details on arithmetic lattices, we refer the reader to \cite{Borel1969,Margulis1991,PlatonovRapinchuk1994}.\nocite{Borel2019}

\subsection{Characteristic polynomials}

From now on let \(\rho=\Ad\) be the adjoint representation of \(\bfG\). For any \(g\in \bfG(\fieldk)\), let \(f_{g}\) be the characteristic polynomial of \(\Ad(g)\), and assuming that \(g\) is semisimple, let \(\bfT\) be a maximal \(\fieldk\)-torus containing \(g\). We denote by \(\Characters(\bfT)\) the group of characters
\[
\Characters(\bfT)=\Hom\big(\bfT_{\AC{\fieldk}},\Gm[\AC{\fieldk}]\big),
\]
where \(\Gm\) denotes the multiplicative group over \(\fieldk\), and \(\AC{\fieldk}\) the algebraic closure of \(\fieldk\).

Let \(\Phi(\bfG,\bfT)\) denote the absolute root system of \(\bfG\) relative to \(\bfT\). If \(g\in \bfT(\AC{\fieldk})\), the polynomial \(f_{g}(t)\) is
\[
f_{g}(t)= t^{\rk\bfG} \prod_{\chi\in\Phi(\bfG,\bfT)} (t-\chi(g))^{m_{\chi}}\qtforall
t\in \bfT(\AC{\fieldk}),
\]
where \(\set{m_{\chi}\in\N^{\ast}\tst \chi\in\Omega}\) are the dimensions of \(\chi\)-isotypic subspaces of \(\frg\). The root system generates a finite-index subgroup of \(\Characters(\bfT)\).

%Let \(\Omega\subseteq\Characters(\bfT)\) be the set of \emph{weights} arising from the representation \(\Ad|_{\bfT}\): there is a set \(\set{m_{\chi}\in\N^{\ast}\tst \chi\in\Omega}\) of positive integers such that
%\[
%f_{t}(\rmX)\defeq \det\big(\rmX\cdot 1-\Ad(t)\big)=\prod_{\chi\in\Omega} \big(\rmX-\chi(t)\big)^{m_{\chi}},\qtforall t\in \bfT(\AC{\fieldk}),
%\]
%and for \(g\in \bfT(\AC{\fieldk})\), the set of roots of \(f_{g}\) in \(\AC{\fieldk}\) is
%\[
%\Omega (g)\defeq\set{\chi(g)\tst \chi\in\Omega}
%\]
%(and \(m_{\chi}\) is the multiplicity of the root \(\chi(g)\)). The group \(\<\Omega \>\) is of finite index in \( \Characters(\bfT)\).

%\subsection{Root system}%

%If \(\bfG\) is a semisimple \(\fieldk\)-group, and \(\bfT\subset \bfG\) is a maximal \(\fieldk\)-torus, then \(\bfT_{\sep}\defeq\bfT_{\fieldk_{s}}\) is a split maximal torus in \(\bfG_{\sep}\defeq\bfG_{\fieldk_{s}}\), and we define the \emph{root system} of \(\bfG\) to be the root system of \(\bfG_{\sep}\).

\subsection{Splitting fields of tori}

We recall some facts about splitting fields of tori (for more details, see, e.g., \cite[\S 1]{JouveKowalskiZywina2013}).

We keep the previous notation. The natural left action of \(\Gal(\AC{\fieldk}/\fieldk)\) on \(\Characters(\bfT)\) is given by
\begin{equation}
\label{eq:2.3}
\sigma\big(\chi(t)\big)=(\presup{\sigma}{\chi})\big(\sigma(t)\big),\qtfor \sigma \in\Gal(\AC{\fieldk}/\fieldk),~\chi\in\Characters(\bfT),~t\in \bfT(\AC{\fieldk}).
\end{equation}
Equivalently,
\begin{equation*}
\label{eq:2.4}
\presup{\sigma}{\chi}(t)=\sigma\big(\chi\big(\sigma^{-1}(t)\big)\big),\qtfor \sigma \in\Gal(\AC{\fieldk}/\fieldk),~\chi\in\Characters(\bfT),~t\in \bfT(\AC{\fieldk}).
\end{equation*}

Let \(\fieldk_{\bfT}\) denote the \emph{splitting field} of \(\bfT\), that is, the minimal extension \(\fieldK\subset \AC{\fieldk}\) such that \(\bfT_{\fieldK}\) is split, i.e., \(\bfT_{\fieldK}\simeq (\Gm[\fieldK])^{r}\) for some \(r\in\N^{\ast}\). Equivalently, this is the minimal extension \(\fieldK\) of \(\fieldk\) for which \(\Gal(\AC{\fieldk}/\fieldK)\) acts trivially on \(\Characters(\bfT)\); this is a Galois extension of \(\fieldk\).
%Observe that if \(\sigma,\tau\in\Gal(\AC{\fieldk})\), then
%\begin{align*}%
%\presup{\sigma\tau}{\chi}(t)
%=\sigma\tau\big(\chi\big(\tau^{-1}\sigma^{-1}(t)\big)\big)
%=\sigma\big(\presup{\tau}{\chi}(\sigma^{-1}t)\big)
%=\presup{\sigma}{(\presup{\tau}{\chi})}(t),
%\end{align*}
%so that \(\presup{\sigma\tau}{\chi}=\presup{\sigma}{(\presup{\tau}{\chi})}\), and
We have a representation \(\AGal{\fieldk}\to \Aut(\Characters(\bfT))\) which factors through an injective homomorphism \(\Gal(\fieldk_{\bfT}/\fieldk)\into \Aut(\Characters(\bfT))\). As a result, the Galois group
\[
\scrG_{\bfT}\defeq\Gal(\fieldk_{\bfT}/\fieldk)
\]
acts on the (absolute) root system \(\Phi=\Phi(\bfG,\bfT)\) by automorphisms, and hence, \(\scrG_{\bfT}\) may be identified with a subgroup of \(\Aut(\Phi)\). It follows that
\begin{equation*}
\absvalue{\scrG_{\bfT}}\leq \absvalue{\Aut(\Phi)}\leq (\dim\bfG-\rk\bfG)!.
\end{equation*}
% If \(\bfT\) is a maximal torus of \(\bfG\), defined over \(\fieldk\), let \(\Weyl(\bfG,\bfT)\) be the Weyl group of \((\bfG,\bfT)\), that is, the \(\fieldk\)-valued points of \(\Normalizer[\bfG](\bfT)/\Zentralisator[\bfG](\bfT)\). Up to isomorphism, it is independent of \(\bfT\). In addition, the subgroup of \(\Aut(\Characters(\bfT))\)
%
% \[
% \Pi(\bfG,\bfT)\defeq \< \Weyl(\bfG,\bfT)\cup \varphi_{\bfT}(\AGal{\fieldk})\>
%
% \]
% is also, up to isomorphism, independent of \(\bfT\) \cite[see][Proposition 2.1]{JouveKowalskiZywina2013}. As a result, \(\Pi(\bfG)\) and \(\Weyl(\bfG)\) depend only on \(\bfG\).

\begin{remark*}
The degree of the splitting field of \(\bfT\) is also equal to the order of the \emph{splitting group} \cite{Voskresenskii1998}, a subgroup of \(\GL_{d}(\Z)\), where \(d=\dim(\bfT)\), and its order is bounded in terms of \(\dim \bfG\) only \cite{UllmoYafaev2014,Friedland1997}. So for any \(d\in\N\), there exists a constant \(\beta(d)\in\N\) such that
\[
\absvalue{\scrG_{\bfT}}=
[\fieldk_{\bfT}:\fieldk]\leq \beta(d),\qtif \rk(\bfG)\leq d.
\]
\end{remark*}

\subsection{Splitting fields of elements}

%We now fix a faithful representation \(\rho:\bfG\into \GL_{n}\) over \(\fieldk\), if \(g\in \bfG(\fieldk)\),
Let \(f_{g}\) denote the characteristic polynomial of \(\Ad(g)\in\GL_{n}(\fieldk)\).
%It follows from the work of Jouve, Kowalski, and Zywina \cite{JouveKowalskiZywina2013} that the splitting field over \(\fieldk\) of \(f_{g}\) does not depend on the choice of \(\rho\) so
We denote the splitting field of \(f_{g}\) over \(\fieldk\) by \(\fieldk_{g}\). If \(g_{\sspart}\) denotes the semisimple part of \(g\), then \(\fieldk_{g}=\fieldk_{g_{\sspart}}\) \cite[\S 2.3]{JouveKowalskiZywina2013}. In addition,
\begin{equation}
\fieldk_{g}=\fieldk\big(\set{\chi(g) \tst \chi\in\Phi(\bfG,\bfT)}\big).
\end{equation}

\begin{lemma*}
For any semisimple \(g\in \bfG(\fieldk)\), we have \(\fieldk_{\bfT}\supseteq\fieldk_{g}\) and thus, \([\fieldk_{g}:\fieldk]\leq [\fieldk_{\bfT}:\fieldk]\).
\end{lemma*}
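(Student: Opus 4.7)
The plan is straightforward and rests on the identification of the roots of $f_{g}$ with the values $\chi(g)$ recorded just above the lemma, together with the defining property of the splitting field $\fieldk_{\bfT}$, namely that $\Gal(\AC{\fieldk}/\fieldk_{\bfT})$ acts trivially on $\Characters(\bfT)$.

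First, I would note that since $g$ is semisimple, it lies in the maximal $\fieldk$-torus $\bfT$, and by the factorization
\[
f_{g}(t)=t^{\rk\bfG}\prod_{\chi\in\Phi(\bfG,\bfT)}(t-\chi(g))^{m_{\chi}}
\]
the splitting field of $f_{g}$ over $\fieldk$ is generated by the values $\chi(g)$, i.e.\ $\fieldk_{g}=\fieldk(\{\chi(g)\mid \chi\in\Phi(\bfG,\bfT)\})$ as already displayed. Hence it suffices to show that every such $\chi(g)$ lies in $\fieldk_{\bfT}$.

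To do this, I would fix $\sigma\in\Gal(\AC{\fieldk}/\fieldk_{\bfT})$ arbitrary and apply the Galois action formula \eqref{eq:2.3}. By definition of $\fieldk_{\bfT}$, the group $\Gal(\AC{\fieldk}/\fieldk_{\bfT})$ acts trivially on $\Characters(\bfT)$, so $\presup{\sigma}{\chi}=\chi$. Moreover $g\in\bfG(\fieldk)\subseteq\bfG(\fieldk_{\bfT})$ is fixed by $\sigma$, so $\sigma(g)=g$. Combining these,
\[
\sigma(\chi(g))=(\presup{\sigma}{\chi})(\sigma(g))=\chi(g).
\]
Since $\sigma$ was arbitrary, $\chi(g)$ is fixed by $\Gal(\AC{\fieldk}/\fieldk_{\bfT})$, hence lies in $\fieldk_{\bfT}$.

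This shows $\fieldk_{g}\subseteq \fieldk_{\bfT}$, and the degree inequality $[\fieldk_{g}:\fieldk]\leq[\fieldk_{\bfT}:\fieldk]$ follows from the tower law. There is no serious obstacle here; the only subtlety worth flagging is making sure to invoke both that $g$ is $\fieldk$-rational (hence $\fieldk_{\bfT}$-rational) \emph{and} that every absolute root becomes $\fieldk_{\bfT}$-rational once $\bfT$ splits, so that both factors in \eqref{eq:2.3} are $\sigma$-invariant.
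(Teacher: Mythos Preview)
Your proof is correct and follows essentially the same route as the paper: both arguments use the Galois action formula \eqref{eq:2.3} together with $\sigma(g)=g$ (since $g\in\bfG(\fieldk)$) and $\presup{\sigma}{\chi}=\chi$ for $\sigma\in\Gal(\AC{\fieldk}/\fieldk_{\bfT})$ to conclude that each $\chi(g)$ is fixed by $\Gal(\AC{\fieldk}/\fieldk_{\bfT})$, hence lies in $\fieldk_{\bfT}$. The only cosmetic difference is that the paper first records the identity $\sigma(\chi(g))=\presup{\sigma}{\chi}(g)$ for all $\sigma\in\Gal(\AC{\fieldk}/\fieldk)$ before specializing, whereas you restrict to $\Gal(\AC{\fieldk}/\fieldk_{\bfT})$ from the outset.
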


\begin{proof}
For any \(\sigma\in\AGal{\fieldk}\), and \(\chi\in\Phi(\bfG,\bfT)\), we have by \eqref{eq:2.3} that
\begin{equation}
\label{eq:2.7}
\sigma\big(\chi(g)\big)=\presup{\sigma}{\chi}\big(\sigma(g)\big)=\presup{\sigma}{\chi}(g),
\end{equation}
since \(g\in \bfG(\fieldk)\). In particular, since \(\Gal(\AC{\fieldk}/\fieldk_{\bfT})\) acts trivially on \(\Characters(\bfT)\), this implies that \(\presup{\sigma}{\chi}=\chi\) for all \(\sigma\in\Gal(\AC{\fieldk}/\fieldk_{\bfT})\), so
\[
\sigma(\chi(g))=\chi(g)\qtforall \sigma\in \Gal(\AC{\fieldk}/\fieldk_{\bfT}),
\]
or, equivalently, that \(\sigma(u)=u\) for all \(u\in \set{\chi(g) \tst \chi\in\Phi(\bfG,\bfT)}\), so it follows that \(\fieldk_{\bfT}\supseteq\fieldk_{g}\).
\end{proof}

\subsection{Mahler measure and heights of elements in \(\Gamma\)}

We now let \(\gamma\in \Gamma\) be semisimple. Recall that we are working with the adjoint representation \(\Ad\) of \(\bfG\) on its Lie algebra \(\frg\). Let \(\bfT\) be a maximal \(\fieldk\)-torus containing \(\gamma\) and let \(\Phi=\Phi(\bfG,\bfT)\) be the corresponding absolute root system.
% Then, the eigenvalues of \(\Ad\gamma\) are \(\chi(\gamma)\) for \(\chi\in\Phi(\bfG,\bfT)\), and \(1\) with multiplicity \(\dim\bfT\).

Since \(\Gamma\) is arithmetic, the eigenvalues of \(\Ad\gamma\) are algebraic integers, and in particular,
\[
\tag{\(\star\)}
\chi(\gamma)\in \cO_{\fieldk_{\gamma}}
\qtext{for any} \chi\in\Phi.
\]
%\cite[cf.][IX.4.17(iii), for instance]{Margulis1991}.
%(see the appendix for more details). Indeed, there is a finite-index subgroup \(\Gamma_{0}\) of \(\Gamma\) and a finitely generated \(\cO_{\fieldk}\)-module \(\cL\) in the Lie algebra \(\frg(\fieldk)\) of \(\bfG(\fieldk)\) preserved by \(\Ad\Gamma_{0}\), and \(\Ad\Gamma_{0}\) is definable over \(\cO_{\fieldk}\).
We now define the \emph{Mahler measure} of \(\gamma\) to be
\begin{equation}
\Mahler(\gamma)\defeq \prod_{\chi\in\Phi} \Mahler\big(\chi(\gamma)\big),
\end{equation}
where the Mahler measure of \(\chi(\gamma)\) is the Mahler measure of its minimal polynomial over \(\Z\). The \emph{logarithmic Mahler measure} of \(\gamma\) is
\begin{equation}
\mahler(\gamma)\defeq\log\Mahler(\gamma)=\sum_{\chi\in\Phi} \mahler\big(\chi(\gamma)\big).
\end{equation}
We also define the \emph{logarithmic height} of \(\gamma\) to be
\begin{equation}
\Weilheight(\gamma)\defeq \sum_{\chi\in\Phi} \Weilheight\big(\chi(\gamma)\big),
\end{equation}
where, for an algebraic number \(\alpha\), we denote by \(\Weilheight(\alpha)\) its \emph{absolute logarithmic Weil height} \cite{BombieriGubler2006}.

\begin{lemma*}
Let \(\gamma\in \bfG(\cO_{\fieldk})\). Let \(f_{\gamma}(X)\) be the characteristic polynomial of \(\Ad\gamma\). Then,
\[
\mahler(f_{\gamma})\leq \mahler(\gamma)\qand
\Weilheight(\gamma)\leq \mahler(\gamma) \leq [\fieldk_{\gamma}:\Q]\cdot \Weilheight(\gamma).
\]
%where \(\Weilheight(\gamma)\) and \(\mahler(\gamma)\) denote the logarithmic height and logarithmic Mahler measure of \(\gamma\).
\end{lemma*}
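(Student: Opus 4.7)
The plan rests on two observations: first, for the adjoint representation the root spaces are one-dimensional, so $m_{\chi}=1$ for every $\chi\in\Phi$ in the factorization of $f_{\gamma}$ recalled earlier in the paper; second, for any algebraic number $\alpha$ one has the classical identity $\mahler(\alpha)=[\Q(\alpha):\Q]\cdot\Weilheight(\alpha)$. Together these reduce both claims to term-wise comparisons indexed by $\chi\in\Phi$.

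For the inequality $\mahler(f_{\gamma})\leq\mahler(\gamma)$, I would fix an archimedean embedding $\sigma_{0}$ of $\fieldk$ realizing the factor $G$ in $\bfG(\fieldk\otimes\R)$, extend it to $\fieldk_{\gamma}\into\C$, and evaluate $\mahler(f_{\gamma})$ through this embedding. Since $\gamma\in\bfG(\cO_{\fieldk})$, the polynomial $f_{\gamma}$ is monic, and the contribution of the $t^{\rk\bfG}$ factor vanishes under $\log\max(1,\cdot)$, leaving
\[
\mahler(f_{\gamma})=\sum_{\chi\in\Phi}\log\max\big(1,|\chi(\gamma)|_{\sigma_{0}}\big).
\]
For each $\chi\in\Phi$, the restriction of $\sigma_{0}$ to $\Q(\chi(\gamma))$ is one of the $[\Q(\chi(\gamma)):\Q]$ complex embeddings appearing in $\mahler(\chi(\gamma))=\sum_{\sigma}\log\max(1,|\sigma(\chi(\gamma))|)$. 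As all these summands are non-negative, $\log\max(1,|\chi(\gamma)|_{\sigma_{0}})\leq\mahler(\chi(\gamma))$, and summing over $\chi\in\Phi$ gives the first inequality.

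For the chain $\Weilheight(\gamma)\leq\mahler(\gamma)\leq[\fieldk_{\gamma}:\Q]\cdot\Weilheight(\gamma)$, I would apply the identity $\mahler(\alpha)=[\Q(\alpha):\Q]\cdot\Weilheight(\alpha)$ to each $\alpha=\chi(\gamma)$. Since $\chi(\gamma)\in\fieldk_{\gamma}$, we have $1\leq[\Q(\chi(\gamma)):\Q]\leq[\fieldk_{\gamma}:\Q]$, hence term-wise
\[
\Weilheight(\chi(\gamma))\leq\mahler(\chi(\gamma))\leq[\fieldk_{\gamma}:\Q]\cdot\Weilheight(\chi(\gamma)).
\]
Summing over $\chi\in\Phi$ and invoking the additive definitions of $\mahler(\gamma)$ and $\Weilheight(\gamma)$ produces both inequalities simultaneously.

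The main (and really only) subtlety is to distinguish the local Mahler measure of the polynomial $f_{\gamma}$, computed through the single chosen archimedean embedding $\sigma_{0}$, from the absolute Mahler measures of the individual algebraic integers $\chi(\gamma)$, which sum over all their Galois conjugates over $\Q$. Once this distinction is kept in mind the argument becomes routine bookkeeping, and the gap between the two bounds on $\mahler(\gamma)$ in terms of $\Weilheight(\gamma)$ reflects the degree $[\fieldk_{\gamma}:\Q]$ and the sizes of the Galois orbits of the $\chi(\gamma)$.
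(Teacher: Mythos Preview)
Your proof is correct and follows essentially the same route as the paper's. Both arguments reduce the first inequality to the term-wise bound $\log^{+}|\chi(\gamma)|\leq\mahler(\chi(\gamma))$ (the paper phrases this as $\Mahler((t-\alpha))\leq\Mahler(\alpha)$, you phrase it as ``$\sigma_{0}$ is one of the embeddings in the sum''), and both obtain the second chain from the identity $\mahler(\alpha)=[\Q(\alpha):\Q]\cdot\Weilheight(\alpha)$ together with $1\leq[\Q(\chi(\gamma)):\Q]\leq[\fieldk_{\gamma}:\Q]$, summed over $\chi\in\Phi$. Your explicit remarks about $m_{\chi}=1$ and the dependence of $\mahler(f_{\gamma})$ on the chosen embedding are points the paper leaves implicit.
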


\begin{proof}
Let \(\alpha\in\AC{\Q}\), with minimal polynomial \(f_{\alpha}\) over \(\Z\) (so if \(\alpha\) is not an algebraic integer, \(f_{\alpha}\) is not necessarily monic, but if \(\alpha\) is an algebraic integer, it is). Then, the Mahler measure and the height of \(\alpha\) are related by the following identity \cite{BombieriGubler2006}:
\begin{equation}
\mahler(\alpha)=\mahler(f_{\alpha})=[\Q(\alpha):\Q]\cdot \Weilheight(\alpha).
\end{equation}
On the other hand, for each algebraic number \(\alpha\), we have
\[
\max\{1,\absvalue{\alpha}\}=\Mahler((t-\alpha))\leq \Mahler(\alpha).
\]
With \(\log^{+}(x)\defeq \max\{0,\log(x)\}\), we get \(\log^{+}|\alpha|\leq \mahler(\alpha)\). Since each \(\chi(\gamma)\) is an algebraic integer, we have for each \(\chi(\gamma)\),
\[
\mahler\big(\chi(\gamma)\big)=[\Q\big(\chi(\gamma)\big):\Q]\cdot \Weilheight\big(\chi(\gamma)\big)
\]
so
\begin{equation}
\mahler\big(\chi(\gamma)\big)\leq [\fieldk_{\gamma}:\Q]\cdot \Weilheight\big(\chi(\gamma)\big).
\end{equation}
Therefore,
\[
\mahler(f_{\gamma})
=\sum_{\chi\in\Phi} \log^{+}\absvalue{\chi(\gamma)}
\leq
\sum_{\chi\in\Phi} \mahler\big(\chi(\gamma)\big)
=\mahler(\gamma),
\]
proving the first inequality. For the second inequality, for each \(\chi(\gamma)\),
\[
\Weilheight\big(\chi(\gamma)\big)\leq\mahler\big(\chi(\gamma)\big)\leq [\fieldk_{\gamma}:\Q]\cdot \Weilheight\big(\chi(\gamma)\big),
\]
so summing over \(\chi\in\Phi\),
\[
\Weilheight(\gamma)\leq\mahler(\gamma)\leq[\fieldk_{\gamma}:\Q]\cdot \Weilheight(\gamma).\qedhere
\]
\end{proof}

The previous lemma shows that if \(\gamma\) has small Mahler measure or small height, then the characteristic polynomial of \(\Ad\gamma\) also has small Mahler measure.

\begin{remark}
There are several definitions of height and Mahler measure for \(\gamma\) that may be possible (see \cite{Breuillard2011} for instance). Instead of combining the Mahler measures of the \(\chi(\gamma)\)'s, one could also simply define \(\mahler(\gamma)=[\fieldk:\Q]\cdot\Weilheight(\gamma)\) as \(\gamma\in\bfG(\fieldk)\). However, the \(\chi(\gamma)\)'s are not in general in \(\fieldk\), but in \(\fieldk_{\gamma}\).
\end{remark}

\subsection{The Lehmer problem in higher dimensions}

To prove our main theorem, we will need some additional tools from Diophantine geometry. Amoroso and David \cite{AmorosoDavid1999} proved a higher-dimensional version of Drobrowolski' theorem \cite{Dobrowolski1979}. The following corollary of their result will be sufficient for our purposes. Let
\[
\kappa(n)=(n+1)\big[(n+1)!\big]^{n}-n.
\]

\begin{theorem}[Amoroso-David {\cite[Theorem 1.6]{AmorosoDavid1999}}]
\label{theorem:Amoroso-David}
For any \(n\geq 1\), there exists a real number \(c(n)>0\), such that for any element \(\bfalpha\) of \(\Gm^{n}(\SepClosure{\Q})\), whose coordinates are multiplicatively independent, we have
\[
\prod_{i=1}^{n}\Weilheight(\alpha_{i})\geq\frac{c(n)}{D}\log(3D)^{-n\kappa(n)},
\qtwhere
D=[\Q(\alpha_{1},\hdots, \alpha_{n}):\Q].
\]
\end{theorem}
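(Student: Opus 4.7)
The plan is to follow the transcendence method of Dobrowolski, adapted to $\Gm^{n}$, combining an auxiliary polynomial construction, a Frobenius extrapolation, and a zero estimate. Assume, for contradiction, that $\bfalpha = (\alpha_{1}, \ldots, \alpha_{n})$ has multiplicatively independent coordinates in a number field of degree $D = [\Q(\bfalpha):\Q]$ and satisfies
\[
\prod_{i=1}^{n} \Weilheight(\alpha_{i}) < \frac{c(n)}{D}\log(3D)^{-n\kappa(n)}.
\]
First, I would apply Siegel's lemma over $\cO_{\Q(\bfalpha)}$ to produce a nonzero polynomial $P \in \Z[X_{1},\ldots,X_{n}]$ of carefully chosen multi-degree $(L_{1},\ldots,L_{n})$, with $L_{i}$ roughly proportional to $T/\Weilheight(\alpha_{i})$ for a parameter $T$, that vanishes at $\bfalpha$ to order $T$. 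The number of coefficients of $P$ is $\prod(L_{i}+1)$, the number of vanishing conditions is controlled by $T^{n}$ and $D$, and the height of $P$ is controlled by $T$, the $L_{i}$, and the product formula.

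The next step is extrapolation. Choose a family of rational primes $p_{1}<\cdots<p_{s}$, unramified in the Galois closure of $\Q(\bfalpha)$ and of size $(\log D)^{O(1)}$. For each such $p$ and each prime $\mathfrak{p}$ of $\Q(\bfalpha)$ above it, the Frobenius congruence $\sigma_{\mathfrak{p}}(\alpha_{i}) \equiv \alpha_{i}^{p} \pmod{\mathfrak{p}}$, together with the smallness of $\prod\Weilheight(\alpha_{i})$, forces $P$ to vanish to a prescribed positive order at each twisted point $(\alpha_{1}^{p_{k}},\ldots,\alpha_{n}^{p_{k}})$. Iterating over the $p_{k}$ yields a large set $\Sigma \subset \Gm^{n}(\SepClosure{\Q})$ of multiplicity-weighted zeros of $P$.

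I would then invoke Philippon's multihomogeneous zero estimate on $\Gm^{n}$: the weighted count of points in $\Sigma$ exceeds what a polynomial of multi-degree $(L_{1},\ldots,L_{n})$ can accommodate unless $\Sigma$ is contained in a proper algebraic subgroup of $\Gm^{n}$, i.e., satisfies a nontrivial monomial relation $\prod_{i} X_{i}^{a_{i}} = \zeta$ with exponent vector $(a_{1},\ldots,a_{n})$ of controlled size. Pigeon-holing over the finitely many such exponent tuples and specializing to the powers $(\alpha_{1}^{p_{k}},\ldots,\alpha_{n}^{p_{k}})$ produces a nontrivial multiplicative relation among the $\alpha_{i}$, contradicting the hypothesis. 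Balancing the parameters $T$, the multidegrees $L_{i}$, and the primes $p_{k}$ yields the stated lower bound, with the factor $\log(3D)^{-n\kappa(n)}$, where $\kappa(n) = (n+1)[(n+1)!]^{n} - n$, tracking precisely the loss inherent in Philippon's estimate.

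The main obstacle is the zero-estimate step in dimension $n>1$: unlike the classical one-dimensional Dobrowolski argument, where zeros are handled by factoring out linear terms, in $\Gm^{n}$ one needs an effective description of the minimal algebraic subgroup containing the exceptional locus, together with a sharp bound on its degree and its index of obstruction. The combinatorial factor $[(n+1)!]^{n}$ in $\kappa(n)$ reflects exactly this geometric loss, and any improvement of the theorem amounts to sharpening either Philippon's zero estimate or the Siegel-type auxiliary construction underlying it.
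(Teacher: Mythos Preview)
The paper does not prove this statement at all: it is quoted verbatim from Amoroso--David \cite[Theorem~1.6]{AmorosoDavid1999} and used as a black box in the proof of \cref{proposition:3.4}. There is therefore no ``paper's own proof'' to compare your proposal against.

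That said, your sketch is a reasonable high-level summary of the actual Amoroso--David argument: an auxiliary polynomial built via a Siegel-type lemma, a Frobenius extrapolation over small primes to propagate vanishing to the points \((\alpha_{1}^{p},\ldots,\alpha_{n}^{p})\), and a multiprojective zero estimate in the style of Philippon to force a multiplicative relation. Two cautions if you intend to flesh this out. First, the parameter balancing is considerably more delicate than in Dobrowolski's one-variable case: the degrees \(L_{i}\) are not simply proportional to \(T/\Weilheight(\alpha_{i})\), and an interpolation determinant or a refined Siegel lemma (Bombieri--Vaaler type, with attention to the archimedean places) is needed to control the height of \(P\). Second, the zero estimate does not directly produce a single multiplicative relation among the \(\alpha_{i}\); rather it yields an obstruction subvariety whose degree and index must be bounded, and one then argues inductively on the dimension of this subgroup. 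The exponent \(\kappa(n)\) arises from iterating this descent, not from a single application of Philippon's estimate. As written, your outline would not close without substantial additional work at both of these points, but it is pointed in the right direction.
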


\section{Multiplicative dependence of roots}
\label{section:3}

In this section and the next, we prove \cref{theorem:A,theorem:B}. We keep the notations of \cref{section:2}.
%The idea of the proof is as follows: let \(\epsilon>0\) and pick \(\gamma\in\Gamma\) to be an element of infinite order and such that \(\Weilheight(\gamma)\leq\epsilon\). %We will construct, for \(\epsilon\) small enough, a Salem number \(\xi\) such that \(\mahler(\xi)\leq \mahler(\gamma)\).
The following proposition constitutes the main ingredient to prove \crefrange{theorem:A}{theorem:D}:

\begin{proposition}
\label{proposition:3.1}
For every \(\epsilon>0\), there exist \(N\in\N\) such that the following holds:
\begin{starenv}
Let \(\gamma\in\Gamma\) be of infinite order. If \(\sqIndex{\fieldk}{\Q}\geq N\) and \(\mahler(\gamma)\leq \epsilon\), then for any distinct \(\chi,\chi'\in\Phi\), if neither \(\chi(\gamma)\) nor \(\chi'(\gamma)\) is a root of unity, then \(\chi(\gamma)\) and \(\chi'(\gamma)\) are \emph{multiplicatively dependent}%
\footnote{The roots of unity are emphasized separately because we treat them differently in the proof.}.
\end{starenv}
\end{proposition}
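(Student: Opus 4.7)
I would argue by contradiction. Suppose there exist distinct $\chi,\chi' \in \Phi$ such that $\chi(\gamma)$ and $\chi'(\gamma)$ are both non-torsion and multiplicatively independent; the plan is to derive a quantitative contradiction once $\sqIndex{\fieldk}{\Q}$ is sufficiently large in terms of $\epsilon$. The central input is \cref{theorem:Amoroso-David} applied in dimension $n = 2$ to the pair $(\chi(\gamma),\chi'(\gamma)) \in \Gm^{2}(\SepClosure{\Q})$:
\[
\Weilheight(\chi(\gamma))\,\Weilheight(\chi'(\gamma)) \geq \frac{c(2)}{D\log(3D)^{2\kappa(2)}}, \qquad D \defeq \sqIndex{\Q(\chi(\gamma),\chi'(\gamma))}{\Q}.
\]
Since both elements lie in $\fieldk_{\gamma} \subseteq \fieldk_{\bfT}$ and the splitting-group remark in \cref{section:2} bounds $\sqIndex{\fieldk_{\bfT}}{\fieldk}$ by a constant $\beta$ depending only on $\dim \bfG$, one obtains $D \leq \beta\sqIndex{\fieldk}{\Q}$.

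For the matching upper bound on the product of heights, I would combine the identity $\Weilheight(\chi(\gamma)) = \mahler(\chi(\gamma))/\sqIndex{\Q(\chi(\gamma))}{\Q}$ from the Lemma in \cref{section:2} with the Mahler-measure budget $\sum_{\psi \in \Phi}\mahler(\psi(\gamma)) = \mahler(\gamma) \leq \epsilon$. The Galois action of $\scrG_{\bfT}$ on $\Phi$ refines this: if the $\scrG_{\bfT}$-orbit of $\chi$ has size $n$, the $n$ values ${}^{\sigma}\chi(\gamma)$ are Galois-conjugate over $\fieldk$ (hence over $\Q$) and share the same Mahler measure, so $n\,\mahler(\chi(\gamma)) \leq \epsilon$; similarly $\sqIndex{\Q(\chi(\gamma))}{\Q} \geq \sqIndex{\fieldk(\chi(\gamma))}{\fieldk} = n$. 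Writing $m,m'$ and $n,n'$ for the respective degrees and orbit sizes, this yields $\Weilheight(\chi(\gamma))\Weilheight(\chi'(\gamma)) \leq \epsilon^{2}/(m m' n n')$; substituting into the Amoroso--David bound produces an inequality of the shape
\[
c(2)\,m m'\,n n' \leq \epsilon^{2}\,\beta\,\sqIndex{\fieldk}{\Q}\,\log\bigl(3\beta\sqIndex{\fieldk}{\Q}\bigr)^{2\kappa(2)}.
\]

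The main obstacle — and where the proof requires the most care — is ensuring that the left-hand side grows fast enough with $\sqIndex{\fieldk}{\Q}$ to overtake the merely polylogarithmic growth on the right. Dobrowolski's inequality (the $n=1$ case of \cref{theorem:Amoroso-David}) applied to each non-torsion $\chi(\gamma)$ gives only a lower bound $m,m' \gg_{\epsilon} 1$ independent of $\sqIndex{\fieldk}{\Q}$, which alone is insufficient. I expect the argument to close by exploiting the unit structure $\chi(\gamma) \in \cO_{\fieldk_{\gamma}}^{\times}$ together with the smallness of $\chi(\gamma)$ at every Archimedean place, forcing the Galois orbit of $\chi(\gamma)$ over $\Q$ — equivalently the degree $\sqIndex{\Q(\chi(\gamma))}{\Q}$ — to scale with $\sqIndex{\fieldk}{\Q}$. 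Executing this last step carefully is where the bulk of the technical work should lie, and it is what will make the displayed inequality impossible for $\sqIndex{\fieldk}{\Q} \geq N(\epsilon)$.
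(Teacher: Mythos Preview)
Your approach is essentially that of the paper: apply \cref{theorem:Amoroso-David} with $n=2$, bound $D$ above by $\beta\,[\fieldk:\Q]$, and convert heights to Mahler measures via the degrees $m=[\Q(\chi(\gamma)):\Q]$, $m'=[\Q(\chi'(\gamma)):\Q]$. The orbit-size parameters $n,n'$ you introduce are bounded by $|\scrG_{\bfT}|\leq|\Aut(\Phi)|$, a constant depending only on $\dim G$; they contribute nothing to the growth in $[\fieldk:\Q]$ and the paper does not use them.

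You correctly locate the crux: one needs $m,m'\gg [\fieldk:\Q]$. But the mechanism you propose --- ``smallness of $\chi(\gamma)$ at every Archimedean place'' --- is not what drives this; indeed the paper's \cref{lemma:DegLB} uses no hypothesis on $\mahler(\gamma)$ at all. The argument is purely structural: for each embedding $\sigma:\fieldk\hookrightarrow\C$ with $\bfG(\fieldk_\sigma)$ \emph{compact}, every eigenvalue of $\Ad\gamma$ in that factor lies on the unit circle, so $|\tau(\chi(\gamma))|=1$ for every extension $\tau$ of $\sigma$ to $\fieldk_\gamma$. Since the number of non-compact places $|V_{nc}|$ equals the number of simple factors of $G$ (hence is bounded in terms of $\dim G$), at most $|V_{nc}|\cdot[\fieldk_\gamma:\fieldk]$ of the $[\fieldk_\gamma:\Q]$ conjugates of $\chi(\gamma)$ over $\Q$ can lie off the unit circle; the inequality \eqref{eq:wlb} then gives $[\Q(\chi(\gamma)):\Q]\geq [\fieldk:\Q]/|V_{nc}|$. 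With this in hand, $mm'\geq C_0^{2}\,[\fieldk:\Q]^{2}$ dominates the right-hand side $\epsilon^{2}\beta\,[\fieldk:\Q]\,(\log 3\beta[\fieldk:\Q])^{2\kappa(2)}$ of your displayed inequality for $[\fieldk:\Q]$ large, and the proof closes exactly as you anticipated.
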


This will follow from the more quantitative \cref{proposition:3.4}. To prove it, we will use \cref{theorem:Amoroso-David}.

Let \(\Q\subset\fieldL\) be a Galois extension. Let \(\alpha\in\fieldL\). Define
\begin{equation}\label{eq:defw}
w_{\fieldL/\Q}(\alpha)\defeq \frac{\Set{\sigma\in\Hom_{\Q}(\fieldL,\C)\st \absvalue{\sigma(\alpha)}\neq 1}}{[\fieldL:\Q]}.
\end{equation}
This is the proportion of embeddings \(\fieldL\into\C\) mapping \(\alpha\) outside the unit circle.

Let \([\fieldL:\Q]=n\). Let \(\sigma_{1},\hdots,\sigma_{n}\) be the \(n\) distinct embeddings of \(\fieldL\) into \(\C\). The multiplicity of each conjugate of \(\alpha\) in the list
\[
\sigma_{1}(\alpha), \hdots,\sigma_{n}(\alpha)
\]
is \([\fieldL:\Q(\alpha)]\), so
\[
[\fieldL:\Q]\cdot w_{\fieldL/\Q}(\alpha)\geq
\sqIndex{\fieldL}{\Q(\alpha)}=\frac{[\fieldL:\Q]}{[\Q(\alpha):\Q]}.
\]
Thus,
\begin{equation}\label{eq:wlb}
[\Q(\alpha):\Q]\geq \frac{1}{w_{\fieldL/\Q}(\alpha)}.
\end{equation}

\begin{lemma}
\label{lemma:DegLB}
Let \(\gamma\in\Gamma\) be an infinite order semisimple element. Let \(\bfT\) be a maximal \(\fieldk\)-torus containing \(\gamma\) and let \(\chi\in\Phi(\bfG,\bfT)\). Then there is a constant \(C_{0}\) depending only on \(\dim G\) such that \([\Q(\chi(\gamma)):\Q]\geq C_{0}\cdot [\fieldk:\Q]\).
\end{lemma}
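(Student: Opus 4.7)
My plan is to bound from above the quantity $w_{\fieldk_\bfT/\Q}(\chi(\gamma))$ defined in \eqref{eq:defw}, and then apply the general inequality \eqref{eq:wlb} with $\alpha=\chi(\gamma)$ and $\fieldL=\fieldk_\bfT$.

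First, I would note that by the preceding lemma, $\chi(\gamma)\in\fieldk_\gamma\subseteq\fieldk_\bfT$, and by the remark on the splitting field degree, $[\fieldk_\bfT:\fieldk]\leq\beta$ for a constant $\beta$ depending only on $\dim G$.

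The crux is a counting argument on the embeddings $\tilde\sigma\colon\fieldk_\bfT\hookrightarrow\C$. By \eqref{eq:2.7}, $\tilde\sigma(\chi(\gamma))=\presup{\tilde\sigma}{\chi}(\sigma(\gamma))$ where $\sigma=\tilde\sigma|_\fieldk$, so $|\tilde\sigma(\chi(\gamma))|$ is the modulus of an eigenvalue of $\Ad(\sigma(\gamma))$. At any archimedean place $v$ of $\fieldk$ where $\bfG(\fieldk_v)$ is compact, $\sigma(\gamma)$ belongs to a compact Lie group, so all adjoint eigenvalues have modulus $1$; thus $|\tilde\sigma(\chi(\gamma))|=1$ for every extension $\tilde\sigma$ of such a $\sigma$. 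Since $\bfG(\fieldk\otimes\R)\simeq G\times G'$ with $G'$ compact, the number of archimedean places of $\fieldk$ at which $\bfG$ is noncompact is bounded by the number of noncompact simple factors of $G$, a quantity controlled by $\dim G$. Counting with the factor $2$ from complex places and the extension factor $\beta$, the total number of embeddings $\tilde\sigma$ with $|\tilde\sigma(\chi(\gamma))|\neq 1$ is at most some constant $C=C(\dim G)$. This yields
\[
w_{\fieldk_\bfT/\Q}(\chi(\gamma))\leq \frac{C}{[\fieldk_\bfT:\Q]}\leq \frac{C}{[\fieldk:\Q]},
\]
and \eqref{eq:wlb} then gives $[\Q(\chi(\gamma)):\Q]\geq [\fieldk:\Q]/C$, so one may take $C_0=1/C$.

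The main technical obstacle I foresee is the degenerate case where $\chi(\gamma)$ is a root of unity: then $w_{\fieldk_\bfT/\Q}(\chi(\gamma))=0$ and \eqref{eq:wlb} becomes vacuous, so the above reduction breaks down. I would expect this case to be handled either by an implicit restriction to $\chi(\gamma)$ not a root of unity (consistent with how the subsequent proposition invokes this lemma) or by a separate Kronecker-type argument leveraging the infinite-order hypothesis on $\gamma$ to rule out pathological root-of-unity configurations.
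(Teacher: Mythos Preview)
Your proposal is correct and follows essentially the same approach as the paper: bound the proportion \(w\) of embeddings sending \(\chi(\gamma)\) off the unit circle using compactness of \(\bfG\) at all but boundedly many archimedean places, then invoke \eqref{eq:wlb}. The only cosmetic difference is that the paper works with the Galois extension \(\fieldk_{\gamma}\) rather than \(\fieldk_{\bfT}\); since \(\fieldk_{\gamma}\subseteq\fieldk_{\bfT}\) and both have degree over \(\fieldk\) bounded in terms of \(\dim G\), this changes nothing. The paper also phrases the count in terms of the set \(V_{nc}\) of embeddings \(\sigma\colon\fieldk\hookrightarrow\C\) with \(\bfG(\fieldk_{\sigma})\) noncompact (so complex places are automatically counted twice), arriving at \(w_{\fieldk_{\gamma}/\Q}(\alpha)\leq \absvalue{V_{nc}}/[\fieldk:\Q]\), which matches your bound.

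Your observation about the degenerate case \(\chi(\gamma)\) a root of unity is well taken: the paper's proof does not address it either, and indeed \eqref{eq:wlb} is vacuous when \(w=0\). Your first guess is the correct resolution: every subsequent application of the lemma (in \cref{lemma:lower-bound-degree-number-field}, \cref{proposition:3.4}, and \cref{section:5}) is to values \(\chi(\gamma)\) that are multiplicatively independent from some other value, hence in particular non-torsion. So the lemma should be read with that implicit restriction.
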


\begin{proof}
Since \(\alpha=\chi(\gamma)\) is an eigenvalue of \(\Ad\gamma\), it is an algebraic integer. We can view \(\Ad\gamma\) via the restriction of scalars \(\WeilRes_{\fieldk/\Q}\bfG\) using the diagonal embedding which induces the isomorphism \(\WeilRes_{\fieldk/\Q}\bfG(\Q)\simeq \bfG(\fieldk)\). In this case, in a suitable basis of the Lie algebra, \(\Ad\gamma\), viewed in \(\WeilRes_{\fieldk/\Q}(\bfG)\) is
\[
\big(\presup{\sigma}{(\Ad\gamma)}\tst \sigma\in\Hom_{\Q}(\fieldk,\C)\big).
\]
Recall that \(\Gamma\) is a lattice when diagonally embedded in \(\bfG(\fieldk\otimes\R)\simeq \WeilRes_{\fieldk/\Q}(\bfG)(\R)\). In this case, if we denote by \(V_{c}\) (resp. \(V_{nc}\)) the set of embeddings \(\sigma:\fieldk\into\C\) such that \(\bfG(\fieldk_{\sigma})\) is compact (resp. non-compact), it follows that
\[
\absvalue{\sigma(\alpha)}=1\qtforall \sigma\in V_{c}.
\]
We now derive an upper bound for \(w_{\fieldk_{\gamma}/\Q}(\alpha)\) (defined in \eqref{eq:defw}). Each embedding \(\fieldk\into\C\) has exactly \([\fieldk_{\gamma}:\fieldk]\) extensions to \(\fieldk_{\gamma}\), and if \(\sigma\in V_{c}\), then \(\sigma(\alpha)\) is an eigenvalue for the component of \(\Ad\gamma\) in a compact group, so \(\absvalue{\sigma(\alpha)}=1\). Thus,
\[
\card\Set{\sigma\in\Hom_{\Q}(\fieldk_{\gamma},\C)\st \absvalue{\sigma(\alpha)}=1}\geq \absvalue{V_{c}}\cdot [\fieldk_{\gamma}:\fieldk].
\]
Therefore,
\begin{align*}
\card&\Set{\sigma\in\Hom_{\Q}(\fieldk_{\gamma},\C)\st \absvalue{\sigma(\alpha)}\neq 1}\\
&=[\fieldk_{\gamma}:\Q]-\card\Set{\sigma\in\Hom_{\Q}(\fieldk_{\gamma},\C)\st \absvalue{\sigma(\alpha)}= 1}\\
&\leq [\fieldk_{\gamma}:\Q]-\absvalue{V_{c}}\cdot [\fieldk_{\gamma}:\fieldk]\\
& =[\fieldk_{\gamma}:\fieldk]\big([\fieldk:\Q]-\absvalue{V_{c}}\big)\\
& =[\fieldk_{\gamma}:\fieldk]\cdot\absvalue{V_{nc}}.
\end{align*}
Thus,
\begin{align*}
w_{\fieldk_{\gamma}/\Q}(\alpha)
&=\frac{\card\Set{\sigma\in\Hom_{\Q}(\fieldk_{\gamma},\C)\st \absvalue{\sigma(\alpha)}\neq 1}}{[\fieldk_{\gamma}:\Q]}\\
&\leq \frac{[\fieldk_{\gamma}:\fieldk]\cdot\absvalue{V_{nc}}}{[\fieldk_{\gamma}:\Q]}\\
&= \frac{\absvalue{V_{nc}}}{[\fieldk:\Q]}.
\end{align*}
By \eqref{eq:wlb} it follows that
\[
[\Q(\alpha):\Q]\geq \big(w_{\fieldk_{\gamma}/\Q}(\alpha)\big)^{-1}\geq \frac{[\fieldk:\Q]}{\absvalue{V_{nc}}}.
\]
By construction of \(\Gamma\), \(|V_{nc}|\) is precisely the number of simple factors of \(G\). As such, it is bounded in terms of  \(\dim G\) alone.
\end{proof}

If \(\alpha=\chi(\gamma)\) and \(\beta=\chi'(\gamma)\), since \([\Q(\alpha,\beta):\Q]\geq [\Q(\alpha):\Q]\), we get:

\begin{corollary}
\label{lemma:lower-bound-degree-number-field}
If \(\alpha=\chi(\gamma)\) and \(\beta=\chi'(\gamma)\), then
\[
[\Q(\alpha,\beta):\Q]\geq C_{0}\cdot [\fieldk:\Q].
\]
\end{corollary}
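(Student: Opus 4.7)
The plan is essentially to observe that this corollary is a one-line consequence of the preceding lemma (\textbf{\ref{lemma:DegLB}}). The lemma already furnishes the bound $[\Q(\chi(\gamma)):\Q]\geq C_0\cdot[\fieldk:\Q]$ for any root $\chi\in\Phi(\bfG,\bfT)$, with $C_0$ depending only on $\dim G$, so no new quantitative work is required.

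First I would invoke the tower law for field extensions: since $\Q(\alpha)$ is a subfield of $\Q(\alpha,\beta)$, we have
\[
[\Q(\alpha,\beta):\Q] = [\Q(\alpha,\beta):\Q(\alpha)]\cdot[\Q(\alpha):\Q] \geq [\Q(\alpha):\Q].
\]
Then I would apply \cref{lemma:DegLB} directly to $\alpha=\chi(\gamma)$ (the hypotheses of that lemma on $\gamma$ and $\chi$ carry over verbatim) to conclude $[\Q(\alpha):\Q]\geq C_0\cdot[\fieldk:\Q]$, and chain the two inequalities.

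There is no real obstacle here; the only thing to note is the choice of which eigenvalue to apply the lemma to. One could just as well use $\beta=\chi'(\gamma)$, since by symmetry the same bound holds for $[\Q(\beta):\Q]$, so the corollary is robust against which of the two roots one picks. In particular, the constant $C_0$ is unchanged from the one in \cref{lemma:DegLB} and depends only on $\dim G$.
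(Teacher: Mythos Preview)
Your proposal is correct and matches the paper's approach exactly: the paper derives the corollary in a single line from \cref{lemma:DegLB} via the inclusion $\Q(\alpha)\subset\Q(\alpha,\beta)$, just as you do.
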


With appropriate bounds on \([\Q(\alpha):\Q]\) and \([\Q(\alpha,\beta):\Q]\) in terms of \([\fieldk:\Q]\), we can now prove the following strengthening of \cref{proposition:3.1}.

\begin{proposition}
\label{proposition:3.4}
Let \(\chi,\chi'\in\Phi\) be distinct characters and let \(\alpha=\chi(\gamma)\), \(\beta=\chi'(\gamma)\). If \(\alpha\) and \(\beta\) are multiplicatively independent, then there exist constants \(C_{1}\) and \(C_{2}\) depending on \(\dim G\) such that
\[
\mahler(\alpha)\cdot\mahler(\beta)\geq C_{1}\cdot \sqIndex{\fieldk}{\Q}\cdot\big(\log \sqIndex{\fieldk}{\Q}\big)^{-C_{2}}.
\]
\end{proposition}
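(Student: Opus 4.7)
The plan is to apply \cref{theorem:Amoroso-David} to the pair $(\alpha,\beta)\in\Gm^{2}(\AC{\Q})$ and translate the resulting lower bound on the product of heights into one on the product of Mahler measures via the identity $\mahler(\alpha)=[\Q(\alpha):\Q]\cdot\Weilheight(\alpha)$ recalled in \S2.5.

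By hypothesis $\alpha$ and $\beta$ are multiplicatively independent, so \cref{theorem:Amoroso-David} with $n=2$ yields
\[
\Weilheight(\alpha)\,\Weilheight(\beta)\geq \frac{c(2)}{D}\,\log(3D)^{-2\kappa(2)},\qquad D=[\Q(\alpha,\beta):\Q].
\]
Multiplying through by $[\Q(\alpha):\Q]\cdot[\Q(\beta):\Q]$ converts heights to Mahler measures, so the proof reduces to a lower bound on the ratio $[\Q(\alpha):\Q]\cdot[\Q(\beta):\Q]/D$ and an upper bound on $\log(3D)$, both expressed in terms of $\sqIndex{\fieldk}{\Q}$. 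For the numerator, \cref{lemma:DegLB} applied separately to $\chi$ and $\chi'$ gives $[\Q(\alpha):\Q][\Q(\beta):\Q]\geq C_{0}^{2}\sqIndex{\fieldk}{\Q}^{2}$. For the denominator, both $\alpha=\chi(\gamma)$ and $\beta=\chi'(\gamma)$ lie in $\fieldk_{\gamma}$, which is contained in the splitting field $\fieldk_{\bfT}$ of $\bfT$ by the lemma of \S2.4; combined with the remark in \S2.3 bounding $[\fieldk_{\bfT}:\fieldk]$ by a constant $\beta(\dim G)$ depending only on $\dim G$, this gives $D\leq \beta(\dim G)\cdot\sqIndex{\fieldk}{\Q}$.

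Substituting these estimates produces
\[
\mahler(\alpha)\mahler(\beta)\geq \frac{c(2)\,C_{0}^{2}}{\beta(\dim G)}\cdot\sqIndex{\fieldk}{\Q}\cdot\log\bigl(3\beta(\dim G)\sqIndex{\fieldk}{\Q}\bigr)^{-2\kappa(2)},
\]
and absorbing the additive constant inside the logarithm into the multiplicative factor (valid once $\sqIndex{\fieldk}{\Q}$ exceeds an absolute threshold, with the finitely many smaller cases swept into $C_{1}$) yields the stated inequality with $C_{2}=2\kappa(2)$ and $C_{1}$ depending only on $\dim G$. This argument is essentially a bookkeeping exercise once \cref{theorem:Amoroso-David} is identified as the correct Diophantine input; the one point requiring attention is that the upper bound on $D$ must come from the splitting-field estimate of \S2.3 rather than from the trivial compositum bound $D\leq[\Q(\alpha):\Q][\Q(\beta):\Q]$, which would merely cancel the numerator and fail to produce the required factor of $\sqIndex{\fieldk}{\Q}$.
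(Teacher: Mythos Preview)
Your proof is correct and follows essentially the same approach as the paper's: apply \cref{theorem:Amoroso-David} with $n=2$, bound $D=[\Q(\alpha,\beta):\Q]$ from above via the splitting-field estimate $D\leq [\fieldk_{\bfT}:\fieldk]\cdot[\fieldk:\Q]$, and use \cref{lemma:DegLB} for the lower bounds on $[\Q(\alpha):\Q]$ and $[\Q(\beta):\Q]$ needed to pass from heights to Mahler measures. The only difference is cosmetic ordering---the paper first simplifies the height inequality and then converts to Mahler measures, whereas you convert first and then simplify---and your closing remark about why the trivial compositum bound on $D$ would not suffice is a helpful observation not made explicit in the paper.
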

% \begin{lemma}
% For any subset \(Y\subset\Phi\),
% \[
% \prod_{\chi\in Y} \Weilheight\big(\chi(\gamma)\big)\leq
% \prod_{\chi\in Y}\mahler\big(\chi(\gamma)\big)
% \leq
% [\fieldk_{\gamma}:\Q]^{|Y|}\cdot \prod_{\chi\in Y} \Weilheight\big(\chi(\gamma)\big).
% \]
% \end{lemma}

% \begin{proof}
% For any \(\chi\in\Phi\),
% \[
% \Weilheight\big(\chi(\gamma)\big)\leq\mahler\big(\chi(\gamma)\big)\leq [\fieldk_{\gamma}:\Q]\cdot \Weilheight\big(\chi(\gamma)\big).
% \]
% % and
% % \[
% % \Weilheight(\gamma)\leq \mahler(\chi(\gamma))\leq \mahler(\gamma) \leq [\fieldk_{\gamma}:\Q]\cdot \Weilheight(\gamma).
% % \]
% Thus,
% \begin{align*}
% \Weilheight\big(\chi(\gamma)\big)\cdot \Weilheight(\chi'(\gamma))&\leq \mahler\big(\chi(\gamma)\big)\cdot \mahler(\chi'(\gamma))\\
% & = [\Q(\chi(\gamma):\Q]\cdot [\Q(\chi(\gamma)):\Q]\cdot \Weilheight\big(\chi(\gamma)\big) \cdot \Weilheight(\chi'(\gamma))\\
% &\leq
% [\Q(\chi(\gamma),\chi'(\gamma)):\Q]^{2}\cdot \Weilheight\big(\chi(\gamma)\big) \cdot \Weilheight(\chi'(\gamma))\\
% &\leq
% [\fieldk_{\gamma}:\Q]^{2}\cdot \Weilheight\big(\chi(\gamma)\big) \cdot \Weilheight(\chi'(\gamma)).
% \end{align*}
% The result follows for finitely many \(\chi\)'s.
% \end{proof}

\begin{proof}[Proof of \cref{proposition:3.4}]
Let \(\alpha=\chi(\gamma)\), \(\beta=\chi'(\gamma)\). If they are multiplicatively independent, by \cref{theorem:Amoroso-David} (for \(n=2\)), there are absolute constants \(c_{1},c_{2}>0\) such that
\begin{equation}
\Weilheight(\alpha)\cdot \Weilheight(\beta) \geq c_{1}\frac{\big(\log \sqIndex{\Q(\alpha,\beta)}{\Q}\big)^{-c_{2}}}{\sqIndex{\Q(\alpha,\beta)}{\Q}}.
\end{equation}
Since \(\alpha\) and \(\beta\) belong to the ring of integers of \(\fieldk_{\gamma}\),
\[
\Q(\alpha,\beta)\subset\fieldk(\alpha,\beta)\subset \fieldk_{\gamma}\subset\fieldk_{\bfT},
\]
so
\[
[\Q(\alpha,\beta):\Q]\leq[\fieldk_{\bfT}:\Q]
= [\fieldk_{\bfT}:\fieldk]\cdot[\fieldk:\Q].
\]
On the other hand, applying \cref{lemma:lower-bound-degree-number-field}, there is a constant \(C_{0}\) depending only on \(\dim G\) such that
\[
[\Q(\alpha,\beta):\Q]\geq C_{0}\cdot [\fieldk:\Q].
\]
Let us write \(d_{\bfT}=[\fieldk_{\bfT}:\fieldk]\). We have
\begin{equation}
C_{0}\cdot [\fieldk:\Q]
\leq[\fieldK:\Q]
\leq d_{\bfT}\cdot[\fieldk:\Q],\qtwhere
\fieldK \tis \Q(\alpha),~\Q(\beta),\tor \Q(\alpha,\beta).
\end{equation}
Thus, the upper bound on \([\Q(\alpha,\beta):\Q]\) yields
\[
\Weilheight(\alpha)\cdot \Weilheight(\beta) \geq c_{1}\cdot \frac{(\log d_{\bfT}\cdot\sqIndex{\fieldk}{\Q})^{-c_{2}}}{d_{\bfT}\cdot\sqIndex{\fieldk}{\Q}}.
\]
Since \(d_{\bfT}\) depends only on \(\dim\bfG\), there are constants \(C_{1}\) and \(C_{2}\), depending only on \(\dim G\), such that
\begin{equation}
\Weilheight(\alpha)\cdot \Weilheight(\beta) \geq C_{1}\cdot \frac{(\log \sqIndex{\fieldk}{\Q})^{-C_{2}}}{\sqIndex{\fieldk}{\Q}}.
\end{equation}
But \cref{lemma:lower-bound-degree-number-field} gives lower bounds on \([\Q(\alpha):\Q]\) and \([\Q(\beta):\Q]\) in terms of \([\fieldk:\Q]\), so
\[
\Weilheight(\alpha)\cdot \Weilheight(\beta)
=
\frac{\mahler(\alpha)\cdot \mahler(\beta)}{[\Q(\alpha):\Q]\cdot[\Q(\beta):\Q]}
\leq
\frac{\mahler(\alpha)\cdot\mahler(\beta)}{C_{0}^{2}\cdot[\fieldk:\Q]^{2}}.
\]
We obtain
\[
C_{0}^{2}\cdot C_{1}\cdot \frac{(\log \sqIndex{\fieldk}{\Q})^{-C_{2}}}{\sqIndex{\fieldk}{\Q}}
\leq \frac{\mahler(\alpha)\cdot \mahler(\beta)}{[\fieldk:\Q]^{2}},
\]
and thus,
\[
\mahler(\alpha)\cdot\mahler(\beta)\geq C_{0}^{2}\cdot C_{1}\cdot \sqIndex{\fieldk}{\Q}\cdot\big(\log \sqIndex{\fieldk}{\Q}\big)^{-C_{2}}.\qedhere
\]
\end{proof}

Since \(\mahler(\gamma)\geq \mahler(\alpha)\) and \(\mahler(\gamma)\geq \mahler(\beta)\), we obtain:

\begin{corollary}
\label{corollary:3.5}
Let \(\gamma\in \bfG(\cO_{\fieldk})\). There exist constants \(C_{1},C_{2}>0\) depending only on \(\dim G\) such that for any two distinct roots \(\chi,\chi'\in \Phi\), if \(\chi(\gamma)\) and \(\chi'(\gamma)\) are multiplicatively independent, then
\[
\big(\mahler(\gamma)\big)^{2}\geq C_{1}\cdot \sqIndex{\fieldk}{\Q}\cdot\big(\log \sqIndex{\fieldk}{\Q}\big)^{-C_{2}}.
\]
\end{corollary}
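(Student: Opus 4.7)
The plan is a near-immediate reduction to Proposition 3.4. Unpacking the definition one has
\[
\mahler(\gamma) = \sum_{\psi \in \Phi} \mahler\big(\psi(\gamma)\big),
\]
and because $\gamma \in \bfG(\cO_{\fieldk})$ the integrality property $(\star)$ guarantees that every eigenvalue $\psi(\gamma)$ of $\Ad\gamma$ is an algebraic integer. Consequently each Mahler measure $\Mahler(\psi(\gamma))$ is at least $1$, so each summand $\mahler(\psi(\gamma)) = \log \Mahler(\psi(\gamma))$ is non-negative. This is the only substantive observation needed: keeping only the two terms indexed by $\chi$ and $\chi'$ gives
\[
\mahler(\gamma) \geq \mahler\big(\chi(\gamma)\big) = \mahler(\alpha)
\qtext{and}
\mahler(\gamma) \geq \mahler\big(\chi'(\gamma)\big) = \mahler(\beta).
\]

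Multiplying these two inequalities yields $\big(\mahler(\gamma)\big)^{2} \geq \mahler(\alpha) \cdot \mahler(\beta)$. By the standing hypothesis $\alpha$ and $\beta$ are multiplicatively independent, so Proposition 3.4 is applicable and delivers constants $C_{1}', C_{2}'>0$ depending only on $\dim G$ with
\[
\mahler(\alpha) \cdot \mahler(\beta) \geq C_{1}' \cdot \sqIndex{\fieldk}{\Q} \cdot \big(\log \sqIndex{\fieldk}{\Q}\big)^{-C_{2}'}.
\]
Chaining these inequalities and relabeling $C_{1} := C_{1}'$, $C_{2} := C_{2}'$ yields the claimed bound.

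There is essentially no obstacle: the corollary is a purely formal consequence of the proposition together with the non-negativity of individual logarithmic Mahler measures of algebraic integers. The only point requiring any care is to invoke $(\star)$ so that dropping all but two summands from $\mahler(\gamma)$ is a valid lower bound; without integrality one could in principle have negative contributions that would spoil the argument.
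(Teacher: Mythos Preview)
Your proof is correct and follows essentially the same approach as the paper: the paper derives the corollary by noting that \(\mahler(\gamma)\geq \mahler(\alpha)\) and \(\mahler(\gamma)\geq \mahler(\beta)\), multiplying, and invoking Proposition~3.4. Your explicit appeal to \((\star)\) to justify the non-negativity of the individual summands is a welcome clarification, but otherwise the arguments coincide.
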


We conclude that there is a constant \(0<\delta<1/2\) depending only on \(\dim G\) such that
\begin{equation}
\mahler(\gamma)\geq \big(\sqIndex{\fieldk}{\Q}\big)^{\frac{1}{2}-\delta},
\qtfor [\fieldk:\Q]~\text{sufficiently large}.
\end{equation}
Thus, if \(\mahler(\gamma)\) is small, by choosing \(\fieldk\) of sufficiently high degree over \(\Q\), we see that any two (distinct) roots must be multiplicatively dependent.

%\begin{corollary}
%For every \(\epsilon>0\), there exist \(N\in\N\), dependent only of \(\dim \bfG\) and \(\epsilon\), such that if \(\sqIndex{\fieldk}{\Q}\geq N\) and \(\mahler(\gamma)\leq \epsilon\), then for any distinct \(\chi,\chi'\in\Phi\), either
%\begin{enumerate}[bfroman]
%\item \(\chi(\gamma)\) or \(\chi'(\gamma)\) (or both) is a root of unity, or
%\item \(\chi(\gamma)\) and \(\chi'(\gamma)\) are multiplicatively dependent.
%\end{enumerate}
%\end{corollary}

%\begin{proof}
%Suppose for the sake of contradiction  that \(\mahler(\gamma)\leq \epsilon\) and \(\chi(\gamma)\) and \(\chi'(\gamma)\) are multiplicatively independent. Choose \(N\) such that
%\[
%\epsilon^{2}< C_{1} N\cdot \big(\log N\big)^{-C_{2}}.
%\]
%Then, for \([\fieldk:\Q]\geq N\),
% \flusheq[0.5]
%\begin{align*}
%C_{1} N\cdot \big(\log N\big)^{-C_{2}}&>\epsilon^{2}\geq
%\big(\mahler(\gamma)\big)^{2}\geq C_{1}\cdot %\sqIndex{\fieldk}{\Q}\cdot\big(\log %\sqIndex{\fieldk}{\Q}\big)^{-C_{2}}\\
%&\geq C_{1}\cdot N\cdot\big(\log N\big)^{-C_{2}}
%\quad\big(\text{since}~x\mapsto x(\log x)^{-C_{2}}~\text{is increasing}\big),
%\end{align*}
%a contradiction with Corollary \ref{corollary:3.5}.
%\end{proof}

%We observe that the bound on \([\fieldk:\Q]\) is in fact independent of \(\gamma\).

\section{Constructing Salem numbers from short geodesics}

In this section, we prove \cref{theorem:B} of which \cref{theorem:A} is a special case.  Given a short geodesic on \(\Gamma\backslash X\) corresponding to an infinite order semisimple element \(\gamma\), \cref{proposition:3.1} shows that we have two distinct roots \(\chi,\chi'\in\Phi(\bfG,\bfT)\) such that \(\chi(\gamma)\) and \(\chi'(\gamma)\) are multiplicatively dependent. We use this multiplicative dependence together with the action of the Galois group on the root system to construct a Salem number.
%All results in this section are conditional on the assumptions of \cref{proposition:3.1}.
Throughout this section, we assume that the assumptions of \cref{proposition:3.1} hold. 
In addition, we assume that \(\Gamma\) is cocompact, which is equivalent to \(\bfG\) being \(\fieldk\)-anisotropic. In this case, every element in \(\Gamma\) is semisimple \cite[11.14]{Raghunathan1972}.

\subsection{Partition of the set of eigenvalues}

Let \(\gamma\in \bfG(\fieldk)\). Define
\[
S_{\gamma}\defeq \Set{\chi(\gamma)\st \chi\in\Phi(\bfG,\bfT)}
= S_{\gamma}^{\hyp}\sqcup S_{\gamma}^{\elliptic}\subset \fieldk_{\gamma},
\]
where \(S_{\gamma}^{\hyp}\) (resp. \(S_{\gamma}^{\elliptic}\)) denotes the values afforded by the roots of \(\gamma\) of absolute value \(\neq 1\) (resp. \(=1\)):
\[
S_{\gamma}^{\elliptic}\defeq \Set{z\in S_{\gamma} \st z ~\text{is torsion}},\quad
S_{\gamma}^{\hyp}\defeq S_{\gamma}\setminus S_{\gamma}^{\elliptic}.
\]
Let \(\scrG_{\fieldk}=\Gal(\fieldk)\) be the absolute Galois group of \(\fieldk\). We have an action of \(\scrG_{\fieldk}\) on \(S_{\gamma}\), \(S_{\gamma}^{\hyp}\) and \(S_{\gamma}^{\elliptic}\).

We will need a simple lemma:

\begin{lemma}
\label{lemma:prod1}
Suppose \(\Gamma\) is cocompact and let \(\alpha\in S_{\gamma}\). Then, \(\prod_{\sigma\in\Gal(\fieldk_{\bfT}/\fieldk)}\sigma(\alpha)=1\).
\end{lemma}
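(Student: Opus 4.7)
The plan is to translate the statement about $\sigma(\alpha) = \sigma(\chi(\gamma))$ into a statement about the character group $\Characters(\bfT)$, then apply the fact that $\bfG$ being $\fieldk$-anisotropic forces $\bfT$ to be $\fieldk$-anisotropic, and conclude by the vanishing of Galois invariants in $\Characters(\bfT)$.

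\medskip

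First, since $\alpha \in S_\gamma$, write $\alpha = \chi(\gamma)$ for some root $\chi \in \Phi(\bfG,\bfT)$. For any $\sigma\in\scrG_\bfT = \Gal(\fieldk_\bfT/\fieldk)$, formula \eqref{eq:2.3} together with $\sigma(\gamma)=\gamma$ (which holds because $\gamma\in\bfG(\fieldk)$) yields
\[
\sigma(\alpha) = \sigma(\chi(\gamma)) = \presup{\sigma}{\chi}(\sigma(\gamma)) = \presup{\sigma}{\chi}(\gamma).
\]
Taking the product over $\sigma\in\scrG_\bfT$ and using the fact that evaluation at $\gamma$ is a group homomorphism $\Characters(\bfT)\to \AC{\fieldk}^\times$, we obtain
\[
\prod_{\sigma\in\scrG_\bfT}\sigma(\alpha) = \Bigl(\sum_{\sigma\in\scrG_\bfT}\presup{\sigma}{\chi}\Bigr)(\gamma),
\]
where the sum is taken in the character lattice $\Characters(\bfT)$ (written additively).

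\medskip

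The character $\eta \defeq \sum_{\sigma\in\scrG_\bfT}\presup{\sigma}{\chi}$ is manifestly invariant under the $\scrG_\bfT$-action, since reindexing $\sigma\mapsto \tau\sigma$ for any $\tau\in\scrG_\bfT$ permutes the summands. Because the natural action of $\AGal{\fieldk}$ on $\Characters(\bfT)$ factors through $\scrG_\bfT$, the element $\eta$ belongs to $\Characters(\bfT)^{\AGal{\fieldk}}$, i.e., $\eta$ is a $\fieldk$-rational character of $\bfT$.

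\medskip

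Now we exploit cocompactness: the hypothesis that $\Gamma$ is cocompact in $G$ is equivalent to $\bfG$ being $\fieldk$-anisotropic (by Godement's criterion). Since $\bfT$ is a maximal $\fieldk$-torus of $\bfG$, it follows that $\bfT$ is itself $\fieldk$-anisotropic (any nontrivial $\fieldk$-split subtorus of $\bfT$ would be a nontrivial $\fieldk$-split subtorus of $\bfG$). By definition of $\fieldk$-anisotropy, the group of $\fieldk$-rational characters of $\bfT$ is trivial, so $\eta = 0$ in $\Characters(\bfT)$. Evaluating at $\gamma$ gives
\[
\prod_{\sigma\in\scrG_\bfT}\sigma(\alpha) = \eta(\gamma) = 1,
\]
as desired. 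The only slightly delicate point is checking that anisotropy is inherited by the maximal torus; this is a standard fact but is the one piece of structure theory to invoke carefully.
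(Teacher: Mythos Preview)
Your proof is correct and follows essentially the same argument as the paper: write $\alpha=\chi(\gamma)$, observe that $\prod_{\sigma}\presup{\sigma}{\chi}$ is a Galois-invariant character, and use that cocompactness of $\Gamma$ forces $\bfG$ (and hence $\bfT$) to be $\fieldk$-anisotropic, so this invariant character is trivial. The paper's version is terser---it jumps directly from $\fieldk$-anisotropy of $\bfG$ to $\Characters(\bfT)^{\scrG_{\bfT}}=\{1\}$---but your explicit invocation of \eqref{eq:2.3} and the inheritance of anisotropy by $\bfT$ is a welcome clarification rather than a different route.
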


\begin{proof}
Let \(\alpha=\chi(\gamma)\) for some \(\chi\in \Phi\). The assumption that \(\Gamma\) is cocompact means that \(\bfG\) is \(\fieldk\)-anisotropic, i.e. has no \(\fieldk\)-split tori. In particular, \(\Characters(\bfT)^{\Gal(\fieldk_{\bfT}/\fieldk)}=\{1\}.\) Since \(\prod_{\sigma}\chi^\sigma\in \Characters(\bfT)^{\Gal(\fieldk_{\bfT}/\fieldk)},\) we get \(\prod_{\sigma}\chi^\sigma(\gamma)=\prod_{\sigma}\sigma(\alpha)=1\), where all the products are over \(\sigma\in \Gal(\fieldk_{\bfT}/\fieldk)\).
\end{proof}
% Since it acts on \(\Phi(\bfG,\bfT)\), it induces an action on the set \(S_{\gamma}\) by
%
% \[
% \sigma\cdot\alpha=\presup{\sigma}{\chi}(\gamma),
% \qtif
% \alpha=\chi(\gamma)\in S_{\gamma}.
%
% \]
% Then, if \(\sigma,\tau\in \scrG_{\fieldk}\), we have
%
% \[
% (\sigma\tau)\cdot\alpha=\presup{\sigma\tau}{\chi}(\gamma)=\presup{\sigma}{(\presup{\tau}{\chi})}(\gamma)=\sigma\cdot \big(\presup{\tau}{\chi}(\gamma)\big)=\sigma\cdot (\tau\cdot \alpha).
%
% \]
% Alternatively, we can let \(\scrG_{\fieldk}\) act directly on the values \(\chi(\gamma)\):
%
% \[
% \sigma\cdot\alpha=\sigma\big(\chi(\gamma)\big)=\presup{\sigma}{\chi}\big(\sigma(\gamma)\big),
%
% \]
% and observe that the two actions coincide:
% \begin{align*}
% (\sigma\tau)\cdot\alpha
% & = (\sigma\tau)\big(\chi(\gamma)\big)\\
% & = \sigma\big(\tau\big(\chi(\gamma)\big)\big)\\
% & = \sigma\big(\presup{\tau}{\chi}\big(\tau(\gamma)\big)\big)\\
% & = \presup{\sigma}{(\presup{\tau}{\chi})}\big(\sigma\big(\tau(\gamma)\big)\big)\\
% &=\presup{\sigma\tau}{\chi}\big((\sigma\tau)(\gamma)\big).
% \end{align*}
% Hence, the set \(S_{\gamma}\) is \(\scrG_{\fieldk}\)-invariant. In fact, both \(S_{\gamma}^{\hyp}\) and \(S_{\gamma}^{\elliptic}\) are \(\scrG_{\fieldk}\)-invariant.
% Indeed, if \(\sigma\in\scrG_{\fieldk}\), and \(\alpha\in S_{\gamma}^{\elliptic}\), we have
%
% \[
% \absvalue{\sigma\cdot \alpha}=\absvalue{\presup{\sigma}{\chi}(\gamma)}
% =\absvalue{\sigma(\chi(\sigma^{-1}(\gamma)))}
%
% \]

\subsection{A Cocycle for the absolute Galois group}

%If \(\alpha,\beta\in S_{\gamma}^{\hyp}\), then \(\beta=\sigma(\alpha)\) for some \(\sigma\in\scrG_{\fieldk}\). 
Let \(\alpha=\chi(\gamma)\in S_\gamma^\hyp\). Since \(\sigma(\alpha)\in S_{\gamma}^{\hyp}\), the numbers \(\alpha\) and \(\sigma(\alpha)\) are multiplicatively dependent by \cref{proposition:3.1}. We infer that there exist integers \(p\), \(q\), such that \(\sigma(\alpha)^{p}=\alpha^{q}\). Define the cocycle
\begin{align*}
c:\scrG_{\fieldk}\times S_{\gamma}^{\hyp}&\to \Q^{\times}\\
c(\sigma,\alpha)&=\frac{p}{q},\qtif \sigma(\alpha)^{p}=\alpha^{q}.
\end{align*}
Let us show that \(c\) is well-defined.
\begin{proof}
Let \(p,p',q,q'\in\Z\) be such that
\[
\sigma(\alpha)^{p}=\alpha^{q}\qand
\sigma(\alpha)^{p'}=\alpha^{q'}.
\]
Then,
\begin{alignat*}{2}
\alpha^{q\cdot p'}
& = (\alpha^{q})^{p'} = \big(\sigma(\alpha)^{p}\big)^{p'}
%&\quad\big(\sigma(\alpha)^{p}=\alpha^{q}\big)\\
 = \sigma(\alpha)^{p\cdot p'} = \big(\sigma(\alpha)^{p'}\big)^{p} = \alpha^{q'\cdot p}.%&\quad\big(\sigma(\alpha)^{p'}=\alpha^{q'}\big)\\
\end{alignat*}
Hence, \(\alpha^{qp'-q'p}=1\). Since \(\alpha\in S_{\gamma}^{\hyp}\), it is not a root of unity, so we have \(qp'=q'p\) and thus \(p'/q'=p/q\).
\end{proof}

\begin{lemma}
The map \(c\) is a 1-cocycle, i.e., if \(\sigma,\tau\in \scrG_{\fieldk}\) and \(\alpha\in S_{\gamma}^{\hyp}\), the relation
\begin{equation}
c(\tau\sigma,\alpha)=c\big(\tau,\sigma(\alpha)\big)\cdot c(\sigma,\alpha)
\end{equation}
holds.
\end{lemma}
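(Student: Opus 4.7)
The plan is to verify the cocycle identity by direct manipulation of the defining relations of $c$, once all the quantities $c(\sigma,\alpha)$, $c(\tau,\sigma(\alpha))$, and $c(\tau\sigma,\alpha)$ have been shown to be well-defined.

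For well-definedness, I would first note that $S_\gamma^\hyp$ is stable under the $\scrG_\fieldk$-action: if $\alpha=\chi(\gamma)$ then by \eqref{eq:2.3} we have $\sigma(\alpha)=(\presup{\sigma}{\chi})(\gamma)\in S_\gamma$, and Galois conjugates of non-roots of unity remain non-roots of unity, so $\sigma(\alpha),(\tau\sigma)(\alpha)\in S_\gamma^\hyp$. Proposition \ref{proposition:3.1} then supplies the multiplicative dependencies needed to define both $c(\tau,\sigma(\alpha))$ and $c(\tau\sigma,\alpha)$ (in the trivial sub-case where $\sigma$ or $\tau$ fixes the relevant character, dependence is immediate and $c$ takes the value $1$).

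Next I would write $c(\sigma,\alpha)=p/q$ and $c(\tau,\sigma(\alpha))=r/s$, so that $\sigma(\alpha)^p=\alpha^q$ and $(\tau\sigma)(\alpha)^r=\sigma(\alpha)^s$. Raising the first equation to the $s$-th power and the second to the $p$-th power yields $\sigma(\alpha)^{ps}=\alpha^{qs}$ and $(\tau\sigma)(\alpha)^{rp}=\sigma(\alpha)^{sp}$; chaining these produces $(\tau\sigma)(\alpha)^{rp}=\alpha^{qs}$. By the uniqueness argument already used to establish well-definedness of $c$, this forces $c(\tau\sigma,\alpha)=rp/(qs)=(r/s)(p/q)=c(\tau,\sigma(\alpha))\cdot c(\sigma,\alpha)$, which is the required identity.

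The only real subtlety is bookkeeping: one must check that $p,q,r,s$ are all nonzero, which is immediate from $\alpha,\sigma(\alpha),(\tau\sigma)(\alpha)\in S_\gamma^\hyp$, since a zero exponent would force one of them to be a root of unity. Beyond this, the identity is essentially a one-line algebraic manipulation, and I do not anticipate any deeper obstacle here; the substantive work has already been done in Proposition \ref{proposition:3.1}, which secures the multiplicative relations we are chaining together.
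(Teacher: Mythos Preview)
Your proposal is correct and follows essentially the same approach as the paper: both introduce integers realizing $c(\sigma,\alpha)$ and $c(\tau,\sigma(\alpha))$, then chain the two relations $\sigma(\alpha)^{p}=\alpha^{q}$ and $\tau(\sigma(\alpha))^{r}=\sigma(\alpha)^{s}$ by raising to suitable powers to obtain $(\tau\sigma)(\alpha)^{rp}=\alpha^{qs}$. Your additional remarks on the stability of $S_{\gamma}^{\hyp}$ under $\scrG_{\fieldk}$ and the non-vanishing of the exponents are handled implicitly in the paper's surrounding text, but it is good practice to make them explicit as you do.
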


\begin{proof}
Let \(p,q,a,b\in\Z^{*}\) be such that
\[
c(\sigma,\alpha)=p/q
\qand
c\big(\tau,\sigma(\alpha)\big)=a/b.
\]
We have
\begin{alignat*}{3}
\sigma(\alpha)^{p}
& = \alpha^{q}&&\qLeftrightarrow&
c(\sigma,\alpha)
& = p/q,\\
\tau\big(\sigma(\alpha)\big)^{a}
& = \sigma(\alpha)^{b}&&\qLeftrightarrow&
c\big(\tau,\sigma(\alpha)\big)
& = a/b.
\end{alignat*}
Thus,
\begin{align*}
(\tau\sigma)(\alpha)^{ap}
= \big[\big(\tau(\sigma(\alpha))\big)^{a}\big]^{p}
= \big[\sigma(\alpha)^{b}\big]^{p}
= \big[\sigma(\alpha)^{p}\big]^{b}
= \big[\alpha^{q}\big]^{b}.
\end{align*}
So
\[
(\tau\sigma)(\alpha)^{ap}=\alpha^{qb},
\]
that is,
\[
c(\tau\sigma,\alpha)=\frac{ap}{qb}=\frac{a}{b}\cdot\frac{p}{q}=c\big(\tau,\sigma(\alpha)\big)\cdot c(\sigma,\alpha),
\]
which is the cocycle relation.
\end{proof}

\begin{lemma}
The map \(c\) takes values in \(\{-1,+1\}\).
\end{lemma}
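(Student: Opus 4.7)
The plan is to use the basic properties of the absolute logarithmic Weil height on $\AC{\Q}^{\times}$, together with the fact that elements of $S_{\gamma}^{\hyp}$ are not roots of unity.

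Fix $\sigma \in \scrG_{\fieldk}$ and $\alpha \in S_{\gamma}^{\hyp}$, and write $c(\sigma,\alpha) = p/q$ for nonzero integers $p, q$ satisfying $\sigma(\alpha)^{p} = \alpha^{q}$. I would apply the Weil height $\Weilheight(\cdot)$ to both sides. Two standard facts about the height suffice: first, $\Weilheight$ is Galois-invariant, so $\Weilheight(\sigma(\alpha)) = \Weilheight(\alpha)$; second, $\Weilheight(\beta^{n}) = |n|\,\Weilheight(\beta)$ for every nonzero algebraic number $\beta$ and every integer $n$. Combining these,
\[
|p|\cdot \Weilheight(\alpha) \;=\; \Weilheight\big(\sigma(\alpha)^{p}\big) \;=\; \Weilheight(\alpha^{q}) \;=\; |q|\cdot \Weilheight(\alpha).
\]

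By definition of $S_{\gamma}^{\hyp}$, the element $\alpha$ is an algebraic integer that is not a root of unity, so Kronecker's theorem forces $\Weilheight(\alpha) > 0$. Dividing the displayed equation through by $\Weilheight(\alpha)$ gives $|p| = |q|$, and hence $c(\sigma,\alpha) = p/q \in \{-1,+1\}$, as claimed.

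There is really no obstacle here: the substantive content is hidden in \cref{proposition:3.1}, which is what guarantees that $\sigma(\alpha)$ and $\alpha$ are multiplicatively dependent in the first place, so that the cocycle $c$ is defined at all. Given that, the fact that $c$ lands in the torsion subgroup $\{\pm 1\}$ of $\Q^{\times}$ is a one-line consequence of Galois invariance and positivity of the Weil height on non-roots of unity.
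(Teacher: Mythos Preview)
Your proof is correct and is genuinely different from the paper's. The paper argues by first establishing, via direct computation and an induction using the cocycle relation, that \(c(\sigma^{n},\alpha)=c(\sigma,\alpha)^{n}\) for all \(n\in\N\); since the action of \(\scrG_{\fieldk}\) on \(S_{\gamma}^{\hyp}\) factors through a finite quotient, the sequence \((c(\sigma,\alpha)^{n})_{n}\) takes only finitely many values, forcing \(c(\sigma,\alpha)\) to be torsion in \(\Q^{\times}\), i.e.\ \(\pm 1\). Your route via the absolute Weil height is shorter and more conceptual: Galois invariance of \(\Weilheight\) together with \(\Weilheight(\beta^{n})=\lvert n\rvert\,\Weilheight(\beta)\) and Kronecker's theorem immediately yield \(\lvert p\rvert=\lvert q\rvert\). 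The paper's argument is more self-contained (it avoids invoking height machinery for this particular lemma) and along the way proves the auxiliary identity \(c(\sigma,\alpha)=c(\sigma,\sigma(\alpha))\), but your approach fits naturally with the height-theoretic flavour of the surrounding sections and dispatches the lemma in one line.
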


\begin{proof}
Let \(p,q\in\Z^{*}\) be such that \(c(\sigma,\alpha)=p/q\). Then, \(\sigma(\alpha)^{p}=\alpha^{q}\). Since \(\sigma\) is a homomorphism,
\begin{align*}
\big[\sigma^{2}(\alpha)\big]^{p^{2}}
& = \big[\sigma(\sigma(\alpha))\big]^{p^{2}}
= \big[\sigma\big(\sigma(\alpha)\big)^{p}\big]^{p}
= \big[\sigma\big(\sigma(\alpha)^{p}\big)\big]^{p}\\
& = \big[\sigma\big(\alpha^{q}\big)\big]^{p}
= \big[\sigma(\alpha)\big]^{pq}
= \big[\sigma(\alpha)^{p}\big]^{q}
= \alpha^{q^{2}}.
\end{align*}
Hence,
\[
\big[\sigma^{2}(\alpha)\big]^{p^{2}}=\alpha^{q^{2}},
\qqtext{or, equivalently,}
c(\sigma^{2},\alpha)=c(\sigma,\alpha)^{2}.
\]
The cocycle relation gives
\[
c(\sigma^{2},\alpha)=c\big(\sigma,\sigma(\alpha)\big)\cdot c(\sigma,\alpha)=c(\sigma,\alpha)^{2},
\]
so we get
\begin{equation}
\label{eq:sigma-cocycle}
c(\sigma,\alpha)=c\big(\sigma,\sigma(\alpha)\big).
\end{equation}
Consider the identity
\begin{equation}
\label{eq:cocycle-induction-hyp}
c(\sigma^{n},\alpha)=c(\sigma,\alpha)^{n}\quad (\alpha\in S_{\gamma}^{\hyp},~n\in\N).
\end{equation}
We prove it by induction on \(n\). We have already shown that it holds for \(n=1,2\) and all \(\alpha\in S_{\gamma}^{\hyp}\). Assume that \eqref{eq:cocycle-induction-hyp} holds for some \(n\geq 2\) and all \(\alpha\in S_{\gamma}^{\hyp}\). By the cocycle relation,
\begin{alignat*}{2}
c(\sigma^{n+1},\alpha)
& = c\big(\sigma^{n},\sigma(\alpha)\big)\cdot c(\sigma,\alpha)\\
& = c(\sigma,\sigma(\alpha))^{n}\cdot c(\sigma,\alpha)
&&\quad(\text{by the induction hypothesis \eqref{eq:cocycle-induction-hyp}})\\
&= c(\sigma,\alpha)^{n}\cdot c(\sigma,\alpha)
&&\quad(\text{by \eqref{eq:sigma-cocycle}})\\
&= c(\sigma,\alpha)^{n+1}.
\end{alignat*}
Thus, we have
\[
c(\sigma^{n},\alpha)=c(\sigma,\alpha)^{n}\qtforall n\in\N.
\]
This means that
\[
\frac{p_{n}}{q_{n}}= \left(\frac{p}{q}\right)^{n}\qtwhere
\sigma^{n}(\alpha)^{p_{n}}=\alpha^{q_{n}}\tand p_{1}/q_{1}=p/q.
\]
% Indeed, we have
% \[
% c(\sigma^{n},\alpha)=\frac{p_{n}}{q_{n}}
% \qLeftrightarrow
% \big(\sigma^{n}(\alpha)\big)^{p_{n}}=\alpha^{q_{n}}
% \]
% and
% \[
% c(\sigma,\alpha)=\frac{p}{q}
% \qLeftrightarrow
% \big(\sigma(\alpha)\big)^{p}=\alpha^{q}.
% \]
% So
% \[
% c(\sigma^{n},\alpha)=
% c(\sigma,\alpha)^{n}
% \qLeftrightarrow
% \frac{p_{n}}{q_{n}}= \pfrac{p^{n}}{q^{n}}.
% \]
It follows that%So we get that
\[
\big(\sigma^{n}(\alpha)\big)^{p^{n}}=\alpha^{q^{n}}\qtforall n\in\N^{*}.
\]
Now, for \(\sigma\in\scrG_{\fieldk}\), the orbit \(\set{\sigma^{n}\tst n\in\N}\) is finite, so the corresponding sequence \(\big((p/q)^{n}\big)_{n}\) must be finite as well. This implies that \(c(\sigma,\alpha)\in\{1,-1\}\).
\end{proof}

This shows that if \(\sigma(\alpha)\) and \(\alpha\) are multiplicatively dependent, there is an integer \(p\) such that \(\sigma(\alpha)^{p}=\alpha^{\pm p}\).

\begin{proposition}
\label{4.5}
For each \(\sigma\in \scrG_{\fieldk}\), let \(p_{\sigma}\) be defined by
\begin{equation}
p_{\sigma}\defeq \min \set[big]{n\in\N^{*} \tst \sigma(\alpha)^{n}=\alpha^{n} \tor \sigma(\alpha)^{n}=\alpha^{-n}}
\end{equation}
and define \(\ulalpha\) by
\begin{equation}
\label{eq:ulalpha}
\ulalpha\defeq
\alpha^{\prod_{\sigma\in \scrG_{\fieldk}} p_{\sigma}}.
\end{equation}
Then, for any \(\sigma\in\scrG_{\fieldk}\), we have
\[
\sigma(\ulalpha)=\ulalpha^{c(\sigma,\alpha)}.
\]
\end{proposition}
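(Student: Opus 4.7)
The plan is a direct computation, so the key is making sure the exponents line up. First, I would observe that although $\scrG_{\fieldk}$ is infinite, the quantity $p_\sigma$ only depends on the image of $\sigma$ in the finite quotient $\Gal(\fieldk_{\bfT}/\fieldk)$ (since this is exactly the group that acts nontrivially on the finite set $S_\gamma^{\hyp}$). So I would interpret the product $N \defeq \prod_{\sigma \in \scrG_{\fieldk}} p_\sigma$ as ranging over any chosen system of coset representatives, or equivalently take $N$ to be any positive integer divisible by every $p_\sigma$; only the divisibility property will be used.

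Next, fix $\tau \in \scrG_{\fieldk}$ and write $N = p_\tau \cdot M$. Then
\[
\tau(\ulalpha) = \tau(\alpha^N) = \tau(\alpha)^N = \bigl(\tau(\alpha)^{p_\tau}\bigr)^M.
\]
By the definition of $p_\tau$, we have $\tau(\alpha)^{p_\tau} = \alpha^{c(\tau,\alpha)\, p_\tau}$ with $c(\tau,\alpha) \in \{\pm 1\}$, established in the preceding lemma. Substituting gives
\[
\tau(\ulalpha) = \alpha^{c(\tau,\alpha)\, p_\tau M} = \alpha^{c(\tau,\alpha)\, N} = (\alpha^N)^{c(\tau,\alpha)} = \ulalpha^{c(\tau,\alpha)},
\]
which is the desired identity.

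There is essentially no obstacle here: the work has already been done in showing that $c$ is well-defined and takes values in $\{\pm 1\}$, and in establishing the existence of a minimal $p_\tau$. The only thing to be careful about is that $N$ is a genuine integer (not infinite) and that the divisibility $p_\tau \mid N$ holds for every $\tau$; both follow from the fact that $\scrG_{\fieldk}$ acts on the finite set $S_\gamma^{\hyp}$ through a finite quotient, so the collection $\{p_\sigma\}_\sigma$ has finitely many distinct values. One brief remark worth including in the write-up is that this choice of exponent $N$ is what forces $\ulalpha$ to transform by the character $\sigma \mapsto c(\sigma,\alpha)$; any smaller exponent would give an ambiguity between $\alpha^{p_\sigma}$ and $\sigma(\alpha)^{p_\sigma}$ that we want to eliminate.
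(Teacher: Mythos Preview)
Your argument is correct and follows essentially the same route as the paper: write the exponent as \(p_{\tau}\) times an integer, use the defining property \(\tau(\alpha)^{p_{\tau}}=\alpha^{\pm p_{\tau}}\) together with the fact that \(c(\tau,\alpha)\in\{\pm1\}\), and conclude. Your remark that the product \(\prod_{\sigma}p_{\sigma}\) must be read as a product over the finite quotient \(\Gal(\fieldk_{\bfT}/\fieldk)\) (or, equivalently, as any common multiple of the finitely many distinct \(p_{\sigma}\)) is a useful clarification that the paper leaves implicit.
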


\begin{proof}
For ease of notation, let \(P \defeq \prod_{\sigma\in\scrG_{\fieldk}}p_{\sigma}\). Then, \(\ulalpha=\alpha^{P }\), and for any \(\sigma\in\scrG_{\fieldk}\),
\[
P /p_{\sigma}=\prod_{\substack{\tau\in\scrG_{\fieldk}\\ \tau\neq \sigma}} p_{\tau}.
\]
Let \(\sigma\in \scrG_{\fieldk}\). Then,
\begin{align*}
\sigma(\ulalpha)
& = \sigma\left(\alpha^{P }\right)
= \sigma\left(\alpha^{p_{\sigma}\cdot(P /p_{\sigma})}\right)
= \sigma\left(\big(\alpha^{p_{\sigma}}\big)^{P /p_{\sigma}}\right)\\
& = \sigma\big(\alpha^{p_{\sigma}}\big)^{P /p_{\sigma}}.
\end{align*}
We know that \(c(\sigma,\alpha)=\pm 1\). For any integer \(p\), \(\sigma(\alpha)^{p}=\sigma(\alpha^{p})\) and \(\alpha^{-p}=(\alpha^{p})^{-1}\), so
\[
\sigma(\alpha^{p})=\begin{cases}
\alpha^{p}&\tif c(\sigma,\alpha)=+1,\\
(\alpha^{p})^{-1}&\tif c(\sigma,\alpha)=-1.
\end{cases}
\]
Thus, we always have
\begin{equation}
\sigma(\alpha^{p})=\big(\alpha^{p}\big)^{c(\sigma,\alpha)}
\qtwhenever
\sigma(\alpha)^{p}=\alpha^{\pm p}.
\end{equation}
In particular, this holds for \(p_{\sigma}\), and we get
\[
\sigma(\alpha^{p_{\sigma}})=(\alpha^{p_{\sigma}})^{c(\sigma,\alpha)}\qtforall \sigma\in\scrG_{\fieldk}.
\]
We can then rewrite \(\sigma(\ulalpha)\) as
\begin{align*}
\sigma(\ulalpha)
& = \sigma\big(\alpha^{p_{\sigma}}\big)^{P /p_{\sigma}}
= \big[(\alpha^{p_{\sigma}})^{c(\sigma,\alpha)}\big]^{P /p_{\sigma}}
= \big[(\alpha^{c(\sigma,\alpha)})^{p_{\sigma}}\big]^{P /p_{\sigma}}\\
& = \big[\alpha^{c(\sigma,\alpha)}\big]^{P }
= \big[\alpha^{P }\big]^{c(\sigma,\alpha)}
= \ulalpha^{c(\sigma,\alpha)}.
\end{align*}
This proves the claim.
\end{proof}

One would like to take \(\ulalpha\) as the candidate for the Salem number appearing implicitly in \cref{theorem:A} (resp. \cref{theorem:B}) but the high power \(P \defeq \prod_{\sigma\in \scrG_{\fieldk}} p_{\sigma}\) in the definition of \(\ulalpha\) is difficult to control as the degree of the trace field grows. To circumvent this issue we replace \(\ulalpha\) by a smaller product \(\Tilde{\alpha}\), which will exhibit similar properties under the Galois action. Let \(\fieldk'\defeq \fieldk(\ulalpha)\). By \cref{4.5}, this is a quadratic extension of \(\fieldk\). Put
\begin{equation}
\label{eq:tilde-alpha}
\tilde{\alpha}=\prod_{\sigma\in \Gal(\fieldk_{\bfT}/\fieldk')}\sigma(\alpha)\in \fieldk_{\gamma}.
\end{equation}
We have
\[
\tilde{\alpha}^{P }=\prod_{\sigma\in \Gal(\fieldk_{\bfT}/\fieldk')}\sigma(\ulalpha)=\ulalpha^{[\fieldk_{\bfT}:\fieldk']}=\alpha^{P [\fieldk_{\bfT}:\fieldk']}\neq 1.
\]
By definition of \(\tilde{\alpha}\) and \cref{lemma:prod1}, it is clear that
\begin{align*}
\sigma(\tilde{\alpha})&=\begin{cases}
\tilde\alpha & \tif \sigma\in \Gal(\fieldk_{\bfT}/\fieldk')\\
\tilde{\alpha}^{-1} & \tif \sigma\in \Gal(\fieldk_{\bfT}/\fieldk)\setminus \Gal(\fieldk_{\bfT}/\fieldk')
\end{cases}\\
&=\tilde{\alpha}^{c(\sigma,\alpha)}.
\end{align*}
We note that, by \cref{lemma:prod1} and the fact that \(\alpha\) is non-torsion, \(\tilde\alpha\) cannot be fixed by \(\Gal(\fieldk_{\bfT}/\fieldk)\), so \(\fieldk'=\fieldk(\tilde\alpha)\neq \fieldk\).

\begin{corollary}
\label{4.7}
If \(\alpha\in S_{\gamma}^{\hyp}\), then \(\tilde{\alpha}\) is an algebraic unit of signature \((s,t)\) where \(0\leq t\leq r_{2}\) and \(1\leq s+2t\leq r_{1}+2r_{2}\). In particular, if \(\bfG\) is \(\R\)-simple, then \(\tilde{\alpha}\) is a Salem number.
\end{corollary}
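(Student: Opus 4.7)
The plan is to verify, in turn, that $\tilde{\alpha}$ is (a) an algebraic integer, (b) a unit, (c) not a root of unity, and (d) admits at most $r_1$ real and at most $2r_2$ non-real Galois conjugates outside the unit circle. Together with Kronecker's theorem, these four facts yield the claimed signature range, and the Salem case is a numerical specialisation.

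Items (a) and (b) are formal. As a product of algebraic integers $\tau(\alpha)$, the element $\tilde{\alpha}$ is an algebraic integer. Because $\fieldk' = \fieldk(\tilde{\alpha})$ is quadratic over $\fieldk$ and the non-trivial element $\iota \in \Gal(\fieldk'/\fieldk)$ satisfies $\iota(\tilde{\alpha}) = \tilde{\alpha}^{c(\iota,\alpha)} = \tilde{\alpha}^{-1}$, one has $N_{\fieldk'/\fieldk}(\tilde{\alpha}) = \tilde{\alpha} \cdot \iota(\tilde{\alpha}) = 1$; taking norms down to $\Q$ gives $\tilde{\alpha} \in \cO_{\fieldk'}^{\times}$. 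For (c), the identity $\tilde{\alpha}^P = \alpha^{P[\fieldk_\bfT:\fieldk']}$ established just before the corollary, combined with $\alpha \in S_\gamma^{\hyp}$ being non-torsion, precludes $\tilde{\alpha}$ from being a root of unity; Kronecker's theorem then yields $s + 2t \geq 1$.

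The core step is (d). For each archimedean place $v$ of $\fieldk$, fix an extension $\tilde\rho : \fieldk_\bfT \hookrightarrow \C$ of $v$ and expand
\[
\tilde\rho(\tilde{\alpha}) = \prod_{\tau \in \Gal(\fieldk_\bfT/\fieldk')} (\tilde\rho\tau)(\alpha).
\]
Each factor is an eigenvalue of $\Ad(\sigma(\gamma))$ for an embedding $\sigma$ of $\fieldk$ extending $v$, so if $\bfG(\fieldk_v)$ is compact every factor has absolute value $1$ and hence $|\tilde\rho(\tilde{\alpha})| = 1$. Only the $r_1 + r_2$ non-compact archimedean places of $\fieldk$ therefore contribute. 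For a non-compact real place, the two extensions $\rho_1,\rho_2$ of $v$ to $\fieldk'$ satisfy $\rho_2(\tilde{\alpha}) = \rho_1(\tilde{\alpha})^{-1}$; if both are real they yield the reciprocal pair $(\rho_1(\tilde{\alpha}), \rho_1(\tilde{\alpha})^{-1})$, giving at most one real conjugate of modulus $>1$, while if they are non-real complex conjugates the relation $\overline{\rho_1(\tilde{\alpha})} = \rho_1(\tilde{\alpha})^{-1}$ forces $|\rho_1(\tilde{\alpha})| = 1$ with no contribution. A complex place of $\fieldk$ gives four $\fieldk'$-embeddings splitting into two complex-conjugate pairs of reciprocals, contributing at most two non-real conjugates of modulus $>1$. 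Aggregating across places and descending from $\fieldk'$ to $\Q(\tilde{\alpha})$ yields $s \leq r_1$, $t \leq r_2$, and $s + 2t \leq r_1 + 2r_2$.

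When $\bfG$ is $\R$-simple, $(r_1, r_2) = (1, 0)$, and the bounds collapse to $(s, t) = (1, 0)$: exactly one real Galois conjugate of $\tilde{\alpha}$ lies outside the unit circle, paired with its reciprocal inside, and all other conjugates lie on the unit circle, which is the defining data of a Salem number after passing to the conjugate of absolute value greater than $1$. The main obstacle is the bookkeeping in step (d): one must track carefully how the real/complex split of the signature of $\tilde{\alpha}$ is inherited from the real/complex split of the archimedean places of $\fieldk$ through the quadratic extension $\fieldk'/\fieldk$, especially in the case $\Q(\tilde{\alpha}) \subsetneq \fieldk'$.
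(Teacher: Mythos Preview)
Your argument is correct and in fact more explicit than the paper's. The paper proceeds via the trace element \(\eta \defeq \tilde\alpha + \tilde\alpha^{-1}\): since \(\Gal(\fieldk_\bfT/\fieldk)\) sends \(\tilde\alpha\) to \(\tilde\alpha^{\pm 1}\), one has \(\eta \in \fieldk\), and for any real embedding \(\tau\colon \fieldk \hookrightarrow \R\) with \(\bfG(\fieldk_\tau)\) compact the two roots of \(X^2 - \tau(\eta)X + 1\) lie on the unit circle, i.e.\ \(\tau(\eta) \in [-2,2]\). The signature bounds then fall out immediately from the count of non-compact archimedean places of \(\fieldk\). Your place-by-place analysis of the quadratic extension \(\fieldk'/\fieldk\) reaches the same conclusion by a more hands-on route, and your items (a)--(c) make explicit what the paper takes for granted. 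The \(\eta\)-trick buys brevity by working entirely in \(\fieldk\) rather than in \(\fieldk'\); your approach buys transparency, since it shows directly why the real/complex split of the signature of \(\tilde\alpha\) is inherited from that of \(\fieldk\).

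One minor point: your assertion \(s \leq r_1\) is stronger than what the corollary claims (only \(t \leq r_2\) and \(1 \leq s+2t \leq r_1 + 2r_2\) are needed) and is not fully justified by your case split, since a complex place of \(\fieldk\) could in principle yield \(\rho_1(\tilde\alpha) \in \R\). In that event \(\bar\rho_1(\tilde\alpha) = \rho_1(\tilde\alpha)\), and the four \(\fieldk'\)-embeddings over that place collapse, upon restriction to \(\Q(\tilde\alpha)\), to a single real conjugate of modulus \(>1\); the bound \(s + 2t \leq r_1 + 2r_2\) survives, but \(s \leq r_1\) need not. This is harmless for the corollary as stated.
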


\begin{proof}
Consider \(\eta\defeq \tilde{\alpha}+\tilde{\alpha}^{-1}\), where \(\tilde{\alpha}\) is defined by \eqref{eq:tilde-alpha}. The group \(\scrG_{\fieldk}\) fixes \(\eta\) so it is an element of \(\fieldk\). For each real embedding \(\tau:\fieldk\into \R\) such that \(\presup{\tau}{\bfG}(\R)\) is compact, we have \(\tau(\eta)\in[-2,2]\). Since \(\tilde{\alpha}\) is not torsion we have at least one embedding \(\tau\colon \fieldk\into \C\) (it may be that \(\tau:\fieldk\into\R\)) such that \(\tau(\eta)\notin[-2,2]\). We conclude that \(\tilde{\alpha}\) is an algebraic unit of signature \((s,t)\), where
\[
0\leq t\leq r_{2}
\qand
1\leq s+2t\leq r_{1}+2r_{2}.
\]
In particular, in case \(\bfG\) is \(\R\)-simple, \((r_{1},r_{2})=(1,0)\), so \(\tilde{\alpha}\) is an algebraic unit of signature \((1,0)\), in other words, a Salem number.
%We have \(\sigma(\ulalpha)=\ulalpha^{-1}\); any other conjugate of \(\ulalpha\) must lie on the unit circle, so \(\ulalpha\) is a Salem number.
\end{proof}

\begin{proof}[Proof of \cref{theorem:A} (resp. \cref{theorem:B})]
Assume that the \hyperref[Salem-conjecture]{Salem conjecture} (resp. the \hyperref[Lehmer-conjecture-signature]{Lehmer conjecture of signature \((r_{1},r_{2})\)}) holds with lower bound \(\epsilon>0\), and let \(\fieldk\), \(\bfG\), \(\Gamma\) be as in \cref{section:3}, with the additional asusmption that \(\Gamma\) is cocompact.

Since \hyperref[Margulis-conjecture']{Margulis' conjecture} is known whenever the degree of the trace field is fixed, we can always assume that \([\fieldk:\Q]\) is large enough.
By \cref{proposition:3.1}, we may choose \([\fieldk:\Q]\) large enough so that for any \(\gamma\in\Gamma\) of infinite order, any two infinite order distinct eigenvalues of \(\Ad\gamma\) are multiplicatively dependent. Denote by \(\alpha\) one such infinite order eigenvalue. By \cref{4.7}, \(\tilde{\alpha}=\prod_{\sigma} \sigma(\alpha)\) where the product is over \(\sigma\in\Gal(\fieldk_{\bfT}/\fieldk(\ulalpha))\), is a Salem number (resp. an algebraic unit of signature \((s,t)\) where \(0\leq t\leq r_{2}\) and \(1\leq s+2t\leq r_{1}+2r_{2}\)). In addition, \(\tilde{\alpha}^{P}=\alpha^{P[\fieldk_{\bfT}:\fieldk]/2}\neq 1\), since \(\alpha\in S_{\gamma}^{\hyp}\), and
\[
\epsilon<
\mahler(\tilde{\alpha})\leq \tfrac{1}{2}[\fieldk_{\bfT}:\fieldk]\cdot \mahler(\gamma)\leq \tfrac{1}{2}(\dim \bfG- \rk \bfG)!\cdot \mahler(\gamma)%\leq \frac{1}{2}d_{\bfT}\epsilon.
\]
Thus, the set
\[
U=\Set{\gamma\in\Gamma \st \mahler(\gamma)<2\epsilon/(\dim \bfG- \rk \bfG)!}
\]
is an open neighborhood of the identity in \(\Gamma\) such that \(\Gamma\cap U\) consists of elements of finite order.
\end{proof}

\begin{remark}
The proof actually shows, under the assumptions of \cref{theorem:A}, that the eigenvalues of elements \(\Ad\gamma\) with short translation lengths are \textit{virtually} Salem numbers, in the sense that some large enough power is either a Salem number or 1. Indeed, \(\ulalpha\) as defined in \eqref{eq:ulalpha} can also be shown to be a Salem number -- the same proof as that of \cref{4.7} works verbatim.
\end{remark}

\section{Structure of the Bottom of the Length Spectrum}
\label{section:5}

In this last section, we prove a more precise form of \cref{theorem:C} and \cref{theorem:D}. Recall the formula \eqref{eq:length} for the length \(\ell(\gamma)\) of a closed geodesic corresponding to a semisimple element \(\gamma\).

\begin{theorem}
Let \(G\) be a semisimple Lie group without compact factors. There exist constants \(C_{1},C_{2}\), dependent only on \(G\), with the following property. Let \(\Gamma\subset G\) be an arithmetic lattice with trace field \(\fieldk\) and let \(\gamma_{1},\gamma_{2}\) be two semisimple elements of \(\Gamma\) such that \(\ell(\gamma_{1})^{2}\) and \(\ell(\gamma_{2})^{2}\) are linearly independent over \(\Q\). Then
\[
\ell(\gamma_{1})+\ell(\gamma_{2})\geq 2C_{1}^{1/2}\cdot \sqIndex{\fieldk}{\Q}^{1/2}\big(\log \sqIndex{\fieldk}{\Q}\big)^{-C_{2}/2}.
\]
In particular, the translation lengths of \(\gamma_{1}\) and \(\gamma_{2}\) cannot be simultaneously small.
\end{theorem}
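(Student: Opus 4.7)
I prove the contrapositive: if $\ell(\gamma_1)+\ell(\gamma_2)$ falls below the stated lower bound, then $\ell(\gamma_1)^2$ and $\ell(\gamma_2)^2$ must be $\Q$-linearly dependent. The proof combines a structural reduction (producing a multiplicatively independent pair of eigenvalues drawn from $\gamma_1$ and $\gamma_2$) with \cref{theorem:Amoroso-David}.

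\textbf{Structural step.} Assume $\ell(\gamma_1)^2,\ell(\gamma_2)^2$ are $\Q$-linearly independent; I claim there exist $\alpha_i=\chi_i(\gamma_i)\in S_{\gamma_i}^{\hyp}$ for $i=1,2$ that are multiplicatively independent. Otherwise, fix a reference $\alpha\in S_{\gamma_1}^{\hyp}$; every infinite-order eigenvalue $\beta$ of $\gamma_1$ or $\gamma_2$ satisfies $\beta^q=\alpha^p$ for some nonzero integers $p,q$ (up to a root of unity, irrelevant for absolute values). This identity is preserved by every $\sigma\in\scrG_\fieldk$, so $\log|\beta|_\sigma/\log|\alpha|_\sigma=p/q$ is the \emph{same} rational number at every archimedean place $\sigma\in V_\fieldk$. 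Writing $L_\sigma\defeq\log|\alpha|_\sigma$ and $r_{i,\chi}\in\Q$ for this ratio (with $r_{i,\chi}=0$ for torsion eigenvalues, which contribute $0$ to \eqref{eq:length}), the length formula factorizes as
\[
\ell(\gamma_i)^2=\Big(\sum_{\chi}r_{i,\chi}^2\Big)\Big(\sum_{\sigma\in V_\fieldk}L_\sigma^2\Big)=R_i\cdot A,
\]
with $R_i\in\Q_{\geq 0}$. Hence $\ell(\gamma_1)^2/\ell(\gamma_2)^2=R_1/R_2\in\Q$, a contradiction.

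\textbf{Amoroso-David input.} With $(\alpha_1,\alpha_2)$ multiplicatively independent and $\alpha_i\in\fieldk_{\bfT_i}$ where $[\fieldk_{\bfT_i}:\fieldk]\leq\beta(\dim\bfG)$, the degree $D=[\Q(\alpha_1,\alpha_2):\Q]\leq C\cdot[\fieldk:\Q]$ for some constant $C$ depending only on $G$. Applying \cref{theorem:Amoroso-David} with $n=2$, then multiplying by $[\Q(\alpha_1):\Q]\cdot[\Q(\alpha_2):\Q]$ and invoking the degree lower bound $[\Q(\alpha_i):\Q]\geq C_0[\fieldk:\Q]$ of \cref{lemma:DegLB} exactly as in the proof of \cref{proposition:3.4}, I obtain
\[
\mahler(\alpha_1)\mahler(\alpha_2)\geq C_1\cdot[\fieldk:\Q]\cdot(\log[\fieldk:\Q])^{-C_2}.
\]

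\textbf{Mahler-to-length and main obstacle.} I now bound $\mahler(\alpha_i)\leq C''\ell(\gamma_i)$ with $C''$ depending only on $G$. The key observation is that the number $N_+(\alpha_i)=\card\{\tau:\Q(\alpha_i)\to\C\mid|\tau\alpha_i|>1\}$ of hyperbolic archimedean conjugates of $\alpha_i$ is uniformly bounded in $[\fieldk:\Q]$: by the compactness argument in the proof of \cref{lemma:DegLB}, embeddings lying above archimedean places $\sigma$ at which $\bfG(\fieldk_\sigma)$ is compact force $|\tau\alpha_i|=1$, so $N_+(\alpha_i)\leq d_{\bfT_i}|V_{nc}|$, a quantity depending only on $G$. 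Cauchy-Schwarz over the $N_+$ positive terms of the Mahler sum, combined with the per-eigenvalue bound $\sum_\tau(\log|\tau\alpha_i|)^2\leq[\Q(\alpha_i):\Q]/[\fieldk:\Q]\cdot\ell(\gamma_i)^2$ (from isolating the $\alpha_i$-contribution in \eqref{eq:length}), yields
\[
\mahler(\alpha_i)^2\leq N_+(\alpha_i)\sum_\tau(\log^+|\tau\alpha_i|)^2\leq \frac{N_+(\alpha_i)\,[\Q(\alpha_i):\Q]}{[\fieldk:\Q]}\ell(\gamma_i)^2\leq (C'')^2\ell(\gamma_i)^2.
\]
Substituting into the previous display, $(C'')^2\ell(\gamma_1)\ell(\gamma_2)\geq C_1\cdot[\fieldk:\Q](\log[\fieldk:\Q])^{-C_2}$, and AM-GM produces the stated bound. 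The main obstacle is precisely this final conversion: without the uniform bound on $N_+(\alpha_i)$ (which crucially uses the archimedean compactness structure of $G$), a naive Cauchy-Schwarz would introduce an extra $[\fieldk:\Q]^{1/2}$-factor, and the improvement by $[\fieldk:\Q]^{1/2}$ claimed in the theorem would be lost.
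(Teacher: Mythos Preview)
Your proof is correct and follows essentially the same route as the paper: both arguments (i) show by contradiction that a multiplicatively independent pair $\alpha_1=\chi_1(\gamma_1),\alpha_2=\chi_2(\gamma_2)$ must exist, via the factorisation $\ell(\gamma_i)^2=R_i\cdot A$ with $R_i\in\Q$, (ii) feed this pair into \cref{theorem:Amoroso-David} together with \cref{lemma:DegLB} exactly as in \cref{proposition:3.4}, and (iii) finish with $\mahler(\alpha_i)\ll\ell(\gamma_i)$ and AM--GM. Your final ``Mahler-to-length'' paragraph spells out the Cauchy--Schwarz step that the paper compresses into the single assertion $\ell(\gamma_i)\gg\mahler(\gamma_i)$; the key point you isolate---that the number $N_+(\alpha_i)$ of hyperbolic conjugates is bounded in terms of $\dim G$ alone, by the compactness argument of \cref{lemma:DegLB}---is indeed what makes that inequality uniform in $[\fieldk:\Q]$.
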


\begin{proof}
We adopt the notation from \cref{section:2}. Let \(\bfT_{1},\bfT_{2}\) be maximal tori of \(\bfG\), defined over \(\fieldk\), containing \(\lambda_{1},\lambda_{2}\) respectively.
We first claim that there are roots \(\lambda_{1}\in\Phi(\bfG,\bfT_{1})\) and \(\lambda_{2}\in\Phi(\bfG,\bfT_{2})\), such that \(\lambda_{1}(\gamma_{1})\) and \(\lambda_{2}(\gamma_{2})\) are multiplicatively independent. Indeed, if not, there is a real number \(\alpha_{\sigma}\) depending on \(\sigma\), and rational numbers \(q_{i,\lambda}\) such that \(\log |\lambda(\gamma_{i})|_{\sigma}= q_{i,\lambda}\cdot\alpha_{\sigma}\) for any embedding \(\sigma:\fieldk\into\C\) and any \(\lambda\in\Phi(\bfG,\bfT_{i})\) (\(i=1,2\)). Then, we have
\[
\ell(\gamma_{i})^{2}=\sum_{\sigma\in V_{\fieldk}} \sum_{\lambda\in\Phi(\bfG,\bfT_{i})} q_{i,\lambda}^{2}\cdot\alpha_{\sigma}^{2}=\left(\sum_{\lambda\in \Phi(\bfG,\bfT_{i})}q_{i,\lambda}^{2}\right)\left(\sum_{\sigma\in V_{\fieldk}}\alpha_{\sigma}^{2}\right).
\]
It is then clear that \(\ell(\gamma_{1})^{2}\) and \(\ell(\gamma_{2})^{2}\) are linearly dependent over \(\Q\), a contradiction, so the claim follows.

Both \(\lambda_{1}(\gamma_{1})\) and \(\lambda_{2}(\gamma_{2})\) are algebraic integers of degree at most \(\dim G\) over \(\fieldk\), so an application of \cref{theorem:Amoroso-David} and \cref{lemma:DegLB} yields
\[
\mahler\big(\lambda_{1}(\gamma_{1})\big)\cdot\mahler\big(\lambda_{2}(\gamma_{2})\big)\geq C_{1}\cdot \sqIndex{\fieldk}{\Q}\cdot\big(\log \sqIndex{\fieldk}{\Q}\big)^{-C_{2}}.
\]
The theorem follows by the AM--GM inequality together with the inequalities \(\mahler(\gamma_{i})\geq \mahler(\lambda_{i}(\gamma_{i}))\) and the Cauchy-Schwarz inequality \(\ell(\gamma_{i})\gg \mahler(\gamma_{i})\).
\end{proof}

With a similar method, we can prove that the semisimple elements generating higher rank tori admit a uniform lower bound on their translation length (\cref{theorem:D}).

\begin{theorem}
\label{theorem:5.2}
Assume that \(G\) is a noncompact simple (real or complex) Lie group. Let \(\gamma\) be a semisimple element of an arithmetic lattice \(\Gamma\subset G\) with trace field \(\fieldk\). let \(S\) be the connected component of the Zariski closure of the subgroup generated by \(\gamma\). If \(\rk S\geq 2\) then
\[
\ell(\gamma)\geq C_{1}^{1/2}\cdot \sqIndex{\fieldk}{\Q}^{1/2}\big(\log \sqIndex{\fieldk}{\Q}\big)^{-C_{2}/2}.
\]
\end{theorem}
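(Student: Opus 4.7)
The plan is to mimic the preceding theorem: exploit the hypothesis $\rk S\geq 2$ to produce two distinct roots $\chi_1,\chi_2\in\Phi(\bfG,\bfT)$, for a maximal $\fieldk$-torus $\bfT$ containing $\gamma$, whose values $\chi_1(\gamma),\chi_2(\gamma)$ are multiplicatively independent algebraic units of $\fieldk_{\gamma}$, then feed this pair into \cref{proposition:3.4} and convert the resulting Mahler bound into a length bound exactly as above.

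First I would pick a maximal $\fieldk$-torus $\bfT\subset\bfG$ containing $\gamma$; since $\gamma$ is semisimple, its Zariski closure $H=\overline{\langle\gamma\rangle}^{\mathrm{Zar}}$ is diagonalizable, is contained in $\bfT$, and has $S$ as its identity component. To find two roots whose values at $\gamma$ are multiplicatively independent, I would first secure a pair of roots whose restrictions to $S$ are $\Q$-linearly independent in $\Characters(S)\otimes\Q$. The restriction map $r\colon\Characters(\bfT)\onto\Characters(S)$ is surjective (dual to the exact sequence $1\to S\to\bfT\to\bfT/S\to 1$), and $\Z\Phi$ has finite index in $\Characters(\bfT)$ (as recalled in \cref{section:2}, using that $\bfG$ is simply connected and simple). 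Hence $r(\Z\Phi)$ has finite index in $\Characters(S)$, in particular rank $\geq \rk S\geq 2$, furnishing the desired $\chi_1,\chi_2\in\Phi$.

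Next I would transfer linear independence over $S$ to multiplicative independence at $\gamma$. The subtle point is that $\gamma$ itself need not lie in $S$, only in $H$. Setting $N=[H:S]<\infty$, one has $\gamma^N\in S$, and a coset-dimension argument (if $\overline{\langle\gamma^N\rangle}$ were a proper closed subgroup of $S$, then $H=\bigcup_{i=0}^{N-1}\gamma^{i}\overline{\langle\gamma^N\rangle}$ would have dimension strictly less than $\dim S=\dim H$) shows $\overline{\langle\gamma^N\rangle}=S$. If now $\chi_1(\gamma)^{a}=\chi_2(\gamma)^{b}$ for some integers $(a,b)\neq(0,0)$, then the character $\psi=\chi_1^{a}\chi_2^{-b}\in\Characters(\bfT)$ satisfies $\psi(\gamma^N)=\psi(\gamma)^N=1$; by Zariski density of $\langle\gamma^N\rangle$ in $S$, this forces $\psi|_{S}=1$, contradicting the linear independence of $r(\chi_1),r(\chi_2)$. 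Hence $\chi_1(\gamma),\chi_2(\gamma)$ are multiplicatively independent, and \cref{proposition:3.4} combined with $\mahler(\gamma)\geq\mahler(\chi_i(\gamma))$ yields $\mahler(\gamma)^{2}\geq C_1\sqIndex{\fieldk}{\Q}\bigl(\log\sqIndex{\fieldk}{\Q}\bigr)^{-C_2}$; the theorem then follows from the Cauchy--Schwarz inequality $\ell(\gamma)\gg\mahler(\gamma)$ already invoked in the previous proof. I expect no substantive obstacle beyond the descent $\gamma\rightsquigarrow\gamma^N$ just described; every other ingredient is either standard torus theory or has been put in place in \cref{section:2} and \cref{section:3}.
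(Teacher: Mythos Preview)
Your proof is correct and follows essentially the same approach as the paper. The paper organizes the argument as a contrapositive---assuming all pairs $\lambda(\gamma),\lambda'(\gamma)$ are multiplicatively dependent and deducing $\rk\bfS=1$ from the Zariski density of $\langle\gamma\rangle$ in $H\supseteq\bfS$---whereas you argue directly from $\rk S\geq 2$ to produce two roots with multiplicatively independent values; the rest (Amoroso--David via \cref{proposition:3.4}, then $\ell(\gamma)\gg\mahler(\gamma)$) is identical. Your detour through $\gamma^{N}$ is a bit more than needed: since $\psi(\gamma)=1$ already forces $\psi\equiv 1$ on $\overline{\langle\gamma\rangle}=H\supseteq S$, you can skip the passage to $\gamma^{N}$ entirely.
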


\begin{proof}
As in the previous proof, we adopt the notation from \cref{section:2}. Let \(\bfT\) be a maximal torus of \(\bfG\) defined over \(\fieldk\), such that \(\gamma\in \bfT(\fieldk)\) and let \(\bfS\) be the connected component of the Zariski closure of the group generated by \(\gamma\) in \(\bfG\). We fix the embedding \(\fieldk\into \fieldK\), \(\fieldK=\R\) or \(\C\) for which \(\bfG(\fieldK)\) is equal to \(G\). In this way, the rank of \(\bfS\) as a \(\fieldk\)-torus in \(\bfG\) coincides with the absolute rank of \(S\) as a \(\fieldK\)-torus in \(G\). We can consider two cases, either all the numbers \(\lambda(\gamma), \lambda\in \Phi(\bfG,\bfT)\) are multiplicatively dependent, or there is a pair \(\lambda_{1}(\gamma),\lambda_{2}(\gamma)\) which are multiplicatively independent. In the latter case we argue using \cref{theorem:Amoroso-David} and \cref{lemma:DegLB}, just like before, that
\[
\mahler\big(\lambda_{1}(\gamma)\big)\cdot \mahler\big(\lambda_{2}(\gamma)\big)\geq C_{1}\cdot \sqIndex{\fieldk}{\Q}\cdot\big(\log \sqIndex{\fieldk}{\Q}\big)^{-C_{2}}.
\]
In this case, an application of AM-GM inequality and \(\mahler(\gamma)\geq \max\{\mahler(\lambda_{1}(\gamma)),\mahler(\lambda_{2}(\gamma))\}\) and the Cauchy-Schwarz inequality \(\ell(\gamma)\gg \mahler(\gamma)\) finishes the proof.

%Let \(\bfS\) be the connected component of the Zariski closure of \(\< \gamma\>\) in \(\bfG\). 
It remains to show that in the first case, the rank of \(\bfS\) is \(1\). The torus \(\bfT\) contains \(\bfS\) so every character of \(\bfS\) is the restriction of a character of \(\bfT\). Let \(\xi_{1},\ldots,\xi_{d}\in\Characters(\bfT)\) be characters restricting to a basis of \(\Characters(\bfS)\). The roots in \(\Phi(\bfG,\bfT)\) generate a finite index subgroup of \(\Characters(\bfT)\) so there exists \(m_{1},m_{2}\in\Z\setminus\{0\}\) such that
\[
\xi_{i}^{m_{i}}\in \< \Phi(\bfG,\bfT)\> \quad (i=1,2).
\]
Suppose \(d\geq 2\). Since all eigenvalues of \(\Ad\gamma\) are multiplicatively dependent, there exist non-zero integers \(s_{1}, s_{2}\), such that \(\xi_{1}(\gamma)^{s_{1}}\xi_{2}(\gamma)^{s_{2}}=1\). The group generated by \(\gamma\) is Zariski dense in \(\bfS\) so this implies that \(\xi_{1}^{s_{1}}\xi_{2}^{s_{2}}=1\) on \(\bfS\), which is impossible as \(\xi_{1},\xi_{2}\) are basis elements. This implies that \(d=1\), which means that the rank of \(\bfS\) is \(1\).
\end{proof}

\begin{remark}
A closer look at the proof of \cref{theorem:5.2} shows that the statement may be extended to semisimple Lie groups \(G=G_{1}\times\cdots\times G_{r}\), where the \(G_i\)'s are the simple (real or complex) factors. In this case, the rank condition on the torus \(S=S_{1}\times \cdots\times S_{r}\) is to be replaced by \(\rk S_{i}\geq 2\) for some \(i=1,\ldots,r\).
\end{remark}

% \bibliographystyle{alpha}
% \bibliography{biblio.bib} % file biblio.bib

\end{document}